\newtheorem{corollary}{Corollary}[section]
\newtheorem{lemma}[corollary]{Lemma}
\newtheorem{proposition}[corollary]{Proposition}
\newtheorem{theorem}[corollary]{Theorem}
\theoremstyle{definition}
\newtheorem{definition}[corollary]{Definition}
\newtheorem{remark}[corollary]{Remark}
\newtheorem*{acknowledgements}{\sc Acknowledgements}
\newtheorem*{declaration}{\sc Declarations of interest}
\numberwithin{equation}{section}
\newcommand{\norm}[1]{\lVert #1 \rVert}
\newcommand{\spr}[2]{\langle #1, #2\rangle}
\newcommand{\xstararrow}[2]{\xrightharpoonup[#1]{#2\ \hspace{-0.02cm}_{*}\hspace{-0.2cm}}}
\def\Xint#1{\mathchoice
{\XXint\displaystyle\textstyle{#1}}%
{\XXint\textstyle\scriptstyle{#1}}%
{\XXint\scriptstyle\scriptscriptstyle{#1}}%
{\XXint\scriptscriptstyle\scriptscriptstyle{#1}}%
\!\int}
\def\XXint#1#2#3{{\setbox0=\hbox{$#1{#2#3}{\int}$ }
\vcenter{\hbox{$#2#3$ }}\kern-.6\wd0}}
\def\dashint{\Xint-}
\def\mint{\Xint{\rotatebox[origin][30]{$-$}}}
\def \Lip {\mathop {\rm Lip}\nolimits}
\def \div {\mathop {\rm div}\nolimits}
\def \de {\mathrm d}
\def \e {\mathrm e}
\def \R {\mathbb R}
\def \F {\mathbb F}
\def \G {\mathbb G}
\def \B {\mathbb B}
\def \C {\mathbb C}
\def \N {\mathbb N}
\def \K {\mathbb K}
\def \Cc {\mathcal C_c^1}
\def \ue {u^{\epsilon}}
\def \du {\dot{u}^{\epsilon}}
\begin{document}

\title[An existence result for the fractional Kelvin-Voigt's model]{An existence result for the fractional Kelvin-Voigt's model on time-dependent cracked domains}
\author[M. Caponi]{Maicol Caponi}
\address[Maicol Caponi]{Fakult\"at Mathematik, TU Dresden, 01062 Dresden, Germany}
\email{maicol.caponi@tu-dresden.de}
\author[F. Sapio]{Francesco Sapio}
\address[Francesco Sapio]{SISSA, via Bonomea 265, 34136 Trieste, Italy}
\email{fsapio@sissa.it}
\thanks{Preprint SISSA 27/2020/MATE}

\begin{abstract}
We prove an existence result for the fractional Kelvin-Voigt's model involving Caputo's derivative on time-dependent cracked domains. We first show the existence of a solution to a regularized version of this problem. Then, we use a compactness argument to derive that the fractional Kelvin-Voigt's model admits a solution which satisfies an energy-dissipation inequality. Finally, we prove that when the crack is not moving, the solution is unique.
\end{abstract}

\maketitle

\noindent
{\bf Keywords}: linear second order hyperbolic systems, dynamic fracture mechanics, cracking domains, elastodynamics, viscoelasticity, fractional Kelvin-Voigt, Caputo's fractional derivative.
\medskip

\noindent
{\bf MSC 2010}: 35L53, 35R11, 35A01, 35Q74, 74H20, 74R10.


\section{Introduction}
This paper deals with the mathematical analysis of the dynamics of elastic damping materials in the presence of external forces and time-dependent brittle fracture. In this framework, it is important to find the behavior of the deformation when the crack evolution is known. This is the first step towards the development of a complete model of dynamic crack growth in viscoelastic materials. From a mathematical point of view, this means solving the following dynamic system
\begin{equation}\label{eq:elasticity}
\ddot u(t)-\div(\sigma(t))=f(t)\quad\text{in $\Omega\setminus\Gamma_t$, $t\in(0,T)$}.
\end{equation}
In the equation above, $\Omega\subset\R^d$ represents the reference configuration of the material, the set $\Gamma_t\subset\Omega$ models the crack at time $t$ (which is prescribed), $u(t)\colon \Omega\setminus\Gamma_t\to\R^d$ is the displacement of the deformation, $\sigma(t)$ the stress tensor, and $f(t)$ is the forcing term. 

In the classical theory of linear viscoelasticity, the constitutive stress-strain relation of the so called Kelvin-Voigt's model is given by
\begin{equation}\label{eq:local}
    \sigma(t)=\C eu(t)+\B e\dot u(t)\quad\text{in $\Omega\setminus\Gamma_t$, $t\in(0,T)$},
\end{equation}
where $\C$ and $\B$ are two positive tensors acting on the space of symmetric matrices, and $ev$ denotes the symmetric part of the gradient of a function $v$ (which is defined as $ev:=\frac{1}{2}(\nabla v+\nabla v^T)$). The local model associated to~\eqref{eq:local} has already been widely studied and we can find several existence results in the literature; we refer to~\cite{C2,Sap-ca,DM-Lar,DM-Luc,NS,T1} for existence and uniqueness results in the pure elastodynamics case ($\B=0$) and in the classic Kelvin-Voigt's one.

In recent years, materials whose constitutive equations can be described by non-local models are of increasing interest. In this context, by non-local we mean that the state of the stress at instant $t$ depends not only on that instant, but also on the previous ones (long memory). For solid viscoelastic materials, some experiments are particularly in agreement with models using fractional derivative, see for example~\cite{Dra,Fra,Sch-Met,Zhu} and the reference therein. 

In this paper, we focus on the {\it fractional} Kelvin-Voigt's model, i.e. we consider the following constitutive stress-strain relation
\begin{equation*}
\sigma(t)=\C e u(t)+\B D_t^\alpha e u(t)\quad\text{in $\Omega\setminus\Gamma_t$, $t\in(0,T)$},
\end{equation*}
where $D_t^\alpha$ denotes a fractional derivative of order $\alpha\in(0,1)$. In the literature we can find several definitions for the fractional derivative of a function $g\colon (a,b)\to \R$; here we focus on the most used ones which are {\it Riemann-Liouville's derivative} of order $\alpha$ at starting point $a$
\begin{equation*}
\prescript{RL}{a}{D}_t^\alpha g(t):=\frac{1}{{\bf \Gamma}(1-\alpha)}\frac{\de}{\de t}\int_a^t\frac{g(r)}{(t- r)^\alpha}\,\de r,
\end{equation*}
and {\it Caputo's derivative} of order $\alpha$ at starting point $a$
\begin{equation*}
\prescript{C}{a}{D}_t^\alpha g(t):=\frac{1}{{\bf \Gamma}(1-\alpha)}\int_a^t\frac{\dot g(r)}{(t- r)^\alpha}\,\de r.
\end{equation*}
We recall that $\mathbf{\Gamma}$ denotes Euler's Gamma function;
notice that in order to define Caputo's derivative the function $g$ must be differentiable, while this is not necessary for Riemann-Liouville's derivative. Given $g\in AC([a,b])$, and $t\in(a,b)$ we have the following relation between Riemann-Liouville's and Caputo's derivative (see, e.g.,~\cite{Kilbas}):
\begin{equation}\label{eq:Cap-Riem}
\prescript{RL}{a}{D}_t^\alpha g(t)=\prescript{C}{a}{D}_t^\alpha g(t)+\frac{1}{{\bf \Gamma}(1-\alpha)}\frac{g(a)}{t^\alpha}.
\end{equation}
In particular, when $g(a)=0$, these two notions coincide. For more properties regarding these two fractional derivatives, we refer for example to~\cite{Car-Co,Mai,Po,SaKiMa} and the references therein.

In this paper we use Caputo's derivative, which means we consider the dynamic system
\begin{equation}\label{eq:elasticity2}
\ddot u(t)-\div\left(\C e u(t)+ \B\prescript{C}{0}{D}_t^\alpha e u(t)\right)=f(t)\quad\text{in $\Omega\setminus\Gamma_t$, $t\in(0,T)$}.
\end{equation}
One of the quality of this definition for the fractional derivative is that the initial conditions can be imposed in the classical sense, see for example~\cite{Mai,Po}. The choice of 0 as a starting point is due to the fact that we want to couple dynamic system~\eqref{eq:elasticity} with the initial conditions at time $t=0$.

Dealing with~\eqref{eq:elasticity2} is very difficult, since in the definition of $\prescript{C}{0}{D}_t^\alpha e u(t)$ we need that $eu$ is differentiable, which is a very strong request. Hence, we rephrase Caputo’s derivative in a more suitable way. Thanks to~\eqref{eq:Cap-Riem} for $g\in AC([0,T])$ we can write
\begin{equation}\label{eq:weakCaputo}
\prescript{C}{0}{D}_t^\alpha g(t)=\frac{1}{{\bf \Gamma}(1-\alpha)}\frac{\de}{\de t}\int_0^t\frac{1}{(t-r)^\alpha}(g(r)-g(0))\,\de r.
\end{equation}
This formulation of Caputo's derivative is well-posed in the distributional sense also when the function $g$ is only integrable. We point out that formula~\eqref{eq:weakCaputo} can be found in the recent literature on fractional derivatives, where it is used to define the notion of weak Caputo's derivative for less regular functions, see for example~\cite{Val-DiP-Ve,Li-Liu}. 

Thanks to formula \eqref{eq:weakCaputo}, we can write system~\eqref{eq:elasticity2} in a weaker form (see Definition \ref{gen-sol}) as
\begin{equation}\label{eq:elasticity3}
\ddot u(t)-\div\left(\C eu(t)+\frac{\de}{\de t}\int_0^t\F(t-r)(eu(r)-eu(0))\,\de r \right)=f(t)\quad\text{in $\Omega\setminus\Gamma_t$, $t\in(0,T)$},
\end{equation}
where
\begin{equation}\label{eq:Fdef}
    \F(t):=\rho(t)\B,\quad \rho(t):=\frac{1}{{\bf \Gamma}(1-\alpha)}\frac{1}{t^\alpha}\quad\text{for $t\in(0,\infty)$}.
\end{equation}
Notice that the scalar function $\rho$ appearing in $\F$ is positive, decreasing, and convex on $(0,\infty)$. Moreover, $\rho\in L^1(0,T)$ for every $T>0$, but it is not bounded on $(0,T)$. In particular, we can not compute the derivative in front of the convolution integral in~\eqref{eq:elasticity3}. 

When there is no crack, existence results for this kind of system can be found for example in~\cite{At-Opa,CaCaVa,Opa,Opa-Su}. However, in the case of a dynamic fracture, the techniques used in the previous papers can not be applied and up to now there are no existence results in this setting. 

To prove the existence of a solution to~\eqref{eq:elasticity3} we proceed into two steps. First we consider a regularized version of~\eqref{eq:elasticity3}, where we replace the kernel $\F$ in~\eqref{eq:elasticity3} by a regular kernel $\G\in C^2([0,T])$. Then we prove the existence of a solution to the more regular system
\begin{equation}\label{eq:elasticity4}
\ddot u(t)-\div\left(\C eu(t)+\frac{\de}{\de t}\int_0^t\G(t-r)(eu(r)-eu(0))\,\de r \right)=f(t)\quad\text{in $\Omega\setminus\Gamma_t$, $t\in(0,T)$},
\end{equation}
and we show that this solution satisfies a uniform bound depending on the $L^1$-norm of $\G$. Finally, we consider a sequence of regular tensors $\G^\epsilon$ converging to $\F$ in $L^1$ and we take the solutions to~\eqref{eq:elasticity4} with $\G:=\G^\epsilon$. By a compactness argument, we show that the sequence $u^\epsilon$ converge to a function $u^*$ which solves~\eqref{eq:elasticity3}. Moreover, we prove that this solution satisfies an energy-dissipation inequality. We conclude this paper by showing that, when the crack is not moving, the fractional Kelvin-Voigt's system~\eqref{eq:elasticity3} admits a unique solution.

The paper is organized as follows: in Section~\ref{sec:not} we fix the notation and the framework of our problem. Moreover, we give the notion of solution to the fractional Kelvin-Voigt's system involving Caputo's derivative~\eqref{eq:elasticity3} and we state our main existence result (see Theorem \ref{thm:irr_exis}). Section~\ref{sec:reg} deals with the regularized system~\eqref{eq:elasticity4}. First, by a time-discretization procedure in Theorem~\ref{thm:reg_exis} we prove the existence of a solution to~\eqref{eq:elasticity4}. Then, in Lemma~\ref{lem:irr_est} we derive the uniform energy estimate which depends on the $L^1$-norm of $\G$. In Section~\ref{sec:irr} we consider Kelvin-Voigt's system~\eqref{eq:elasticity3}: we prove the existence of a generalized solution to system~\eqref{eq:elasticity3} and in Theorem~\ref{thm:irr_enin} we show that such a solution satisfies an energy-dissipation inequality. Finally, in Section~\ref{sec:uniq} we prove that, for a not moving crack, the solution to~\eqref{eq:elasticity3} is unique.


\section{Notation and framework of the problem}\label{sec:not}

The space of $m\times d$ matrices with real entries is denoted by $\R^{m\times d}$; in case $m=d$, the subspace of symmetric matrices is denoted by $\mathbb R^{d\times d}_{sym}$. Given a function $u\colon\R^d\to\R^m$, we denote its Jacobian matrix by $\nabla u$, whose components are $(\nabla u)_{ij}:= \partial_j u_i$ for $i=1,\dots,m$ and $j=1,\dots,d$; when $u\colon \R^d\to\R^d$, we use $eu$ to denote the symmetric part of the gradient, namely $eu:=\frac{1}{2}(\nabla u+\nabla u^T)$. Given a tensor field $A\colon \R^d\to\R^{m\times d}$, by $\div A$ we mean its divergence with respect to rows, namely $(\div A)_i:= \sum_{j=1}^d\partial_jA_{ij}$ for $i=1,\dots,m$. 

We denote the $d$-dimensional Lebesgue measure by $\mathcal L^d$ and the $(d-1)$-dimensional Hausdorff measure by $\mathcal H^{d-1}$; given a bounded open set $\Omega$ with Lipschitz boundary, by $\nu$ we mean the outer unit normal vector to $\partial\Omega$, which is defined $\mathcal H^{d-1}$-a.e. on the boundary. The Lebesgue and Sobolev spaces on $\Omega$ are defined as usual; the boundary values of a Sobolev function are always intended in the sense of traces. 

The norm of a generic Banach space $X$ is denoted by $\|\cdot\|_X$; when $X$ is a Hilbert space, we use $(\cdot,\cdot)_X$ to denote its scalar product. We denote by $X'$ the dual of $X$ and by $\spr{\cdot}{\cdot}_{X'}$ the duality product between $X'$ and $X$. Given two Banach spaces $X_1$ and $X_2$, the space of linear and continuous maps from $X_1$ to $X_2$ is denoted by $\mathscr L(X_1;X_2)$; given $\mathbb A\in\mathscr L(X_1;X_2)$ and $u\in X_1$, we write $\mathbb A u\in X_2$ to denote the image of $u$ under $\mathbb A$. 

Moreover, given an open interval $(a,b)\subseteq\R$ and $p\in[1,\infty]$, we denote by $L^p(a,b;X)$ the space of $L^p$ functions from $(a,b)$ to $X$; we use $W^{k,p}(a,b;X)$ and $H^k(a,b;X)$ (for $p=2$) to denote the Sobolev space of functions from $(a,b)$ to $X$ with $k$ derivatives. Given $u\in W^{1,p}(a,b;X)$, we denote by $\dot u\in L^p(a,b;X)$ its derivative in the sense of distributions. When dealing with an element $u\in W^{1,p}(a,b;X)$ we always assume $u$ to be the continuous representative of its class; in particular, it makes sense to consider the pointwise value $u(t)$ for every $t\in[a,b]$. We use $C_w^0([a,b];X)$ to denote the set of weakly continuous functions from $[a,b]$ to $X$, namely, the collection of maps $u\colon [a,b]\to X$ such that $t\mapsto \spr{x'}{u(t)}_{X'}$ is continuous from $[a,b]$ to $\R$ for every $x'\in X'$.

Let $T$ be a positive real number and let $\Omega\subset\R^d$ be a bounded open set with Lipschitz boundary. Let $\partial_D\Omega$ be a (possibly empty) Borel subset of $\partial\Omega$ and let $\partial_N\Omega$ be its complement. Throughout the paper we assume the following hypotheses on the geometry of the cracks:
\begin{itemize}
\item[(H1)] $\Gamma\subset\overline\Omega$ is a closed set with $\mathcal L^d(\Gamma)=0$ and $\mathcal H^{d-1}(\Gamma\cap\partial\Omega)=0$;
\item[(H2)] for every $x\in\Gamma$ there exists an open neighborhood $U$ of $x$ in $\R^d$ such that $(U\cap\Omega)\setminus\Gamma$ is the union of two disjoint open sets $U^+$ and $U^-$ with Lipschitz boundary;
\item[(H3)] $\{\Gamma_t\}_{t\in[0,T]}$ is an increasing family in time of closed subsets of $\Gamma$, i.e. $\Gamma_s\subset\Gamma_t$ for every $0\le s\le t\le T$.
\end{itemize}

Thanks (H1)--(H3) the space $L^2(\Omega\setminus\Gamma_t;\R^m)$ coincides with $L^2(\Omega;\R^m)$ for every $t\in[0,T]$ and $m\in\mathbb N$. In particular, we can extend a function $u\in L^2(\Omega\setminus\Gamma_t;\R^m)$ to a function in $L^2(\Omega;\R^m)$ by setting $u=0$ on $\Gamma_t$. To simplify our exposition, for every $m\in\mathbb N$ we define the spaces $H:=L^2(\Omega;\R^m)$, $H_N:=L^2(\partial_N\Omega;\R^m)$ and $H_D:=L^2(\partial_D\Omega;\R^m)$; we always identify the dual of $H$ by $H$ itself, and $L^2((0,T)\times\Omega;\R^m)$ by the space $L^2(0,T;H)$. We define
\begin{equation*}
U_t:= H^1(\Omega\setminus\Gamma_t;\R^d)\quad\text{for every $t\in[0,T]$}.
\end{equation*}
Notice that in the definition of $U_t$ we are considering only the distributional gradient of $u$ in $\Omega\setminus\Gamma_t$ and not the one in $\Omega$. By (H2) we can find a finite number of open sets $U_j\subset\Omega\setminus\Gamma$, $j=1,\dots m$, with Lipschitz boundary, such that $\Omega\setminus\Gamma=\cup_{j=1}^m U_j$. By using second Korn's inequality in each $U_j$ (see, e.g.,~\cite[Theorem~2.4]{OSY}) and taking the sum over $j$ we can find a constant $C_K$, depending only on $\Omega$ and $\Gamma$, such that 
\begin{equation*}
\norm{\nabla u}_H^2\le C_K\left(\norm{u}_H^2+\norm{e u}_H^2\right)\quad\text{for every }u\in H^1(\Omega\setminus\Gamma;\R^d),
\end{equation*}
where $eu$ is the symmetric part of $\nabla u$. Therefore, we can use on the space $U_t$ the equivalent norm
\begin{equation*}
\norm{u}_{U_t}:=(\norm{u}_{H}^2+\norm{e u}_{H}^2)^{\frac{1}{2}}\quad\text{for every }u\in U_t.
\end{equation*}
Furthermore, the trace of $u\in H^1(\Omega\setminus\Gamma;\R^d)$ is well defined on $\partial\Omega$. Indeed, we may find a finite number of open sets with Lipschitz boundary $V_k\subset\Omega\setminus\Gamma$, $k=1,\dots l$, such that $\partial\Omega\setminus(\Gamma\cap\partial\Omega)\subset\cup_{k=1}^l\partial V_k$. Since $\mathcal H^{d-1}(\Gamma\cap\partial\Omega)=0$, there exists a constant $C$, depending only on $\Omega$ and $\Gamma$, such that
\begin{equation*}
\norm{u}_{L^2(\partial\Omega;\R^d)}\le C\norm{u}_{H^1(\Omega\setminus\Gamma;\R^d)}\quad\text{for every }u\in H^1(\Omega\setminus\Gamma;\R^d).
\end{equation*}
Hence, we can consider the set
\begin{equation*}
U_t^D:=\{u\in U_t:u=0\text{ on }\partial_D\Omega\}\quad\text{for every $t\in[0,T]$},
\end{equation*}
which is a closed subspace of $U_t$. Moreover, there exists a positive constant $C_{tr}$ such that 
\begin{equation*}
\norm{u}_{H_N}\leq C_{tr}\norm{u}_{U_T}\quad\text{for every }u\in U_T.
\end{equation*}
Now, we define the following sets of functions
\begin{align*}
&\mathcal C_w:=\{u\in C_w^0([0,T];U_T):\text{$\dot u\in C_w^0([0,T];H)$, $u(t)\in U_t$ for every $t\in[0,T]$}\},\\
&\mathcal C^1_c:=\{\varphi\in C^1_c(0,T;U^D_T):\text{$\varphi(t)\in U^D_t$ for every $t\in[0,T]$}\},
\end{align*}
in which we develop our theory. Moreover, we consider the Banach space
\begin{align*}
B:=L^{\infty}(\Omega;\mathcal{L}_{sym}(\R^{d\times d}_{sym},\R^{d\times d}_{sym})),
\end{align*}
where $\mathcal{L}_{sym}(\R^{d\times d}_{sym},\R^{d\times d}_{sym})$ represents the space of symmetric tensor fields, i.e. the collections of linear and continuous maps $\mathbb A\colon \R^{d\times d}_{sym}\to \R^{d\times d}_{sym}$ satisfying
\begin{equation*}
 \mathbb A\xi\cdot\eta=\mathbb A\eta\cdot\xi\quad\text{for every $\xi,\eta\in\R^{d\times d}_{sym}$}.
\end{equation*}

We assume that the Dirichlet datum $z$, the Neumann datum $N$, the forcing term $f$, the initial displacement $u^0$, and the initial velocity $u^1$ satisfy
\begin{align}
&z\in W^{2,1}(0,T;U_0),\label{eq:dataFz}\\
&N\in W^{1,1}(0,T;H_N),\quad f\in L^2(0,T;H),\label{eq:data2}\\
&u^0\in U_0\text{ with }u^0-z(0)\in U_0^D,\quad u^1\in H.
\end{align}
We consider a coercive tensor $\C\in B$, which means that there exists $\gamma>0$ such that
\begin{align}\label{eq:C}
\C (x)\xi\cdot \xi\ge \gamma|\xi|^2\quad\text{for every $\xi\in \R^d$ and a.e. $x\in\Omega$}.
\end{align}
Moreover, let us take a time-dependent tensor $\F\colon (0,T+\delta_0)\to B$, with $\delta_0>0$, satisfying 
\begin{align}
&\F\in C^2(0,T+\delta_0;B)\cap L^1(0,T+\delta_0;B),\label{eq:F1}\\
&\F(t,x)\xi\cdot \xi\ge 0&&\text{for every $\xi\in \R^d$, $t\in(0,T+\delta_0)$, and a.e. $x\in\Omega$},\label{eq:F2}\\
&\dot\F(t,x)\xi\cdot \xi\le 0 &&\text{for every $\xi\in \R^d$, $t\in(0,T+\delta_0)$, and a.e. $x\in\Omega$},\label{eq:F3}\\
&\ddot\F(t,x)\xi\cdot \xi\ge 0 &&\text{for every $\xi\in \R^d$, $t\in(0,T+\delta_0)$, and a.e. $x\in\Omega$}\label{eq:F4}.
\end{align}

\begin{remark}
The tensor $\F$ may be not defined at $t=0$ and unbounded on $(0,T+\delta_0)$. In the case of~\eqref{eq:Fdef}, the function $\F$ associated to the fractional Kelvin-Voigt's model involving Caputo's derivative, satisfies~\eqref{eq:F1}--\eqref{eq:F4} provided that $\B\in B$ is non-negative, that is
\begin{equation*}
\B (x)\xi\cdot \xi\ge 0\quad\text{for every $\xi\in \R^d$ and a.e. $x\in\Omega$}.
\end{equation*}
Since in our existence result we first regularize the tensor $\F$ by means of translations (see Section \ref{sec:irr}) we need that $\F$ is defined also on the right of $T$. This is not a problem, because our standard example for $\F$, which is~\eqref{eq:Fdef}, is defined on the whole $(0,\infty)$.
\end{remark}

In this paper we want to study the following problem
\begin{equation}\label{eq:Fsystem}
\begin{cases}
\ddot u(t)-\div(\C eu(t))-\div\left(\frac{\de}{\de t}\int_0^t\F(t-r)(eu(r)-eu^0)\,\de r\right)=f(t)&\text{in $\Omega\setminus \Gamma_t$,\hspace{0.2cm} $t\in(0,T)$},\\
u(t)=z(t)&\text{on $\partial_D\Omega$,\hspace{0.35cm} $t\in(0,T)$},\\
\C eu(t)\nu+\left(\frac{\de}{\de t}\int_0^t\F(t-r)(e u(r)-eu^0)\,\de r\right)\nu =N(t)&\text{on $\partial_N\Omega$,\hspace{0.33cm} $t\in(0,T)$},\\
\C eu(t)\nu+\left(\frac{\de}{\de t}\int_0^t\F(t-r)(e u(r)-eu^0)\,\de r\right)\nu =0&\text{on $\Gamma_t$,\hspace{0.68cm} $t\in(0,T)$},\\
u(0)=u^0,\quad\dot u(0)=u^1&\text{in $\Omega\setminus\Gamma_0$}.
\end{cases}
\end{equation}

We give the following notion of solution to system~\eqref{eq:Fsystem}:

\begin{definition}[Generalized solution]\label{gen-sol}
Assume~\eqref{eq:dataFz}--\eqref{eq:F4}. A function $u\in \mathcal C_w$ is a {\it generalized solution} to system~\eqref{eq:Fsystem} if $u(t)-z(t)\in U_t^D$ for every $t\in[0,T]$, $u(0)=u^0$ in $U_0$, $\dot u(0)=u^1$ in $H$, and for every $\varphi\in \Cc $ the following equality holds
\begin{align}\label{eq:Fgen}
&-\int_0^T(\dot u(t),\dot \varphi(t))_H\,\de t+\int_0^T(\C eu(t),e\varphi(t))_H\,\de t-\int_0^T\int_0^t(\F(t-r)(e u(r)-eu^0),e \dot \varphi(t))_{H}\,\de r\,\de t\nonumber\\
&=\int_0^T(f(t),\varphi(t))_H\,\de t+\int_0^T(N(t),\varphi(t))_{H_N}\,\de t.
\end{align}
\end{definition}

\begin{remark}
The Neumann conditions appearing in~\eqref{eq:Fsystem} are only formal; they are used to pass from the strong formulation in~\eqref{eq:Fsystem} to the weak one~\eqref{eq:Fgen}.
\end{remark}

The main existence result of this paper is the following theorem:

\begin{theorem}\label{thm:irr_exis}
Assume~\eqref{eq:dataFz}--\eqref{eq:F4}. Then there exists a generalized solution $u\in \mathcal C_w$ to system~\eqref{eq:Fsystem}.
\end{theorem}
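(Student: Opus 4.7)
The plan is the two-step strategy indicated in the introduction. For $\epsilon\in(0,\delta_0)$ set $\G^\epsilon(t):=\F(t+\epsilon)$ for $t\in[0,T]$; by \eqref{eq:F1} each $\G^\epsilon$ belongs to $C^2([0,T];B)$, so Theorem~\ref{thm:reg_exis} produces $u^\epsilon\in\mathcal C_w$ solving the regularized system \eqref{eq:elasticity4} with kernel $\G^\epsilon$ together with the prescribed boundary and initial data. Since $\F\in L^1(0,T+\delta_0;B)$, $L^1$-continuity of translations gives $\G^\epsilon\to\F$ in $L^1(0,T;B)$, and in particular $\sup_\epsilon\norm{\G^\epsilon}_{L^1(0,T;B)}<\infty$. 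Lemma~\ref{lem:irr_est} then produces a constant $C$, independent of $\epsilon$, such that $\norm{u^\epsilon}_{L^\infty(0,T;U_T)}+\norm{\dot u^\epsilon}_{L^\infty(0,T;H)}\le C$.

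By Banach--Alaoglu, along a (not relabelled) subsequence, $u^\epsilon\overset{\ast}{\rightharpoonup}u^*$ in $L^\infty(0,T;U_T)$ and $\dot u^\epsilon\overset{\ast}{\rightharpoonup}\dot u^*$ in $L^\infty(0,T;H)$ for some $u^*$ with $\dot u^*\in L^\infty(0,T;H)$. Because each $U_t$ and the affine set $z(t)+U_t^D$ are weakly closed and the family $\{U_t^D\}_t$ is monotone in $t$, a pointwise weak-limit argument as in~\cite{DM-Lar,DM-Luc} upgrades this to $u^*\in\mathcal C_w$ with $u^*(t)-z(t)\in U_t^D$ for every $t\in[0,T]$; weak continuity in time then transfers the initial values, giving $u^*(0)=u^0$ and $\dot u^*(0)=u^1$.

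The heart of the proof is the passage to the limit in the memory term of \eqref{eq:Fgen}. Fix $\varphi\in\Cc$ and split the difference between the $\epsilon$-memory and the candidate limit as $A^\epsilon+B^\epsilon$, where $A^\epsilon$ carries the factor $\G^\epsilon-\F$ paired with $eu^\epsilon-eu^0$ and $B^\epsilon$ carries $\F$ paired with $eu^\epsilon-eu^*$. Young's convolution inequality yields
\begin{equation*}
\abs{A^\epsilon}\le 2C\,\norm{\G^\epsilon-\F}_{L^1(0,T;B)}\,\norm{e\dot\varphi}_{L^1(0,T;H)}\longrightarrow 0.
\end{equation*}
For $B^\epsilon$, Fubini's theorem and the pointwise symmetry of $\F(t)$ give
\begin{equation*}
B^\epsilon=\int_0^T(eu^\epsilon(r)-eu^*(r),\psi(r))_H\,\de r,\qquad \psi(r):=\int_r^T\F(t-r)e\dot\varphi(t)\,\de t,
\end{equation*}
and since $\F\in L^1(0,T;B)$ with $e\dot\varphi\in L^\infty(0,T;H)$ compactly supported in $(0,T)$, a second Young's inequality yields $\psi\in L^1(0,T;H)$; the weak-$\ast$ convergence $eu^\epsilon\overset{\ast}{\rightharpoonup}eu^*$ in $L^\infty(0,T;H)$ then forces $B^\epsilon\to 0$. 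The remaining three terms in \eqref{eq:Fgen} are linear and pass to the limit directly, so $u^*$ is a generalized solution.

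The main obstacle is precisely the decoupling performed in the last paragraph: only $L^1$-strong convergence of the singular kernels $\G^\epsilon\to\F$ is available, combined with merely weak-$\ast$ convergence of $eu^\epsilon$, so one cannot take limits inside the convolution naively. The split $A^\epsilon+B^\epsilon$ and the Fubini identity rewriting $B^\epsilon$ as a dual pairing of $eu^\epsilon-eu^*$ against a fixed $L^1$-function $\psi$ is what makes the argument work; this in turn relies crucially on the fact that in the weak formulation \eqref{eq:Fgen} the $\de/\de t$ in front of the memory integral has already been moved onto the smooth test function $\varphi$, so that $e\dot\varphi$ is available to build $\psi$.
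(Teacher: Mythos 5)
Your treatment of the memory term — the decomposition into $A^\epsilon$ (carrying $\G^\epsilon-\F$, estimated via the $L^1$-convergence) and $B^\epsilon$ (rewritten by Fubini as a pairing of $eu^\epsilon-eu^*$ against a fixed function $\psi$ built from $\F$ and $e\dot\varphi$) — is correct and is essentially what the paper does in the proof of Theorem~\ref{thm:irr_exis} via the function $v^\epsilon$.

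There is, however, a genuine gap in your compactness step, and it sits exactly where the paper flags the main difficulty. From the uniform $L^\infty$ bounds of Lemma~\ref{lem:irr_est} you extract $u^\epsilon\xstararrow{}{}u^*$ in $L^\infty(0,T;U_T)$ and $\dot u^\epsilon\xstararrow{}{}\dot u^*$ in $L^\infty(0,T;H)$, and then invoke ``a pointwise weak-limit argument as in~\cite{DM-Lar,DM-Luc}'' to conclude $u^*\in\mathcal C_w$ and to transfer the initial velocity. But $\mathcal C_w$ requires $\dot u^*\in C_w^0([0,T];H)$, and the standard route to this (and to the pointwise convergence $\dot u^\epsilon(t)\rightharpoonup\dot u^*(t)$ in $H$, which you also need for $\dot u^*(0)=u^1$) is a uniform bound on $\ddot u^\epsilon$ in $L^2(0,T;(U_0^D)')$. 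The paper's remark preceding Corollary~\ref{coro:irr_est} explicitly shows that this bound is \emph{not} available: it necessarily involves $\|\G^\epsilon(0)\|_B=\|\F(\epsilon)\|_B$, which is unbounded as $\epsilon\to0^+$ when $\F$ is singular at the origin (as in~\eqref{eq:Fdef}). So the reference to the cracked-domain literature does not close the step; with only the $L^\infty$ information you have, $\dot u^*$ is merely an $L^\infty(0,T;H)$ function with no asserted time-regularity.

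The paper circumvents this by introducing the auxiliary function
\begin{equation*}
\langle \alpha^\epsilon(t),v\rangle_{(U_0^D)'}:=(\dot u^\epsilon(t),v)_H+\int_0^t(\G^\epsilon(t-r)(e u^\epsilon(r)-eu^0),e v)_{H}\,\de r,
\end{equation*}
for which Corollary~\ref{coro:irr_est} \emph{does} yield an $H^1(0,T;(U_0^D)')$ bound depending on $\G^\epsilon$ only through $\|\G^\epsilon\|_{L^1(0,T;B)}$ (the derivative $\dot\alpha^\epsilon$ is read off from the equation and involves only $\C eu^\epsilon$, $f$, $N$). Passing to the weak limit $\alpha^*\in H^1(0,T;(U_0^D)')$, identifying the memory term in the limit, and subtracting it from $\alpha^*$ gives a representative of $\dot u^*$ defined for \emph{every} $t\in[0,T]$ in $(U_0^D)'$, which is weakly continuous there. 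Combined with $\dot u^*\in L^\infty(0,T;H)$ and Lemma~\ref{lem:wc}, this yields $\dot u^*\in C_w^0([0,T];H)$, the pointwise weak convergence $\dot u^\epsilon(t)\rightharpoonup\dot u^*(t)$ in $H$, and $\dot u^*(0)=u^1$. You should incorporate this (or an equivalent device) to make the compactness step rigorous; without it the membership $u^*\in\mathcal C_w$ is unjustified.
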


The proof of this theorem requires several preliminary results. First, in the next section, we prove the existence of a generalized solution when the tensor $\F$ is replaced by a tensor $\G\in C^2([0,T];B)$. Then, we show that such a solution satisfies an energy estimate, which depends via $\G$ only by its $L^1$-norm. In Section~\ref{sec:irr} we combine these two results to prove Theorem~\ref{thm:irr_exis}.


\section{The regularized model}\label{sec:reg}

In this section we deal with a regularized version of the system~\eqref{eq:Fsystem}, where the tensor $\F$ is replaced by a tensor $\G$ which is bounded at $t=0$. More precisely, we consider the following system
\begin{equation}\label{eq:Gsystem}
\begin{cases}
\ddot u(t)-\div(\C eu(t))-\div\left(\frac{\de}{\de t}\int_0^t\G(t-r)(eu(r)-eu^0)\,\de r\right)=f(t)&\text{in $\Omega\setminus \Gamma_t$,\hspace{0.2cm} $t\in(0,T)$},\\
u(t)=z(t)&\text{on $\partial_D\Omega$,\hspace{0.35cm} $t\in(0,T)$},\\
\C eu(t)\nu+\left(\frac{\de}{\de t}\int_0^t\G(t-r)(e u(r)-eu^0)\,\de r\right)\nu =N(t)&\text{on $\partial_N\Omega$,\hspace{0.33cm} $t\in(0,T)$},\\
\C eu(t)\nu+\left(\frac{\de}{\de t}\int_0^t\G(t-r)(e u(r)-eu^0)\,\de r\right)\nu =0&\text{on $\Gamma_t$,\hspace{0.68cm} $t\in(0,T)$},\\
u(0)=u^0,\quad\dot u(0)=u^1&\text{in $\Omega\setminus\Gamma_0$},
\end{cases}
\end{equation}
and we assume that $\G\colon [0,T]\to B$ satisfies
\begin{align}
&\G\in C^2([0,T];B)\label{eq:G1},\\
&\G(t,x)\xi\cdot \xi\ge 0&&\text{for every $\xi\in \R^d$, $t\in[0,T]$, and a.e. $x\in\Omega$},\label{eq:G2}\\
&\dot\G(t,x)\xi\cdot \xi\le 0 &&\text{for every $\xi\in \R^d$, $t\in[0,T]$, and a.e. $x\in\Omega$},\label{eq:G3}\\
&\ddot\G(t,x)\xi\cdot \xi\ge 0 &&\text{for every $\xi\in \R^d$, $t\in[0,T]$, and a.e. $x\in\Omega$}\label{eq:G4}.
\end{align}
As before, on $N$, $u^0$, $u^1$, and $\C$ we assume~\eqref{eq:data2}--\eqref{eq:C}, while for the Dirichlet datum $z$ we can require the weaker assumption
\begin{align}
&z\in W^{2,1}(0,T;H)\cap W^{1,1}(0,T;U_0).\label{eq:data1}
\end{align}

The notion of generalized solution to~\eqref{eq:Gsystem} is the same as before.

\begin{definition}[Generalized solution]
Assume~\eqref{eq:data2}--\eqref{eq:C} and~\eqref{eq:G1}--\eqref{eq:data1}. A function $u\in \mathcal C_w$ is a {\it generalized solution} to system~\eqref{eq:Gsystem} if $u(t)-z(t)\in U_t^D$ for every $t\in[0,T]$, $u(0)=u^0$ in $U_0$, $\dot u(0)=u^1$ in $H$, and for every $\varphi\in \Cc $ the following equality holds
\begin{align}\label{eq:Ggen}
&-\int_0^T(\dot u(t),\dot \varphi(t))_H\,\de t+\int_0^T(\C eu(t),e\varphi(t))_H\,\de t-\int_0^T\int_0^t(\G(t-r)(e u(r)-eu^0),e \dot \varphi(t))_{H}\,\de r\,\de t\nonumber\\
&=\int_0^T(f(t),\varphi(t))_H\,\de t+\int_0^T(N(t),\varphi(t))_{H_N}\,\de t.
\end{align}
\end{definition}

Since the time-dependent tensor $\G$ is well defined in $t=0$, we can give another notion of solution. In particular, the convolution integral is now differentiable, and we can write 
\begin{equation*}
\frac{\de}{\de t}\int_0^t\G(t-r)(eu(r)-eu^0)\,\de r=\G(0) (eu(t)-eu^0)+\int_0^t\dot\G(t-r)(eu(r)-eu^0)\,\de r.
\end{equation*}

\begin{definition}[Weak solution]
Assume~\eqref{eq:data2}--\eqref{eq:C} and~\eqref{eq:G1}--\eqref{eq:data1}. A function $u\in \mathcal C_w$ is a {\it weak solution} to system~\eqref{eq:Gsystem} if $u(t)-z(t)\in U_t^D$ for every $t\in[0,T]$, $u(0)=u^0$ in $U_0$, $\dot u(0)=u^1$ in $H$, and for every $\varphi\in\Cc$ the following equality holds
\begin{align}\label{eq:weaksol}
&-\int_0^T(\dot u(t),\dot\varphi(t))_H\,\de t+\int_0^T(\C eu(t),e\varphi(t))_H\,\de t+\int_0^T(\G(0) (eu(t)-eu^0), e \varphi(t))_{H}\,\de t\nonumber\\
&+\int_0^T\int_0^t(\dot\G(t-r)(eu(r)-eu^0) , e \varphi(t))_{H}\,\de r\,\de t=\int_0^T(f(t),\varphi(t))_H\,\de t+\int_0^T(N(t),\varphi(t))_{H_N}\,\de t.
\end{align}
\end{definition}
In this framework the two previous definitions are equivalent.
\begin{proposition}\label{lem:equiv}
Assume~\eqref{eq:data2}--\eqref{eq:C} and~\eqref{eq:G1}--\eqref{eq:data1}. Then $u\in\mathcal C_w$ is a generalized solution to~\eqref{eq:Gsystem} if and only if $u$ is a weak solution.
\end{proposition}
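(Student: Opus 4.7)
The plan is to show that the only terms differing between the two formulations, namely the convolution term
\[
A := -\int_0^T\int_0^t(\G(t-r)(eu(r)-eu^0),e\dot\varphi(t))_H\,\de r\,\de t
\]
in the generalized formulation and
\[
B := \int_0^T(\G(0)(eu(t)-eu^0),e\varphi(t))_H\,\de t+\int_0^T\int_0^t(\dot\G(t-r)(eu(r)-eu^0),e\varphi(t))_H\,\de r\,\de t
\]
in the weak formulation, are equal for every $\varphi\in\mathcal{C}^1_c$. Since all the other terms of \eqref{eq:Ggen} and \eqref{eq:weaksol} coincide, this identity $A=B$ yields both implications simultaneously.

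To compute $A$, I would first set $w(r):=eu(r)-eu^0$ and observe that $w\in L^\infty(0,T;H)$, because $u\in C_w^0([0,T];U_T)$ is weakly continuous (hence bounded in $U_T$ by the uniform boundedness principle), so $eu$ is bounded in $H$. Next, apply Fubini's theorem (legitimate since $\G\in C^2([0,T];B)\subset L^\infty(0,T;B)$, $w\in L^\infty(0,T;H)$, and $e\dot\varphi\in L^\infty(0,T;H)$ has compact support in $(0,T)$) to rewrite
\[
A=-\int_0^T\int_r^T(\G(t-r)w(r),e\dot\varphi(t))_H\,\de t\,\de r.
\]

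For each fixed $r\in[0,T]$ I would integrate by parts in $t$ on the inner integral. The scalar map $t\mapsto(\G(t-r)w(r),e\varphi(t))_H$ is of class $C^1$ on $[r,T]$, since $\G\in C^1([0,T];B)$ acts continuously on $H$ and $\varphi\in C^1_c(0,T;U^D_T)$. Its derivative is $(\dot\G(t-r)w(r),e\varphi(t))_H+(\G(t-r)w(r),e\dot\varphi(t))_H$, and the boundary contributions reduce to $-(\G(0)w(r),e\varphi(r))_H$, because $\varphi(T)=0$. Rearranging yields
\[
-\int_r^T(\G(t-r)w(r),e\dot\varphi(t))_H\,\de t=(\G(0)w(r),e\varphi(r))_H+\int_r^T(\dot\G(t-r)w(r),e\varphi(t))_H\,\de t.
\]
Integrating this identity in $r\in[0,T]$ and applying Fubini once more to the double integral on the right gives exactly $A=B$.

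I do not expect a real obstacle: the regularity $\G\in C^2([0,T];B)$ guarantees that no derivative ever lands on $u$, so the weak continuity of $u$ is more than enough to justify every step. The only bookkeeping to be careful about is (i) verifying the applicability of Fubini (which follows from the uniform bounds above) and (ii) noting that the boundary term at $t=T$ disappears because $\varphi\in\mathcal{C}^1_c$ is compactly supported in $(0,T)$. Since every step is an equality, the argument runs both ways, proving that $u$ is a generalized solution if and only if it is a weak solution.
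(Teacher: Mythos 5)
Your argument is correct, and it reaches the same identity $A=B$ (i.e.\ \eqref{eq:genweak}) by a genuinely different route from the paper. The paper fixes the auxiliary function $p(t):=\int_0^t(\G(t-r)(eu(r)-eu^0),e\varphi(t))_H\,\de r$, establishes $p\in\Lip([0,T])$ via a direct estimate of $|p(s)-p(t)|$, computes $\dot p$ a.e.\ by splitting the difference quotient into three pieces and passing to the limit (using Lebesgue's differentiation theorem for one of them), and finally uses $0=p(T)-p(0)=\int_0^T\dot p(t)\,\de t$. You instead use Fubini on the triangular region to swap the order of integration, integrate by parts classically in $t$ for each fixed $r$ (legitimate precisely because $\G\in C^1([0,T];B)$ and $\varphi\in C^1_c$, so no derivative ever falls on $u$), and then Fubini back. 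This is cleaner and more elementary; your justification of the Fubini steps (constant pointwise bound via $\|\G\|_{C^0}$, $\|eu\|_{L^\infty}$, $\|e\dot\varphi\|_{C^0}$ over a bounded region) is adequate, and the vanishing of the boundary term at $t=T$ from $\varphi\in\mathcal C^1_c$ is correctly identified. One small point worth stating explicitly: strong measurability of $r\mapsto w(r)=eu(r)-eu^0$ in $H$ (which makes the joint measurability of the integrand routine) follows from weak continuity of $u$ in $U_T$ together with separability, via Pettis' theorem. What the paper's heavier difference-quotient approach buys is that it establishes the Lipschitz regularity and a.e.\ differentiability of $p$ as a standalone fact, a pattern the authors reuse later (e.g.\ in the proof of Lemma~\ref{lem:irr_est} they invoke the same argument to show absolute continuity of an analogous function $p$); your proof, while slicker here, does not by itself provide that reusable regularity statement.
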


\begin{proof}
We only need to prove that~\eqref{eq:weaksol} is equivalent to~\eqref{eq:Ggen}. This is true if and only if the function $u\in \mathcal C_w$ satisfies for every $\varphi\in \Cc$ the following equality
\begin{align}\label{eq:genweak}
\int_0^T(\G(0)(eu(t)-eu^0), e \varphi(t))_{H}\,\de t&+\int_0^T\int_0^t(\dot\G(t-r)(eu(r)-eu^0), e \varphi(t))_{H}\,\de r\,\de t\nonumber\\
&=-\int_0^T\int_0^t(\G(t-r)(eu(r)-eu^0), e \dot\varphi(t))_{H}\,\de r\,\de t.
\end{align}
Let us consider for $t\in[0,T]$ the function 
\begin{equation*}
p(t):=\int_0^t(\G(t-r)(eu(r)-eu^0), e \varphi(t))_{H}\,\de r.
\end{equation*}
We claim that $p\in \Lip([0,T])$. Indeed, for every $s,t\in[0,T]$ with $s<t$ we have
\begin{align*}
|p(s)-p(t)|&\le\left|\int_s^t(\G(t- r) (eu(r)-eu^0), e \varphi(t))_{H}\,\de r \right|+\left|\int_0^s(\G(s- r)(e u(r)-eu^0), e \varphi(t)- e \varphi(s))_{H}\,\de r \right|\\
&\quad+ \left|\int_0^s((\G(t- r)-\G(s- r)) (eu(r)-eu^0), e \varphi(t))_{H}\,\de r \right|.
\end{align*}
Since
\begin{align*}
&\left|\int_s^t(\G(t- r) (eu(r)-eu^0), e \varphi(t))_{H}\,\de r \right|\le 2(t-s)\|\G\|_{C^0([0,T];B)}\| e \varphi\|_{C^0([0,T];H)}\| e u\|_{L^\infty(0,T;H)},\\
&\left|\int_0^s(\G(s- r) (eu(r)-eu^0), e \varphi(t)- e \varphi(s))_{H}\,\de r \right|\le 2(t-s)\|\G\|_{C^0([0,T];B)}\| e \dot\varphi\|_{C^0([0,T];H)}T\| e u\|_{L^\infty(0,T;H)},\\
&\left|\int_0^s((\G(t- r)-\G(s- r)) (eu(r)-eu^0), e \varphi(t))_{H}\,\de r \right|\le 2(t-s)\|\dot\G\|_{C^0([0,T];B)}\| e \varphi\|_{C^0([0,T];H)}T\| e u\|_{L^\infty(0,T;H)},
\end{align*}
we deduce that $p\in \Lip([0,T])$. In particular, there exists $\dot p(t)$ for a.e. $t\in(0,T)$. Given $t\in(0,T)$ and $h>0$ we can write
\begin{align*}
&\frac{p(t+h)-p(t)}{h}=\int_0^t(\frac{\G(t+h- r)-\G(t- r)}{h} (e u(r)-eu^0), e \varphi(t+h))_{H}\,\de r \\
&\quad+\dashint_t^{t+h}(\G(t+h- r) (e u(r)-eu^0), e \varphi(t+h))_{H}\,\de r+\int_0^t(\G(t- r) (e u(r)-eu^0),\frac{e\varphi(t+h)- e\varphi(t)}{h})_{H}\,\de r.
\end{align*}
Let us compute these three limits separately. We claim that for a.e. $t\in(0,T)$ we have
\begin{equation*}
\lim_{h\to 0^+}\dashint_t^{t+h}(\G(t+h- r) (e u(r)-eu^0), e \varphi(t+h))_{H}\,\de r =(\G(0) (e u(t)-eu^0), e \varphi(t))_{H}.
\end{equation*}
Indeed, by the Lebesgue's differentiation theorem, for a.e. $t\in(0,T)$ we get
\begin{align*}
&\left|\dashint_t^{t+h}(\G(t+h- r) (e u(r)-eu^0), e \varphi(t+h))_{H}\,\de r -(\G(0) (eu(t)-eu^0), e \varphi(t))_{H}\right|\\
&\le \|\G(0)\|_B\| e \varphi(t)\|_{H}\dashint_t^{t+h}\| eu(t)-eu(r)\|_{H}\,\de r+\|\G(0)\|_B\| e \varphi(t+h)- e \varphi(t)\|_{H}\dashint_t^{t+h}\| e u(r)-eu^0\|_{H}\,\de r \\
&\quad+\| e \varphi(t+h)\|_{H}\dashint_t^{t+h}\|\G(t+h- r)-\G(0)\|_B\| e u(r)-eu^0\|_{H}\,\de r\xrightarrow[h\to 0^+]{} 0.
\end{align*}
Moreover, for every $t\in(0,T)$ we have
\begin{equation*}
\lim_{h\to 0^+}\int_0^t(\frac{\G(t+h- r)-\G(t- r)}{h}(e u(r)-eu^0), e \varphi(t+h))_{H}\,\de r =\int_0^t(\dot\G(t- r) (e u(r)-eu^0), e \varphi(t))_{H}\,\de r 
\end{equation*}
since
\begin{align*}
e\varphi(t+h)\xrightarrow[h\to 0^+]{H} e \varphi(t),\quad\frac{\G(t+h-\,\cdot\,)-\G(t-\,\cdot\,)}{h} (eu(\cdot)-eu^0)\xrightarrow[h\to 0^+]{L^1(0,t;H)} \dot\G(t-\,\cdot\,) (eu(\cdot)-eu^0).
\end{align*}
Finally, for every $t\in(0,T)$ we get
\begin{equation*}
\lim_{h\to 0^+}\int_0^t(\G(t- r) (e u(r)-eu^0),\frac{ e \varphi(t+h)- e \varphi(t)}{h})_{H}\,\de r =\int_0^t(\G(t- r) (e u(r)-eu^0), e \dot\varphi(t))_{H}\,\de r 
\end{equation*}
because 
$$\frac{ e \varphi(t+h)- e \varphi(t)}{h}\xrightarrow[h\to 0^+]{H} e \dot\varphi(t).$$
Therefore, by the identity
\begin{equation*}
 0=p(T)-p(0)=\int_0^T\dot p(t)\,\de t
\end{equation*}
and the previous computations we deduce~\eqref{eq:genweak}.
\end{proof}

In the particular case in which the tensor $\G$ appearing in~\eqref{eq:Gsystem} is the one associated to the Standard viscoelastic model, i.e.
\begin{equation*}
    \G(t)=\frac{1}{\beta}\e^{-\frac{t}{\beta}}\B\quad\text{for $t\in[0,T]$}
\end{equation*}
with $\beta>0$ and $\B\in B$ non-negative tensor, then the existence of weak solutions (and so generalized solutions) was proved in~\cite{Sap}. Here we adapt the techniques of~\cite{Sap} to a general tensor $\G$ satisfying~\eqref{eq:G1}--\eqref{eq:G4}.

\subsection{Existence and energy-dissipation inequality}

In this subsection we prove the existence of a generalized solution to system~\eqref{eq:Gsystem}, by means of a time discretization scheme in the same spirit of~\cite{DM-Lar}. Moreover, we show that such a solution satisfies the energy-dissipation inequality~\eqref{eq:enin}. 

We fix $n\in\mathbb N$ and we set
\begin{equation*}
 \tau_n:=\frac{T}{n},\quad u_n^0:=u^0,\quad u_n^{-1}:=u^0-\tau_nu^1,\quad\delta z_n^0:=\dot z(0),\quad \delta \G_n^0:=0.
\end{equation*}
Let us define
\begin{align*}
&U_n^j:=U^D_{j\tau_n},& & z_n^j:=z(j\tau_n),& &  \G_n^j:=\G(j\tau_n)& & & &\text{for $j=0,\dots,n$},\\
&\delta z_n^j:=\frac{z_n^j- z_n^{j-1}}{\tau_n}, & & \delta^2 z_n^j:=\frac{\delta z_n^j-\delta z_n^{j-1}}{\tau_n},& & \delta \G_n^j:=\frac{\G_n^j-\G_n^{j-1}}{\tau_n},& &\delta ^2\G_n^j:=\frac{\delta \G_n^j-\delta \G_n^{j-1}}{\tau_n}& &\text{for $j=1,\dots,n$}.
\end{align*}
Regarding the forcing term and the Neumann datum we pose
\begin{align*}
&N^j_n:=N(j\tau_n)& &\text{for }j=0,\dots,n,\\ &f_n^j:=\dashint_{(j-1)\tau_n}^{j\tau_n} f(r)\,\de r,\quad \delta N_n^j:=\frac{N_n^j-N_n^{j-1}}{\tau_n}& &\text{for }j=1,\dots,n.
\end{align*}
For every $j=1,\dots,n$ let us consider the unique $u_n^j\in U_T$ with $u^j_n-z_n^j\in U_n^j$, which satisfies
\begin{equation}\label{eq:un}
(\delta^2 u_n^j,v)_H+(\C eu_n^j,ev)_H+(\G_n^0(e u_n^j-eu^0),e v)_{H}+\sum_{k=1}^j\tau_n(\delta \G_n^{j-k}(e u_n^k-eu^0),e v)_{H}=(f_n^j,v)_H+(N_n^j,v)_{H_N}
\end{equation}
for every $v\in U_n^j$, where
\begin{align*}
\delta u_n^j:=\frac{u_n^j-u_n^{j-1}}{\tau_n}\quad\text{for $j=0,\dots,n$},\quad \delta ^2u_n^j:=\frac{\delta u_n^j-\delta u_n^{j-1}}{\tau_n}\quad\text{for $j=1,\dots,n$}.
\end{align*}
The existence and uniqueness of $u_n^j$ is a consequence of Lax-Milgram's lemma. Notice that equation~\eqref{eq:un} is a sort of discrete version of~\eqref{eq:weaksol}, which we already know that is equivalent to~\eqref{eq:Ggen}.

We now use equation~\eqref{eq:un} to derive an energy estimate for the family $\{u_n^j\}_{j=1}^n$, which is uniform with respect to $n\in\mathbb N$.

\begin{lemma}\label{lem:reg_est}
Assume~\eqref{eq:data2}--\eqref{eq:C} and~\eqref{eq:G1}--\eqref{eq:data1}. Then there exists a constant $C$, independent of $n\in\mathbb N$, such that 
\begin{equation}\label{d-eq:est}
\max_{j=0,\dots,n}\|\delta u_n^j\|_H+\max_{j=0,\dots,n}\norm{eu_n^j}_H\le C.
\end{equation}
\end{lemma}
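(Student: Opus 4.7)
\medskip

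\noindent\textbf{Proof plan.} The plan is to test the discrete Euler--Lagrange equation~\eqref{eq:un} with the admissible increment $v = \tau_n(\delta u_n^j - \delta z_n^j)$, sum over $j=1,\dots,m$ for arbitrary $m\le n$, and identify the quantity
\begin{equation*}
E_n^m := \tfrac{1}{2}\|\delta u_n^m\|_H^2 + \tfrac{1}{2}(\C eu_n^m, eu_n^m)_H
\end{equation*}
as the leading term in a discrete Gronwall inequality. The test function is admissible in $U_n^j$ because $u_n^j - z_n^j \in U_n^j$, $u_n^{j-1}-z_n^{j-1}\in U_n^{j-1}$, and $U_n^{j-1}\subset U_n^j$ by the monotonicity hypothesis (H3). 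I would use the standard discrete identities
\begin{equation*}
\tau_n(\delta^2 u_n^j, \delta u_n^j)_H = \tfrac{1}{2}\|\delta u_n^j\|_H^2 - \tfrac{1}{2}\|\delta u_n^{j-1}\|_H^2 + \tfrac{\tau_n^2}{2}\|\delta^2 u_n^j\|_H^2,
\end{equation*}
\begin{equation*}
\tau_n(\C eu_n^j, e\delta u_n^j)_H = \tfrac{1}{2}(\C eu_n^j, eu_n^j)_H - \tfrac{1}{2}(\C eu_n^{j-1}, eu_n^{j-1})_H + \tfrac{\tau_n^2}{2}(\C e\delta u_n^j, e\delta u_n^j)_H,
\end{equation*}
so that after summation both the inertial and the elastic contributions telescope to $E_n^m - E_n^0$ plus non-negative remainders.

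The main obstacle is to handle the two memory contributions. Writing $w_n^k := eu_n^k - eu^0$ (so that $w_n^0 = 0$ and $\tau_n e\delta u_n^j = w_n^j - w_n^{j-1}$), the first memory term
\begin{equation*}
\sum_{j=1}^m \tau_n (\G_n^0 w_n^j, e\delta u_n^j)_H
\end{equation*}
is controlled by the same symmetric identity as above: using symmetry of $\G_n^0 = \G(0)$ and $\G(0)\ge 0$ from~\eqref{eq:G2}, it equals $\tfrac{1}{2}(\G_n^0 w_n^m, w_n^m)_H + \tfrac{1}{2}\sum_j (\G_n^0 (w_n^j-w_n^{j-1}), w_n^j-w_n^{j-1})_H\ge 0$, so it sits on the favourable side of the inequality. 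The double sum with $\delta\G_n^{j-k}$ has no obvious sign: I would introduce $S_n^j := \sum_{k=1}^{j} \tau_n \delta\G_n^{j-k} w_n^k$ and perform a discrete Abel summation in $j$, producing a boundary term $(S_n^m, w_n^m)_H$ together with a sum whose increments $S_n^{j+1}-S_n^j = \tau_n \delta\G_n^0 w_n^{j+1} + \sum_{k=1}^{j} \tau_n^2 \delta^2 \G_n^{j+1-k} w_n^k$ are estimated in $H$ using $\|\dot\G\|_{C^0([0,T];B)}$ and $\|\ddot\G\|_{C^0([0,T];B)}$, bounded by hypothesis~\eqref{eq:G1}.

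The right-hand side $\sum_{j=1}^m \tau_n[(f_n^j, \delta u_n^j-\delta z_n^j)_H + (N_n^j, \delta u_n^j-\delta z_n^j)_{H_N}]$ is treated by Cauchy--Schwarz on the forcing, by an Abel summation on the Neumann datum to trade the discrete derivative of $u_n^j$ for one on $N_n^j$ (controlled via $N\in W^{1,1}(0,T;H_N)$), and by bounding the $\delta z_n^j$ and $\delta^2 z_n^j$ contributions through~\eqref{eq:data1}. Collecting, one arrives at an inequality of the form
\begin{equation*}
E_n^m \le C_0 + C_1 \sum_{j=1}^m \tau_n E_n^j + C_2 \max_{0\le j\le m}\sqrt{E_n^j},
\end{equation*}
with $C_0, C_1, C_2$ independent of $n$; absorbing the last term with Young's inequality and applying the discrete Gronwall lemma yields~\eqref{d-eq:est}. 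The delicate step is the Abel-summation estimate of the unsigned memory term, where one must verify that the sign-indefinite part is indeed controlled by $\|\G\|_{C^2([0,T];B)}$ times the already-present energy, so that the Gronwall closure succeeds.
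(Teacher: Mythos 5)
Your proposal is correct and uses the same test function and discretization scheme, but it handles the memory terms by a genuinely different mechanism, and this is worth spelling out. The paper first rewrites the discrete convolution as
\begin{equation*}
\G_n^0(eu_n^j-eu^0)+\sum_{k=1}^{j}\tau_n\,\delta\G_n^{j-k}(eu_n^k-eu^0)=\G_n^{j-1}(eu_n^j-eu^0)+\sum_{k=1}^{j}\tau_n\,\delta\G_n^{j-k}(eu_n^k-eu_n^j),
\end{equation*}
and it is precisely the replacement of $eu_n^k-eu^0$ by $eu_n^k-eu_n^j$ inside the double sum that makes the subsequent Abel summation produce terms with a definite sign. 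After the algebraic rearrangement, each remainder involves either $\G_n^{j}$, $-\delta\G_n^{j}$ or $\delta^2\G_n^{j}$ tested against a quadratic form, so (G2)--(G4) guarantee all of them are nonnegative. The paper can then simply discard the whole memory block and land on the clean discrete inequality $\frac12\|\delta u_n^i\|_H^2+\frac12(\C eu_n^i,eu_n^i)_H\le\mathcal E(0)+\sum_j\tau_n L_n^j$, closed by an elementary absorption (no Gronwall). Your route skips the rearrangement, keeps $w_n^k=eu_n^k-eu^0$ inside the double sum, and therefore has no sign to exploit; you correctly identify this, perform the Abel summation on $S_n^j=\sum_k\tau_n\delta\G_n^{j-k}w_n^k$, and close by weighted Young plus discrete Gronwall. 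This works: the boundary term $(S_n^m,w_n^m)_H$ is absorbable after a weighted Young, the diagonal $k=j$ contribution of $\delta^2\G_n^1$ is $O(\tau_n\|\dot\G\|_{C^0})$ and not $O(\|\ddot\G\|_{C^0})$ (you should note that $\delta^2\G_n^1=\delta\G_n^1/\tau_n$ is not bounded by $\|\ddot\G\|_{C^0}$, but the extra $\tau_n^2$ saves it), and for $n$ large enough the $j=m$ term in the Gronwall sum can be absorbed. So the proof is sound, and it uses only (G1)--(G2), not the sign conditions (G3)--(G4). What you lose is the constant: your $C$ carries a Gronwall exponential and depends on $\|\G\|_{C^2}$ rather than on $\|\G\|_{C^1}$. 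More importantly, the discrete energy balance \eqref{d-en-bal-scomp} obtained by the paper's rearrangement is exactly what later yields the energy-dissipation inequality of Proposition~\ref{prop:reg_enin} and the $\|\G\|_{L^1}$-only bound of Lemma~\ref{lem:irr_est}; neither of those would follow from your Gronwall closure. For Lemma~\ref{lem:reg_est} in isolation your approach is perfectly adequate, but the paper's sign-based argument is the one the rest of the section is built on.
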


\begin{proof}
First, we notice
\begin{align*}
\G_n^0(e u_n^j-eu^0)+\sum_{k=1}^j \tau_n \delta \G_n^{j-k}(e u_n^k-eu^0)=\G_n^{j-1}(e u_n^j-eu^0)+\sum_{k=1}^j\tau_n\delta \G_n^{j-k}(e u_n^k-e u_n^j)\quad\text{for $j=1,\dots,n$}.
\end{align*}
Therefore, equation~\eqref{eq:un} can be written as
\begin{align*}
(\delta^2 u_n^j,v)_H+(\C eu_n^j,ev)_H&+(\G_n^{j-1}(e u_n^j-eu^0),e v)_{H}\hspace{-1pt}+\sum_{k=1}^j\tau_n(\delta \G_n^{j-k}(e u_n^k-e u_n^j),e v)_{H}=(f_n^j,v)_H+(N_n^j,v)_H
\end{align*}
for every $v\in U_n^j$. We fix $i\in\{1,\dots,n\}$. By taking $v:=\tau_n(\delta u_n^j-\delta z_n^j)\in U_n^j $ and summing over $j=1,\dots, i$, we get the following identity
\begin{align}\label{d-en-bal}
\sum_{j=1}^i\tau_n(\delta^2 u_n^j,\delta u_n^j)_H+\sum_{j=1}^i\tau_n(\C eu_n^j,e\delta u_n^j)_H&+\sum_{j=1}^i\tau_n(\G_n^{j-1}(e u_n^j-eu^0),e\delta u_n^j)_{H}\nonumber\\
&+\sum_{j=1}^i\sum_{k=1}^j\tau_n^2(\delta \G_n^{j-k}(e u_n^k-e u_n^j),e\delta u_n^j)_{H}=\sum_{j=1}^i \tau_n L_n^j,
\end{align}
where
\begin{align*}
L_n^j:=(f_n^j,\delta u_n^j-\delta z_n^j)_H&+(N_n^j,\delta u_n^j-\delta z_n^j)_{H_N}+(\delta^2 u_n^j,\delta z_n^j)_H\\
&+(\C eu_n^j,e\delta z_n^j)_H+(\G_n^{j-1}(eu_n^j-eu^0),e\delta z_n^j)_H+\sum_{k=1}^j\tau_n(\delta \G_n^{j-k}(eu_n^k-eu_n^j),e\delta z_n^j)_H.
\end{align*}
By using the identity
\begin{equation*}
|a|^2-a\cdot b=\frac{1}{2}|a|^2-\frac{1}{2}|b|^2+\frac{1}{2}|a-b|^2\quad\text{for every $a,b\in \R^d$}
\end{equation*}
we deduce
\begin{equation*}
\tau_n(\delta^2u_n^j,\delta u_n^j)_H=\norm{\delta u_n^j}_H^2-(\delta u_n^j,\delta u_n^{j-1})_H=\frac{1}{2}\norm{\delta u_n^j}_H^2-\frac{1}{2}\norm{\delta u_n^{j-1}}_H^2+\frac{1}{2}\tau_n^2\norm{\delta^2 u_n^j}_H^2.
\end{equation*}
Therefore
\begin{align}
\sum_{j=1}^i\tau_n(\delta^2u_n^j,\delta u_n^j)_H&=\frac{1}{2}\sum_{j=1}^i\norm{\delta u_n^j}_H^2-\frac{1}{2}\sum_{j=1}^i\norm{\delta u_n^{j-1}}_H^2+\frac{1}{2}\sum_{j=1}^i\tau_n^2\norm{\delta^2 u_n^j}_H^2\nonumber\\
&=\frac{1}{2}\norm{\delta u_n^i}_H^2-\frac{1}{2}\norm{u^1}_H^2+\frac{1}{2}\sum_{j=1}^i\tau_n^2\norm{\delta^2 u_n^j}_H^2.
\end{align}
Similarly, we have
\begin{align}
\sum_{j=1}^i\tau_n(\C e u_n^j,e\delta u_n^j)_{H}
&=\frac{1}{2}(\C e u_n^i,e u_n^i)_{H}-\frac{1}{2}(\C e u^0,e u^0)_{H}+\frac{1}{2}\sum_{j=1}^i\tau_n^2(\C e\delta u_n^j,e\delta u_n^j)_{H}.
\end{align}
Moreover, we can write
\begin{align*}
&\tau_n(\G_n^{j-1}(e u_n^j-eu^0),e\delta u_n^j)_{H}=(\G_n^{j-1}(e u_n^j-eu^0),e u_n^j-eu^0)_{H}-(\G_n^{j-1}(e u_n^j-eu^0),e u_n^{j-1}-eu^0)_{H}\\
&=\frac{1}{2}(\G_n^{j-1}(e u_n^j-eu^0),e u_n^j-eu^0)_{H}-\frac{1}{2}(\G_n^{j-1}(e u_n^{j-1}-eu^0),e u_n^{j-1}-eu^0)_{H}+\frac{1}{2}\tau_n^2(\G_n^{j-1}e\delta u_n^j,e\delta u_n^j)_{H}\\
&=\frac{1}{2}(\G_n^j (eu_n^j-eu^0),e u_n^j-eu^0)_{H}-\frac{1}{2}(\G_n^{j-1}(e u_n^{j-1}-eu^0),e u_n^{j-1}-eu^0)_{H}\\
&\hspace{5.2cm}-\frac{1}{2}\tau_n(\delta \G_n^j(e u_n^j-eu^0),e u_n^j-eu^0)_{H}+\frac{1}{2}\tau_n^2(\G_n^{j-1}e\delta u_n^j,e\delta u_n^j)_{H}.
\end{align*}
As consequence of this we obtain
\begin{align}
&\sum_{j=1}^i\tau_n(\G_n^{j-1}(e u_n^j-eu^0),e\delta u_n^j)_{H}\nonumber\\
&=\frac{1}{2}\sum_{j=1}^i(\G_n^j(e u_n^j-eu^0),e u_n^j-eu^0)_{H}-\frac{1}{2}\sum_{j=1}^i(\G_n^{j-1}(e u_n^{j-1}-eu^0),e u_n^{j-1}-eu^0)_{H}\nonumber\\
&\quad-\frac{1}{2}\sum_{j=1}^i\tau_n(\delta \G_n^j (e u_n^j-eu^0),e u_n^j-eu^0)_{H}+\frac{1}{2}\sum_{j=1}^i\tau_n^2(\G_n^{j-1}e\delta u_n^j,e\delta u_n^j)_{H}\nonumber\\
&=\frac{1}{2}(\G_n^i (e u_n^i-eu^0),e u_n^i-eu^0)_{H}-\frac{1}{2}\sum_{j=1}^i\tau_n(\delta \G_n^j(e u_n^j-eu^0),e u_n^j-eu^0)_{H}+\frac{1}{2}\sum_{j=1}^i\tau_n^2(\G_n^{j-1}e\delta u_n^j,e\delta u_n^j)_{H}.
\end{align}
Finally, let us consider the term
\begin{equation*}
\sum_{j=1}^i\sum_{k=1}^j\tau_n^2(\delta \G_n^{j-k}(e u_n^k-e u_n^j),e\delta u_n^j)_{H}=\sum_{k=1}^i\sum_{j=k}^i\tau_n^2(\delta \G_n^{j-k}(e u_n^k-e u_n^j),e\delta u_n^j)_{H}.
\end{equation*}
We can write
\begin{align*}
&\sum_{j=k}^i\tau_n^2(\delta \G_n^{j-k}(e u_n^k-e u_n^j),e\delta u_n^j)_{H}=-\sum_{j=k}^i\tau_n(\delta \G_n^{j-k}(e u_n^j-e u_n^k),e u_n^j-e u_n^{j-1})_{H}\\
&=-\sum_{j=k}^i\tau_n(\delta \G_n^{j-k}(e u_n^j-e u_n^k),e u_n^j-e u_n^k)_{H}+\sum_{j=k}^i\tau_n(\delta \G_n^{j-k}(e u_n^j-e u_n^k),e u_n^{j-1}-e u_n^k)_{H}\\
&=-\frac{1}{2}\sum_{j=k}^i\tau_n(\delta \G_n^{j-k}(e u_n^j-e u_n^k),e u_n^j-e u_n^k)_{H}+\frac{1}{2}\sum_{j=k}^i\tau_n(\delta \G_n^{j-k}(e u_n^{j-1}-e u_n^k),e u_n^{j-1}-e u_n^k)_{H}\\
&\hspace{7cm}-\frac{1}{2}\sum_{j=k}^i\tau_n^3(\delta \G_n^{j-k}e\delta u_n^j,e\delta u_n^j)_{H}\\
&=-\frac{1}{2}\sum_{j=k}^i\tau_n(\delta \G_n^{j-k+1}(e u_n^j-e u_n^k),e u_n^j-e u_n^k)_{H}+\frac{1}{2}\sum_{j=k}^i\tau_n(\delta \G_n^{j-k}(e u_n^{j-1}-e u_n^k),e u_n^{j-1}-e u_n^k)_{H}\\
&\quad+\frac{1}{2}\sum_{j=k}^i\tau_n^2(\delta^2 \G_n^{j-k+1}(e u_n^j-e u_n^k),e u_n^j-e u_n^k)_{H}-\frac{1}{2}\sum_{j=k}^i\tau_n^3(\delta \G_n^{j-k}e\delta u_n^j,e\delta u_n^j)_{H}\\
&=\frac{1}{2}\sum_{j=k}^i\tau_n^2(\delta^2 \G_n^{j-k+1}(e u_n^j-e u_n^k),e u_n^j-e u_n^k)_{H}-\frac{1}{2}\sum_{j=k}^i\tau_n^3(\delta \G_n^{j-k}e\delta u_n^j,e\delta u_n^j)_{H}\\
&\hspace{7.25cm}-\frac{1}{2}\tau_n(\delta \G_n^{i-k+1}(e u_n^i-e u_n^k),e u_n^i-e u_n^k)_{H}
\end{align*}
because $\delta \G_n^0=0$. Therefore, we deduce
\begin{align}
&\sum_{j=1}^i\sum_{k=1}^j\tau_n^2(\delta \G_n^{j-k}(e u_n^k-e u_n^j),e\delta u_n^j)_{H}\nonumber\\
&=\frac{1}{2}\sum_{k=1}^i\sum_{j=k}^i\tau_n^2(\delta^2 \G_n^{j-k+1}(e u_n^j-e u_n^k),e u_n^j-e u_n^k)_{H}-\frac{1}{2}\sum_{k=1}^i\sum_{j=k}^i\tau_n^3(\delta \G_n^{j-k}e\delta u_n^j,e\delta u_n^j)_{H}\nonumber\\
&\quad-\frac{1}{2}\sum_{k=1}^i\tau_n(\delta \G_n^{i-k+1}(e u_n^i-e u_n^k),e u_n^i-e u_n^k)_{H}\nonumber\\
&=\frac{1}{2}\sum_{j=1}^i\sum_{k=1}^j\tau_n^2(\delta^2 \G_n^{j-k+1}(e u_n^j-e u_n^k),e u_n^j-e u_n^k)_{H}-\frac{1}{2}\sum_{j=1}^i\sum_{k=1}^j\tau_n^3(\delta \G_n^{j-k}e\delta u_n^j,e\delta u_n^j)_{H}\nonumber\\
&\quad-\frac{1}{2}\sum_{j=1}^i\tau_n(\delta \G_n^{i-j+1}(e u_n^i-e u_n^j),e u_n^i-e u_n^j)_{H}\label{scomp-3}.
\end{align}

By combining together~\eqref{d-en-bal}--\eqref{scomp-3}, we obtain for $i=1,\dots,n$ the following discrete energy equality 
\begin{align}
&\frac{1}{2}\|\delta u_n^i\|_H^2+\frac{1}{2}(\C eu_n^i,eu_n^i)_H+\frac{1}{2}(\G_n^i (e u_n^i-eu^0),e u_n^i-eu^0)_{H}-\frac{1}{2}\sum_{j=1}^i\tau_n(\delta \G_n^{i-j+1}(e u_n^i-e u_n^j),e u_n^i-e u_n^j)_{H}\nonumber\\
&\quad-\frac{1}{2}\sum_{j=1}^i\tau_n(\delta \G_n^j(e u_n^j-eu^0),e u_n^j-eu^0)_{H}+\frac{1}{2}\sum_{j=1}^i\sum_{k=1}^j\tau_n^2(\delta^2 \G_n^{j-k+1}(e u_n^j-e u_n^k),e u_n^j-e u_n^k)_{H}\nonumber\\
&\quad+\frac{\tau^2_n}{2}\left(\sum_{j=1}^i\|\delta^2u_n^j\|_H^2+\sum_{j=1}^i(\C e\delta u_n^j,e\delta u_n^j)_{H}+\sum_{j=1}^i(\G_n^{j-1}e\delta u_n^j,e\delta u_n^j)_{H}-\sum_{j=1}^i\sum_{k=1}^j\tau_n(\delta \G^{j-k}e\delta u_n^j,e\delta u_n^j)_{H}\right)\nonumber\\
&\hspace{8cm}=\frac{1}{2}\|u^1\|_H^2+\frac{1}{2}(\C e u^0,e u^0)_{H}+\sum_{j=1}^i\tau_n L_n^j.\label{d-en-bal-scomp}
\end{align}
By our assumptions on $\G$ we deduce
\begin{align*}
&\G_n^j(x)\xi\cdot\xi\ge 0&&\quad\text{for a.e. $x\in\Omega$ and every $\xi\in\R^d$ and $j=0,\dots,n$},\\
&\delta \G_n^j(x)\xi\cdot\xi=\dashint_{(j-1)\tau_n}^{j\tau_n}\dot \G(r,x)\xi\cdot\xi\,\de r\le 0&&\quad\text{for a.e. $x\in\Omega$ and every $\xi\in\R^d$ and $j=1,\dots,n$},\\
&\delta^2\G_n^j(x)\xi\cdot\xi
=\dashint_{(j-1)\tau_n}^{j\tau_n}\dashint_{r-\tau_n}^r\ddot \G(s,x)\xi\cdot\xi\,\de s\,\de r\ge 0&&\quad\text{for a.e. $x\in\Omega$ and every $\xi\in\R^d$ and $j=2,\dots,n$}.
\end{align*}
Hence, thanks to~\eqref{d-en-bal-scomp}, for every $i=1,\dots,n$ we can write 
\begin{equation}\label{d-en-in}
\frac{1}{2}\|\delta u_n^i\|_H^2+\frac{1}{2}(\C eu_n^i,eu_n^i)_H\le\frac{1}{2}\|u^1\|_H^2+\frac{1}{2}(\C e u^0,e u^0)_{H}+\sum_{j=1}^i\tau_n L_n^j.
\end{equation}

Let us estimate the right-hand side in~\eqref{d-en-in} from above. We set
\begin{equation*}
 K_n:=\max_{j=0,..,n}\|\delta u_n^j\|_H,\quad E_n:=\max_{j=0,..,n}\|e u_n^j\|_H.
\end{equation*}
Therefore, we have the following bounds
\begin{align}
\left|\sum_{j=1}^i \tau_n(f_n^j,\delta u_n^j)_H\right| &
\leq \sqrt{T}\|f\|_{L^2(0,T;H)}K_n,\\
\left|\sum_{j=1}^i \tau_n(f_n^j,\delta z_n^j)_H\right| 
&\leq \|f\|_{L^2(0,T;H)}\|\dot z\|_{L^2(0,T;H)},\\
 \left|\sum_{j=1}^i \tau_n(\C eu_n^j,e\delta z_n^j)_H\right|&\leq \|\C\|_B\|e\dot z\|_{L^1(0,T;H)}E_n,\\
\left|\sum_{j=1}^i\tau_n (\G_n^{j-1}(eu_n^j-eu^0),e\delta z_n^j)_H\right|&
\leq 2\|\G\|_{C^0([0,T];B)}\|e\dot z\|_{L^1(0,T;H)}E_n.
\end{align}
Notice that the following discrete integrations by parts hold
\begin{align}
&\sum_{j=1}^i \tau_n(\delta^2 u_n^j,\delta z^j_n)_H=(\delta u_n^i,\delta z_n^i)_H-(\delta u_n^0,\delta z_n^0)_H-\sum_{j=1}^i\tau_n (\delta u_n^{j-1},\delta^2 z_n^j)_H,\label{dis-part-1}\\
&\sum_{j=1}^i \tau_n(N_n^j,\delta u^j_n)_{H_N}=(N_n^i,u_n^i)_{H_N}-(N_n^0,u^0_n)_{H_N}-\sum_{j=1}^i\tau_n (\delta N_n^{j},u_n^{j-1})_{H_N},\label{dis-part-2}\\
&\sum_{j=1}^i \tau_n(N_n^j,\delta z^j_n)_{H_N}=(N_n^i,z^i_n)_{H_N}-(N_n^0,z^0_n)_{H_N}-\sum_{j=1}^i\tau_n (\delta N_n^{j},z_n^{j-1})_{H_N}.\label{dis-part-3}
\end{align}
By means of~\eqref{dis-part-1} we can write
\begin{align}
\left| \sum_{j=1}^i (\delta^2 u_n^j,\delta z^j_n)_H\right|&\leq \|\delta u_n^i\|_H\|\delta z_n^i\|_H+\|\delta u^0_n\|_H\|\delta z^0_n\|_H+ \sum_{j=1}^i \tau_n \|\delta u_n^{j-1}\|_H\|\delta^2 z_n^j\|_H\nonumber\\
&\leq (2\|\dot z\|_{C^0([0,T];H)}+\|\ddot z\|_{L^1(0,T;H)})K_n.
\end{align}
Moreover, thanks to
\begin{equation}
\|u_n^i\|_{U_T}\le \norm{u_n^i}_H+E_n\leq \sum_{j=1}^i\tau_n \norm{\delta u_n^j}_H+\norm{u^0}_H+E_n\leq T K_n+E_n+\|u^0\|_H\quad\text{for $i=0,\dots,n$}
\end{equation}
and to~\eqref{dis-part-2} we obtain
\begin{align}
\left| \sum_{j=1}^i \tau_n(N_n^j,\delta u^j_n)_{H_N}\right|&\leq \|N_n^i\|_{H_N}\|u_n^i\|_{H_N}+\|N^0_n\|_{H_N}\|u^0_n\|_{H_N}+\sum_{j=1}^i\tau_n \|\delta N_n^{j}\|_{H_N}\|u_n^{j-1}\|_{H_N}\nonumber\\
&\leq C_{tr}\|N\|_{C^0([0,T];H_N)}(\|u_n^i\|_{U_T}+\|u^0_n\|_{U_T})+C_{tr}\sum_{j=1}^i\tau_n \|\delta N_n^{j}\|_{H_N}\|u_n^{j-1}\|_{U_T}\nonumber\\
&\leq C_{tr}\left(2\|N\|_{C^0([0,T];H_N)}+\|\dot N\|_{L^1(0,T;H_N)}\right)(E_n+TK_n+\|u^0\|_H).
\end{align}
Similarly, by~\eqref{dis-part-3} we obtain
\begin{equation}
\left| \sum_{j=1}^i \tau_n(N_n^j,\delta z^j_n)_{H_N}\right|\leq 
C_{tr}\left(2\|N\|_{C^0([0,T];H_N)}+\|\dot N\|_{L^1(0,T;H_N)}\right)\|z\|_{C^0([0,T];U_0)}.
\end{equation}
Finally, we have
\begin{align}\label{rhs-6}
\left|\sum_{j=1}^i\sum_{k=1}^j \tau ^2_n(\delta \G_n^{j-k}(eu_n^k-eu_n^j),e\delta z_n^j)_H\right|&\leq \sum_{j=1}^i\sum_{k=1}^j\tau_n^2\|\delta \G_n^{j-k}\|_B\|eu_n^k-eu_n^j\|_H\|e\delta z_n^j\|_H\nonumber\\
&\le 2T\|\dot \G\|_{C^0([0,T];B)}\|e\dot z\|_{L^1(0,T;H)}E_n.
\end{align}
By considering~\eqref{d-en-in}--\eqref{rhs-6} and using~\eqref{eq:C}, we obtain the existence of a constant $C_1=C_1(z,N,f,u^0,\C,\G)$ such that
\begin{align*}
&\|\delta u_n^i\|^2_H+\gamma\|eu_n^i\|_H^2\leq \|u^1\|_H^2+\|\C\|_{B}\|e u^0\|_{H}^2+ C_1\left(1+K_n+E_n\right)\quad\text{for $i=1,\dots,n$}.
\end{align*} 
In particular, since the right-hand side is independent of $i$, $u_n^0=u^0$ and $\delta u_n^0=u^1$, there exists another constant $C_2=C_2(z,N,f,u^0,u^1,\C,\G)$ for which we have
\begin{equation*}
K_n^2+ E_n^2\le C_2(1+ K_n+ E_n) \quad\text{for every $n\in\mathbb N$}.
\end{equation*}
This implies the existence of a constant $C=C(z,N,f,u^0,u^1,\C,\G)$ independent of $n\in\N$ such that 
\begin{equation*}
\|\delta u_n^j\|_H+\|eu_n^j\|_H\leq K_n+E_n\leq C\qquad \text{for every $j=1,\dots,n$ and $n\in \N$},
\end{equation*}
which gives~\eqref{d-eq:est}.
\end{proof}

A first consequence of Lemma~\ref{lem:reg_est} is the following uniform estimate on the family $\{\delta^2 u_n^j\}_{j=1}^n$. 

\begin{corollary}
Assume~\eqref{eq:data2}--\eqref{eq:C} and~\eqref{eq:G1}--\eqref{eq:data1}. Then there exists a constant $\tilde C$, independent of $n\in\mathbb N$, such that 
\begin{equation}\label{sec-der-bound}
\sum_{j=1}^n\tau_n\|\delta^2 u^j_n\|^2_{(U^D_0)'}\leq\tilde C.
\end{equation}
\end{corollary}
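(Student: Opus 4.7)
My plan is to derive the bound by using the discrete equation \eqref{eq:un} as a defining identity for $\delta^2 u_n^j$ acting on test functions in $U_0^D$, then estimate each term of the right-hand side using Lemma~\ref{lem:reg_est} and the regularity of the data.

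First, I would observe that by the monotonicity assumption (H3) on the cracks, $U_0^D \subseteq U_n^j$ for every $j=1,\dots,n$, so that \eqref{eq:un} can legitimately be tested against any $v \in U_0^D$ with $\|v\|_{U_0} \le 1$. Solving for the $(\delta^2 u_n^j, v)_H$ term and taking absolute values, one gets
\begin{align*}
|(\delta^2 u_n^j,v)_H| &\le \|f_n^j\|_H + C_{tr}\|N_n^j\|_{H_N} + \|\C\|_B \|eu_n^j\|_H \\
&\quad + \|\G_n^0\|_B (\|eu_n^j\|_H + \|eu^0\|_H) + \sum_{k=1}^j \tau_n \|\delta\G_n^{j-k}\|_B (\|eu_n^k\|_H + \|eu^0\|_H),
\end{align*}
where I have used $\|v\|_H, \|ev\|_H \le 1$ and $\|v\|_{H_N} \le C_{tr}$. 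Taking the supremum over such $v$ gives an upper bound on $\|\delta^2 u_n^j\|_{(U_0^D)'}$.

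Next I would bound each piece uniformly in $j$ and $n$. By Lemma~\ref{lem:reg_est}, $\|eu_n^k\|_H$ is bounded by a constant $C$ independent of $n,k$. From~\eqref{eq:G1} we have $\|\G\|_{C^0([0,T];B)}$ and $\|\dot\G\|_{C^0([0,T];B)}$ finite, which in particular gives $\|\delta\G_n^{j-k}\|_B \le \|\dot\G\|_{C^0([0,T];B)}$ and therefore
\begin{equation*}
\sum_{k=1}^j \tau_n \|\delta\G_n^{j-k}\|_B (\|eu_n^k\|_H + \|eu^0\|_H) \le T \|\dot\G\|_{C^0([0,T];B)} (C + \|eu^0\|_H).
\end{equation*}
Thus $\|\delta^2 u_n^j\|_{(U_0^D)'} \le \|f_n^j\|_H + C_{tr}\|N_n^j\|_{H_N} + M$ for a constant $M$ independent of $j$ and $n$.

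Finally, I would square, multiply by $\tau_n$, and sum. For the forcing term, Jensen's inequality applied to $f_n^j = \dashint_{(j-1)\tau_n}^{j\tau_n} f(r)\,\de r$ gives
\begin{equation*}
\sum_{j=1}^n \tau_n \|f_n^j\|_H^2 \le \sum_{j=1}^n \int_{(j-1)\tau_n}^{j\tau_n} \|f(r)\|_H^2 \,\de r = \|f\|_{L^2(0,T;H)}^2,
\end{equation*}
which is finite by~\eqref{eq:data2}. For the Neumann term, the embedding $W^{1,1}(0,T;H_N) \hookrightarrow C^0([0,T];H_N)$ gives $\|N_n^j\|_{H_N} \le \|N\|_{C^0([0,T];H_N)}$, so $\sum_{j=1}^n \tau_n \|N_n^j\|_{H_N}^2 \le T\|N\|_{C^0([0,T];H_N)}^2$. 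Combining these with the elementary inequality $(a+b+c)^2 \le 3(a^2+b^2+c^2)$ yields \eqref{sec-der-bound}.

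There is no serious obstacle here: the main point is simply to recognize that $U_0^D$ sits inside every $U_n^j$ thanks to (H3), which lets one use \eqref{eq:un} as an elliptic-type identity bounding $\delta^2 u_n^j$ in the weakest dual space $(U_0^D)'$; everything else is a direct application of Lemma~\ref{lem:reg_est} together with the regularity of $\G$, $f$, and $N$.
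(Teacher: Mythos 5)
Your proposal is correct and follows essentially the same route as the paper: test \eqref{eq:un} against $v\in U_0^D\subseteq U_n^j$ with $\|v\|_{U_0}\le 1$, bound the elastic, memory, and convolution terms uniformly via Lemma~\ref{lem:reg_est} and the $C^1$-regularity of $\G$, then square, multiply by $\tau_n$, and sum, handling $f$ by Jensen's inequality and $N$ by its $C^0$ bound. The only cosmetic difference is that the paper first rewrites \eqref{eq:un} with $\G_n^{j-1}$ in place of $\G_n^0$ before estimating, which changes the explicit constants but not the argument.
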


\begin{proof}
Thanks to equation~\eqref{eq:un} and to Lemma~\ref{lem:reg_est}, for every $j=1,\dots,n$ and $v\in U^D_0 \subseteq U_n^j$ with $\|v\|_{U_0}\leq 1$ we have
\begin{align*}
|(\delta^2 u^j_n,v)_H|\leq C\left(\|\C\|_B + 2\|\G \|_{C^0([0,T];B)} +2T\|\dot \G\|_{C^0([0,T];B)}\right) +\| f^j_n\|_H+C_{tr}\|N\|_{C^0([0,T];H_N)}.
\end{align*}
By taking the supremum over $v\in U^D_0$ with $\norm{v}_{U_0}\leq1$ we obtain
\begin{equation*}
\|\delta^2 u^j_n\|_{(U^{D}_0)'}^2\leq 3C^2\left(\|\C\|_B + 2\|\G \|_{C^0([0,T];B)} +2T\|\dot \G\|_{C^0([0,T];B)}\right)^2+3\| f^j_n\|_H^2+3C_{tr}^2\|N\|_{C^0([0,T];H_N)}^2.
\end{equation*}
We multiply this inequality by $\tau_n$ and we sum over $j=1,\dots,n$ to get
\eqref{sec-der-bound}.
\end{proof}

We now want to pass to the limit into equation~\eqref{eq:un} to obtain a generalized solution to system~\eqref{eq:Gsystem}. 
Let us recall the following result, whose proof can be found for example in~\cite{DL}.

\begin{lemma}\label{lem:wc}
Let $X,Y$ be two reflexive Banach spaces such that $X\hookrightarrow Y$ continuously. Then 
$$L^{\infty}(0, T;X)\cap C^0_w([0, T];Y)= C^0_w([0, T];X).$$
\end{lemma}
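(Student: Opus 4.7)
The plan is to prove the two inclusions separately. The inclusion $C^0_w([0,T];X) \subseteq L^{\infty}(0,T;X) \cap C^0_w([0,T];Y)$ is the easy one. If $u\in C^0_w([0,T];X)$, then for every $x^*\in X^*$ the map $t\mapsto \langle x^*, u(t)\rangle_{X'}$ is continuous, so $\{u(t)\}_{t\in[0,T]}$ is weakly bounded in $X$ and hence norm-bounded by the uniform boundedness principle, giving $u\in L^\infty(0,T;X)$. Moreover, any $y^*\in Y^*$ restricts via the continuous embedding to an element of $X^*$, so $t\mapsto \langle y^*, u(t)\rangle_{Y'}$ is continuous, proving $u\in C^0_w([0,T];Y)$.

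For the reverse inclusion, let $u\in L^\infty(0,T;X)\cap C^0_w([0,T];Y)$ and set $M:=\|u\|_{L^\infty(0,T;X)}$. The first substep is to upgrade the essential bound $\|u(t)\|_X\le M$ to a pointwise bound on $[0,T]$. Fix $t\in[0,T]$ and pick a sequence $t_n\to t$ in the full-measure set where $\|u(t_n)\|_X\le M$. By reflexivity of $X$, up to a subsequence $u(t_n)\rightharpoonup v$ weakly in $X$ with $\|v\|_X\le M$. Since $X\hookrightarrow Y$ is a continuous linear map, it is weak-to-weak continuous, so $u(t_n)\rightharpoonup v$ weakly in $Y$; but weak continuity in $Y$ gives $u(t_n)\rightharpoonup u(t)$ in $Y$. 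Uniqueness of weak limits in $Y$ and injectivity of the embedding then force $u(t)=v\in X$ with $\|u(t)\|_X\le M$.

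The second substep is to show weak continuity into $X$. Fix $x^*\in X^*$ and a sequence $t_n\to t$ in $[0,T]$; the claim is $\langle x^*, u(t_n)\rangle_{X'}\to \langle x^*, u(t)\rangle_{X'}$. Argue by contradiction: if this fails, extract a subsequence (not relabeled) along which $|\langle x^*, u(t_n)-u(t)\rangle_{X'}|\ge\varepsilon$. Since $\{u(t_n)\}$ is bounded in $X$ by the first substep, reflexivity of $X$ yields a further subsequence with $u(t_n)\rightharpoonup w$ weakly in $X$ for some $w\in X$. As before, this implies $u(t_n)\rightharpoonup w$ in $Y$, and weak continuity in $Y$ together with injectivity of the embedding forces $w=u(t)$. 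Testing with $x^*$ contradicts the lower bound $\varepsilon$, concluding the proof.

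The main obstacle is the first substep: the assumption $u\in L^\infty(0,T;X)$ gives the bound only almost everywhere, and we must propagate it to every $t\in[0,T]$ using reflexivity of $X$ combined with the weak continuity in $Y$ to identify the limit. Once this pointwise bound is in hand, the rest of the argument is a straightforward subsequence extraction based on reflexivity and uniqueness of weak limits.
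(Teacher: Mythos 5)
The paper does not prove this lemma; it simply cites Dautray--Lions \cite{DL}. Your self-contained proof is correct and follows the standard route: both inclusions are handled cleanly, and the key step in the nontrivial direction — using reflexivity of $X$ to extract weak subsequential limits and then using the weak continuity in the larger space $Y$ (together with the injectivity of the embedding) to identify those limits with $u(t)$ — is exactly the right mechanism, first to upgrade the a.e.\ bound $\|u(\cdot)\|_X\le M$ to a bound at every $t\in[0,T]$ and then to deduce weak continuity into $X$ by a subsequence/contradiction argument. The only point you pass over silently (as does the literature) is the Bochner measurability of a weakly continuous $X$-valued map, which is needed for membership in $L^\infty(0,T;X)$ in the easy inclusion; this follows from Pettis' theorem once one observes that the weakly compact range $u([0,T])$ is norm-separable, but it is standard and not worth belaboring here.
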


Let us define the following sequences of functions which are an approximation of the generalized solution:
\begin{align*}
&u_n(t)=u_n^i+(t-i\tau_n)\delta u_n^i& &\text{for $t\in[(i-1)\tau_n,i\tau_n]$ and $i=1,\dots,n$},\\
&u_n^+(t)=u_n^i& &\text{for $t\in((i-1)\tau_n,i\tau_n]$ and $i=1,\dots,n$},& & u_n^+(0)=u_n^0,\\
&u_n^-(t)=u_n^{i-1}& &\text{for $t\in[(i-1)\tau_n,i\tau_n)$ and $i=1,\dots,n$},& & u_n^-(T)=u_n^n.
\end{align*}
Moreover, we consider also the sequences
\begin{align*}
&\tilde u_n(t)=\delta u_n^i+(t-i\tau_n)\delta^2 u_n^i& &\text{for $t\in[(i-1)\tau_n,i\tau_n]$ and $i=1,\dots,n$},\\
&\tilde u_n^+(t)=\delta u_n^i& &\text{for $t\in((i-1)\tau_n,i\tau_n]$ and $i=1,\dots,n$},& & \tilde u_n^+(0)=\delta u_n^0,\\
&\tilde u_n^-(t)=\delta u_n^{i-1}& &\text{for $t\in[(i-1)\tau_n,i\tau_n)$ and $i=1,\dots,n$},& & \tilde u_n^-(T)=\delta u_n^n,
\end{align*}
which approximate the first time derivative of the generalized solution. In a similar way, we define also $f_n^+$, $N_n^+$, $\tilde N_n^+$, $z_n^\pm$, $\tilde z_n$, $\tilde z_n^+$, $\G_n^\pm$, $\tilde\G_n$, $\tilde\G_n^+$. Thanks to the uniform estimates of Lemma~\ref{lem:reg_est} we derive the following compactness result:

\begin{lemma}\label{lem:conv}
Assume~\eqref{eq:data2}--\eqref{eq:C} and~\eqref{eq:G1}--\eqref{eq:data1}. There exists a function $u\in\mathcal C_w\cap H^2(0,T;(U_0^D)')$ such that, up to a not relabeled subsequence
\begin{equation}\label{weak-conv}
u_n\xrightharpoonup[n\to \infty]{H^1(0, T;H)}u,\quad  u^\pm_n \xstararrow{n\to \infty}{L^\infty(0, T;U_T)}u,\quad   \tilde u_n \xrightharpoonup[n\to \infty]{H^1(0, T;(U_0^D)')}\dot u,\quad  \tilde{u}^\pm_n \xstararrow{n\to \infty}{L^\infty(0, T;H)}\dot{u},
\end{equation}
and for every $t\in[0,T]$
\begin{equation}\label{point-conv}
   u_n^\pm(t)\xrightharpoonup[n\to\infty]{U_T}u(t),\quad \tilde u_n^\pm(t)\xrightharpoonup[n\to\infty]{H}\dot u(t).
\end{equation}
\end{lemma}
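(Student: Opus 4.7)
The plan is to exploit the uniform bounds of Lemma~\ref{lem:reg_est} and estimate~\eqref{sec-der-bound}, extract weak and weak-$*$ cluster points via Banach--Alaoglu, then identify them and upgrade the convergence to the pointwise level. \textbf{Weak extraction.} By Lemma~\ref{lem:reg_est}, the interpolants $u_n^\pm$ are bounded in $L^\infty(0,T;U_T)$ and $\tilde u_n^\pm$ in $L^\infty(0,T;H)$, and by convex combination $u_n$ is bounded in $L^\infty(0,T;U_T)$ as well. Since $\dot u_n = \tilde u_n^+$ a.e., $u_n$ is moreover bounded in $W^{1,\infty}(0,T;H)\hookrightarrow H^1(0,T;H)$. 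The derivative $\dot{\tilde u}_n$ equals the step function $\delta^2 u_n^j$ on $((j-1)\tau_n, j\tau_n)$, so
\[
\int_0^T \|\dot{\tilde u}_n(t)\|_{(U_0^D)'}^2\,\de t = \sum_{j=1}^n \tau_n \|\delta^2 u_n^j\|_{(U_0^D)'}^2 \le \tilde C
\]
by~\eqref{sec-der-bound}, hence $\tilde u_n$ is bounded in $H^1(0,T;(U_0^D)')$. A single subsequence extraction via Banach--Alaoglu yields weak or weak-$*$ limits for all the sequences in~\eqref{weak-conv}.

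\textbf{Identification of limits.} On $[(i-1)\tau_n, i\tau_n]$ one has $\|u_n(t) - u_n^\pm(t)\|_H \le \tau_n \|\delta u_n^i\|_H \le C \tau_n$, so $u_n - u_n^\pm \to 0$ uniformly in $H$; via the continuous embedding $U_T\hookrightarrow H$, the weak-$*$ limits of $u_n^\pm$ in $L^\infty(0,T;U_T)$ must coincide with the weak $H^1(0,T;H)$-limit $u$ of $u_n$. An analogous estimate shows $\tilde u_n - \tilde u_n^\pm \to 0$ in $L^2(0,T;(U_0^D)')$; combined with $\dot u_n = \tilde u_n^+$, this identifies the limits of $\tilde u_n$ and $\tilde u_n^\pm$ with $\dot u$, and in particular gives $\dot u \in H^1(0,T;(U_0^D)')$, i.e.\ $u \in H^2(0,T;(U_0^D)')$. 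Applying Lemma~\ref{lem:wc} to $u \in L^\infty(0,T;U_T) \cap C^0([0,T];H)$ and to $\dot u \in L^\infty(0,T;H) \cap C^0([0,T];(U_0^D)')$ upgrades continuity to $u \in C_w^0([0,T];U_T)$ and $\dot u \in C_w^0([0,T];H)$.

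\textbf{Pointwise convergence and $u(t)\in U_t$.} Evaluation at $t$ is continuous from $H^1(0,T;H)$ into $H$, so $u_n(t) \rightharpoonup u(t)$ in $H$ for every $t$; the bound $\|u_n^\pm(t) - u_n(t)\|_H \le C\tau_n$ transfers this to $u_n^\pm(t)$. Boundedness of $\{u_n^\pm(t)\}$ in the reflexive space $U_T$ then promotes the convergence to weak $U_T$-convergence via a uniqueness-of-weak-limit argument: any subsequential $U_T$-weak limit of $u_n^\pm(t)$ must agree in $H$ with $u(t)$, so the full sequence converges weakly in $U_T$. The same reasoning applied to $\tilde u_n$, via continuous evaluation $H^1(0,T;(U_0^D)') \to (U_0^D)'$ and the $L^\infty(0,T;H)$ bound, gives $\tilde u_n^\pm(t) \rightharpoonup \dot u(t)$ in $H$. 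Finally, for $t \in [(i-1)\tau_n, i\tau_n)$ the element $u_n^-(t) = u_n^{i-1}$ belongs to $U_{(i-1)\tau_n} \subseteq U_t$ by the monotonicity (H3); since $U_t$ is closed and hence weakly closed in $U_T$, passing to the weak limit forces $u(t) \in U_t$, completing $u \in \mathcal C_w$.

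The main obstacle is the pointwise weak convergence in the non-standard space $U_T$: it does not follow automatically from weak-$*$ convergence in $L^\infty(0,T;U_T)$ and must be deduced by combining the pointwise $H$-convergence produced by $u_n \rightharpoonup u$ in $H^1(0,T;H)$ with a uniqueness-of-weak-limit argument in $U_T$. Equally subtle is the membership $u(t) \in U_t$: one is forced to use the ``lagged'' interpolant $u_n^-$ (rather than $u_n^+$), whose value at $t$ lies in $U_t$ thanks to the monotonicity (H3), so that weak closedness of $U_t$ inside $U_T$ applies.
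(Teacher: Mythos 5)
Your proof is correct and follows essentially the same route as the paper's: uniform bounds from Lemma~\ref{lem:reg_est} and~\eqref{sec-der-bound} feed into Banach--Alaoglu, the interpolants are identified by the $O(\tau_n)$ difference estimates, Lemma~\ref{lem:wc} supplies weak continuity, and $u(t)\in U_t$ is deduced from the lagged interpolant $u_n^-$ and the weak closedness of $U_t$ in $U_T$. The only place you should make explicit (and which the paper does state) is the \emph{pointwise} bound $\|\tilde u_n^\pm(t)-\tilde u_n(t)\|_{(U_0^D)'}^2 \le \tilde C\tau_n$ — the $L^2(0,T;(U_0^D)')$ estimate you quote in the identification step is not quite enough to transfer $\tilde u_n(t)\rightharpoonup\dot u(t)$ in $(U_0^D)'$ to $\tilde u_n^\pm(t)$ for every fixed $t$; that needs the pointwise version, which follows from $\|\delta^2 u_n^j\|_{(U_0^D)'}\le (\tilde C/\tau_n)^{1/2}$.
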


\begin{proof}
Thanks to Lemma~\ref{lem:reg_est} and the estimate~\eqref{sec-der-bound}, the sequences 
\begin{align*}
&\{u_n\}_n\subseteq L^\infty(0, T;U_T)\cap H^1(0, T;H),
& &\{\tilde u_n\}_n\subseteq L^\infty(0,T;H)\cap H^1(0,T;(U_0^D)'),\\
&\{u_n^\pm\}_n\subseteq L^\infty(0, T;U_T),
& &\{\tilde u_n^\pm\}_n\subseteq L^\infty(0,T;H),
\end{align*}
are uniformly bounded with respect to $n\in\mathbb N$.
By Banach-Alaoglu's theorem and Lemma~\ref{lem:wc} there exist two functions $u\in C^0_w([0,T];U_T)\cap H^1(0,T;H)$ and $v\in C^0_w([0,T];H)\cap H^1(0,T;(U_0^D)')$, such that, up to a not relabeled subsequence
\begin{alignat}{4}
&u_n \xrightharpoonup[n\to \infty]{H^1(0,T;H)}u,&& \quad u_n \xstararrow{n\to \infty}{L^\infty(0, T;U_T)}u,&&\quad \tilde u_n \xrightharpoonup[n\to \infty]{H^1(0, T;(U_0^D)')} v,&& \quad \tilde u_n \xstararrow{n\to \infty}{L^\infty(0, T;H)} v.\label{weak-conv-2}
\end{alignat}
Thanks to~\eqref{sec-der-bound} we get
\begin{align*}
\|\dot u_n-\tilde u_n\|_{L^2(0,T;(U_0^D)')}^2\leq \tilde C \tau_n^2\xrightarrow[n\to\infty]{}0,
\end{align*}
therefore we deduce that $v=\dot u$. Moreover, by using~\eqref{d-eq:est} and~\eqref{sec-der-bound} we have
\begin{align*}
&\|u_n^\pm-u_n\|_{L^\infty(0,T;H)}\le C \tau_n\xrightarrow[n\to\infty]{}0, \qquad\|\tilde u_n^\pm-\tilde u_n\|_{L^2(0,T;(U_0^D)')}^2\le \tilde C \tau_n^2\xrightarrow[n\to\infty]{}0.
\end{align*}
We combine the previous convergences with~\eqref{weak-conv-2} to derive
\begin{equation*}
u^\pm_n \xstararrow{n\to \infty}{L^\infty(0, T;U_T)} u,\qquad \tilde{u}^\pm_n \xstararrow{n\to \infty}{L^\infty(0, T;H)}\dot{u}.
\end{equation*}

By~\eqref{weak-conv-2} for every $t\in[0,T]$ we have
\begin{equation*}
u_n(t)\xrightharpoonup[n\to\infty]{U_T}u(t),\qquad \tilde u_n(t)\xrightharpoonup[n\to\infty]{H}\dot u(t).
\end{equation*}
Again, thanks to~\eqref{d-eq:est} and~\eqref{sec-der-bound}, for every $t\in[0,T]$ we get
\begin{align*}
&\|u_n^\pm(t)\|_{U_T}\le C,& &\|u_n^\pm(t)-u_n(t)\|_H\le C\tau_n\xrightarrow[n\to\infty]{}0,\\
&\|\tilde u_n^\pm(t)\|_{H}\le C,& &\|\tilde u_n^\pm(t)-\tilde u_n(t)\|_{(U_0^D)'}^2\le \tilde C\tau_n\xrightarrow[n\to\infty]{}0,
\end{align*}
which imply~\eqref{point-conv}. Finally, observe that for every $t\in[0,T]$
\begin{equation*}
 u_n^-(t)\in U_t,\qquad u_n^-(t)\xrightharpoonup[n\to\infty]{U_T}u(t).
\end{equation*}
Therefore, $u(t)\in U_t$ for every $t\in[0,T]$ since $U_t$ is a closed subspace of $U_T$. Hence, $u\in \mathcal C_w$.
\end{proof}

Let us check that the limit function $u$ defined before satisfies the boundary and initial conditions.

\begin{corollary}\label{in-con}
Assume~\eqref{eq:data2}--\eqref{eq:C} and~\eqref{eq:G1}--\eqref{eq:data1}. Then the function $u\in\mathcal C_w$ of Lemma~\ref{lem:conv} satisfies for every $t\in[0,T]$ the condition $u(t)-z(t)\in U_t^D$, and it assumes the initial conditions $u(0)=u^0$ in $U_0$ and $\dot u(0)=u^1$ in $H$. 
\end{corollary}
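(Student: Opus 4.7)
The plan is to exploit the pointwise weak convergences from Lemma~\ref{lem:conv}, together with the monotonicity $U_s^D\subset U_t^D$ whenever $s\le t$, which follows from hypothesis~(H3) ($\Gamma_s\subset\Gamma_t$). The key choice is to work with the left-continuous piecewise-constant interpolants $u_n^-$, $z_n^-$, rather than the right ones: this way the discrete membership is in $U_{(i-1)\tau_n}^D$, and $(i-1)\tau_n\le t$ gives an embedding into $U_t^D$ (while using $u_n^+$ would give $U_{i\tau_n}^D\supset U_t^D$, the wrong direction).

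First I would check the time-dependent Dirichlet condition. Fix $t\in[0,T]$ and let $i\in\{1,\dots,n\}$ be such that $t\in[(i-1)\tau_n,i\tau_n)$ (with the convention that $t=T$ gives $u_n^-(T)=u_n^n$). By construction $u_n^{i-1}-z_n^{i-1}\in U_{(i-1)\tau_n}^D$, and since $(i-1)\tau_n\le t$ we have $U_{(i-1)\tau_n}^D\subset U_t^D$. Hence $u_n^-(t)-z_n^-(t)\in U_t^D$ for every $n$. Since $z\in W^{1,1}(0,T;U_0)\hookrightarrow C^0([0,T];U_0)$, we have $z_n^-(t)\to z(t)$ strongly in $U_0$, hence in $U_T$; combined with the weak convergence $u_n^-(t)\rightharpoonup u(t)$ in $U_T$ from Lemma~\ref{lem:conv}, we deduce $u_n^-(t)-z_n^-(t)\rightharpoonup u(t)-z(t)$ in $U_T$. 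Since $U_t^D$ is a closed linear subspace of $U_T$, it is weakly closed, and therefore $u(t)-z(t)\in U_t^D$.

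The initial conditions then follow by evaluating the interpolants at $t=0$. By the choice of the starting data, $u_n^-(0)=u_n^0=u^0$ and $\tilde u_n^-(0)=\delta u_n^0=(u_n^0-u_n^{-1})/\tau_n=u^1$ for every $n$. By Lemma~\ref{lem:conv}, $u_n^-(0)\rightharpoonup u(0)$ in $U_T$ and $\tilde u_n^-(0)\rightharpoonup \dot u(0)$ in $H$; by uniqueness of the weak limit of the constant sequences, $u(0)=u^0$ and $\dot u(0)=u^1$. Since $u\in\mathcal C_w$ guarantees $u(0)\in U_0$, the first identity holds in $U_0$ and the second in $H$, as required.

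I do not anticipate a serious obstacle: the argument is a direct combination of pointwise weak convergence, continuity of $z$ in time, and the closedness of $U_t^D$ in $U_T$. The only care needed is the choice of the ``minus'' interpolants so that the discrete inclusion propagates to the right side, i.e.\ into $U_t^D$ and not into a strictly larger $U_s^D$ with $s>t$.
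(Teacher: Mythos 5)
Your proof is correct and follows essentially the same route as the paper: both deduce $u(t)-z(t)\in U_t^D$ by passing to the weak limit of $u_n^-(t)-z_n^-(t)\in U_t^D$ (using the continuity of $z$ and the weak closedness of the subspace), and both obtain the initial conditions by evaluating the interpolants at $t=0$ and invoking the pointwise weak convergence from Lemma~\ref{lem:conv}. Your additional remark explaining why the left interpolants $u_n^-$, $z_n^-$ (rather than $u_n^+$, $z_n^+$) give the correct inclusion $U_{(i-1)\tau_n}^D\subset U_t^D$ is a helpful clarification that the paper leaves implicit.
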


\begin{proof}
By~\eqref{weak-conv} we have
\begin{equation*}
u^0=u_n(0)\xrightharpoonup[n\to\infty]{U_T}u(0),\qquad u^1=\tilde u_n(0)\xrightharpoonup[n\to\infty]{H}\dot u(0).
\end{equation*}
Hence, $u\in \mathcal C_w$ satisfies $u(0)=u^0$ in $U_0$ and $\dot u(0)=u^1$ in $H$. Moreover, since $z\in C^0([0,T];U_0)$ and thanks to~\eqref{point-conv}, we have for every $t\in[0,T]$ 
\begin{equation*}
u_n^-(t)-z_n^-(t)\in U_t^D,\qquad u_n^-(t)-z_n^-(t)\xrightharpoonup[n\to\infty]{U_T}u(t)-z(t).
\end{equation*}
Thus, $u(t)-z(t)\in U_t^D$ for every $t\in[0,T]$ because $U_t^D$ is a closed subspace of $U_T$.
\end{proof} 

\begin{lemma}\label{lem:sol}
Assume~\eqref{eq:data2}--\eqref{eq:C} and~\eqref{eq:G1}--\eqref{eq:data1}. Then the function $u\in\mathcal C_w$ of Lemma~\ref{lem:conv} is a generalized solution to system~\eqref{eq:Gsystem}.
\end{lemma}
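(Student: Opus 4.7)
By Proposition~\ref{lem:equiv} it suffices to show that the function $u$ provided by Lemma~\ref{lem:conv} is a weak solution of~\eqref{eq:Gsystem}, i.e.\ that it satisfies~\eqref{eq:weaksol} for every $\varphi\in\Cc$. The plan is to take $v=\tau_n\varphi(j\tau_n)$ as test function in the discrete equation~\eqref{eq:un}, sum over $j=1,\dots,n$, recast each summand as an integral in terms of the piecewise interpolants $u_n^\pm$, $\tilde u_n^\pm$, $\G_n^\pm$ (and their analogues for $f$, $N$, and $\varphi$), and then pass to the limit using Lemma~\ref{lem:conv} together with the convergences of the data and of the tensor $\G$.

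Fix $\varphi\in\Cc$ and set $\varphi_n^j:=\varphi(j\tau_n)$. Since $\varphi(t)\in U_t^D$ for every $t\in[0,T]$, the choice $\varphi_n^j\in U_n^j$ is admissible in~\eqref{eq:un}; moreover $\varphi$ has compact support in $(0,T)$, so for $n$ large one has $\varphi_n^0=\varphi_n^n=0$ and all boundary contributions arising from discrete integration by parts vanish. Testing~\eqref{eq:un} with $\tau_n\varphi_n^j$ and summing over $j$, Abel summation converts the inertial term into $-\int_0^T(\tilde u_n^-(t),\dot\varphi_n^+(t))_H\,\de t$, where $\dot\varphi_n^+$ denotes the piecewise-constant interpolant of the differences $\delta\varphi_n^j$, which converges to $\dot\varphi$ uniformly on $[0,T]$ because $\varphi\in C^1$. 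The elastic term becomes $\int_0^T(\C eu_n^+(t),e\varphi_n^+(t))_H\,\de t$, the instantaneous viscoelastic piece becomes $\int_0^T(\G(0)(eu_n^+(t)-eu^0),e\varphi_n^+(t))_H\,\de t$, and the double sum is a discrete analogue of $\int_0^T\int_0^t(\dot\G(t-r)(eu_n^+(r)-eu^0),e\varphi_n^+(t))_H\,\de r\,\de t$, with $\dot\G(t-r)$ replaced by a piecewise-constant approximation that converges to it uniformly on $\{0\le r\le t\le T\}$ thanks to $\G\in C^2([0,T];B)$. The right-hand side becomes $\int_0^T(f_n^+(t),\varphi_n^+(t))_H\,\de t+\int_0^T(N_n^+(t),\varphi_n^+(t))_{H_N}\,\de t$.

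It only remains to pass to the limit as $n\to\infty$. From Lemma~\ref{lem:conv} I have $\tilde u_n^-\xstararrow{n\to\infty}{L^\infty(0,T;H)}\dot u$ and $eu_n^+\xstararrow{n\to\infty}{L^\infty(0,T;H)}eu$. Coupled with the uniform convergences $\varphi_n^+\to\varphi$ and $\dot\varphi_n^+\to\dot\varphi$ in $C^0([0,T];U_T^D)$, the strong convergences $f_n^+\to f$ in $L^2(0,T;H)$ and $N_n^+\to N$ in $C^0([0,T];H_N)$ (the latter via $N\in W^{1,1}(0,T;H_N)\hookrightarrow C^0([0,T];H_N)$), and the uniform approximation of $\dot\G(t-r)$, each term converges to the corresponding one in~\eqref{eq:weaksol}. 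Combined with the initial and boundary conditions established in Corollary~\ref{in-con}, this proves that $u$ is a weak solution, hence a generalized solution by Proposition~\ref{lem:equiv}.

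The main technical obstacle is the convolution contribution $\sum_{j=1}^n\sum_{k=1}^j\tau_n^2(\delta\G_n^{j-k}(eu_n^k-eu^0),e\varphi_n^j)_H$. To pass to the limit one has to combine the uniform convergence in $(t,r)$ of the piecewise-constant approximation of $\dot\G(t-r)$ (which follows from $\G\in C^2$ and control of the increments $\delta^2\G_n^j$) with the weak-$*$ convergence $eu_n^+\xstararrow{n\to\infty}{L^\infty(0,T;H)}eu$ and the uniform $L^\infty$-bound of Lemma~\ref{lem:reg_est}; since the kernel multiplier and the test function $e\varphi_n^+(t)$ converge strongly, the bilinear pairing passes to the limit and produces the expected double integral in~\eqref{eq:weaksol}.
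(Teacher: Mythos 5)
Your proof is essentially correct, but it takes a genuinely different route from the paper's, so it is worth spelling out the difference. You keep the discrete equation in its raw form, pass to the limit in the two viscoelastic contributions separately (the $\G(0)$ term and the $\delta\G_n$ double sum), and then invoke Proposition~\ref{lem:equiv} to pass from the weak formulation~\eqref{eq:weaksol} to the generalized one~\eqref{eq:Ggen}. The paper does the opposite: it first performs a discrete summation by parts on the convolution terms, using $\delta\G_n^0=0$ and $\varphi_n^0=\varphi_n^n=0$ to collapse the $\G_n^0$ term and the $\delta\G_n$ double sum into a single expression $-\sum_{j}\sum_{k\le j}\tau_n^2(\G_n^{j-k}(eu_n^k-eu^0),e\delta\varphi_n^{j+1})_H$, which is precisely a discrete analogue of the generalized form~\eqref{eq:Ggen}, and then passes to the limit in that one term directly. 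In effect the paper re-proves Proposition~\ref{lem:equiv} at the discrete level, whereas you invoke it once and for all at the continuum level.

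What the paper's version buys is that the only kernel approximation appearing in the limit is $\G_n^-(t_n-\,\cdot\,)\to\G(t-\,\cdot\,)$, which is uniform (indeed $\G\in C^2$), and one never touches $\dot\G$. Your version is also fine, but one small imprecision in your write-up should be flagged: you claim the piecewise-constant approximation of $\dot\G(t-r)$ converges \emph{uniformly} on $\{0\le r\le t\le T\}$. That is not quite true on the diagonal band $j=k$, because there the discrete scheme uses $\delta\G_n^0:=0$, which does not approximate $\dot\G(0)$ (generically non-zero, in fact non-positive by~\eqref{eq:G3}). The right statement is that this approximation converges to $\dot\G(t-r)$ in $L^1$ of the triangle (or uniformly off an $\mathcal O(\tau_n)$-measure neighbourhood of the diagonal), which is enough because $eu_n^+-eu^0$ and $e\varphi_n^+$ are uniformly bounded, so the contribution of the diagonal band is $\mathcal O(\tau_n)$ and vanishes. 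With that repair, your passage to the limit in the double-sum term is sound, and the remaining limits are handled as you describe; combined with Corollary~\ref{in-con} and Proposition~\ref{lem:equiv}, the conclusion follows.
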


\begin{proof}
We only need to prove that the function $u\in\mathcal C_w$ satisfies~\eqref{eq:Ggen}. We fix $n\in\mathbb N$ and a function $\varphi\in \Cc$. Let us consider
\begin{alignat*}{4}
&\varphi_n^j:=\varphi(j\tau_n)&&\qquad\text{for $j=0,\dots,n$},&&\qquad\delta \varphi_n^j:=\frac{\varphi_n^j-\varphi_n^{j-1}}{\tau_n}&&\qquad \text{for $j=1,\dots,n$},
\end{alignat*}
and, as we did before for the family $\{u_n^j\}_{j=1}^n$, we define the approximating sequences $\{\varphi_n^+\}_n$ and $\{\tilde\varphi_n^+\}_n$. If we use $\tau_n\varphi_n^j\in U_n^j$ as a test function in~\eqref{eq:un}, after summing over $j=1,...,n$, we get
\begin{align}\label{pass-limit}
\sum_{j=1}^n\tau_n(\delta^2u_n^j,\varphi^j_n)_H&+\sum_{j=1}^n\tau_n(\C eu_n^j,e\varphi^j_n)_{H}+\sum_{j=1}^n\tau_n(\G_n^0 (eu_n^j-eu^0),e\varphi^j_n)_{H}\nonumber\\
&+\sum_{j=1}^n\sum_{k=1}^j\tau^2_n(\delta \G_n^{j-k}(e u_n^k-eu^0),e \varphi_n^j)_{H}=\sum_{j=1}^n\tau_n(f_n^j,\varphi^j_n)_H+\sum_{j=1}^n\tau_n(N_n^j,\varphi^j_n)_{H_N}.
\end{align}
By means of a time discrete integration by parts we obtain
\begin{equation*}
\sum_{j=1}^n \tau_n(\delta^2 u^j_n,\varphi^j_n)_H
= -\sum_{j=1}^n\tau_n(\delta u^{j-1}_n,\delta \varphi^j_n)_H=-\int_0^T(\tilde{u}^-_n(t),\tilde\varphi_n^+(t))_H \,\de t,
\end{equation*}
and since $\delta \G_n^0=0$ and $\varphi_n^0=\varphi_n^n=0$ we get
\begin{align*}
&\sum_{j=1}^n\tau_n(\G_n^0 (eu_n^j-eu^0),e\varphi^j_n)_{H}+\sum_{j=1}^n\sum_{k=1}^j\tau^2_n(\delta \G_n^{j-k}(e u_n^k-eu^0),e \varphi_n^j)_{H}\\
&=-\sum_{j=1}^{n-1}\sum_{k=1}^j\tau_n^2 (\G_n^{j-k}(eu_n^k-eu^0),e\delta \varphi_n^{j+1})_H=-\int_0^{T-\tau_n}\hspace{-2pt}\int_0^{t_n}(\G_n^-(t_n-r)(eu_n^+(r)-eu^0),e\tilde\varphi_n^+(t+\tau_n))_H\,\de r\,\de t,
\end{align*}
where $t_n:=\left\lceil\frac{t}{\tau_n}\right\rceil\tau_n$ for $t\in(0,T)$ and $\lceil x\rceil$ is the superior integer part of the number $x$. Thanks to~\eqref{pass-limit} we deduce
\begin{align}\label{pass-limit-2}
&-\int_0^T(\tilde{u}^-_n(t),\tilde\varphi_n^+(t))_H \,\de t -\int_0^{T-\tau_n}\int_0^{t_n}(\G_n^-(t_n-r)(eu_n^+(r)-eu^0),e\tilde\varphi_n^+(t+\tau_n))_H\,\de r\,\de t\nonumber\\
&\hspace{2.5cm}+\int_0^T(\C eu_n^+(t),e\varphi_n^+(t))_H\,\de t=\int_0^T(f^+_n(t),\varphi^+_n(t))_H \,\de t+\int_0^T(N^+_n(t),\varphi^+_n(t))_{H_N} \,\de t.
\end{align}
We use~\eqref{weak-conv} and the following convergences 
\begin{align*}
&\varphi^+_n\xrightarrow[n\to\infty]{L^2(0,T;U_T)}\varphi, \quad \tilde\varphi_n^+\xrightarrow[n\to\infty]{L^2(0,T;H)}\dot{\varphi},\quad f^+_n\xrightarrow[n\to\infty]{L^2(0,T;H)}f, \quad N_n^+\xrightarrow[n\to\infty]{L^2(0,T;H_N)}N,
\end{align*}
to derive 
\begin{align*}
\int_0^T(\tilde{u}^-_n(t),\tilde\varphi_n^+(t))_H \,\de t&\xrightarrow[n\to\infty]{}\int_0^T(\dot u(t),\dot\varphi(t))_H \,\de t,\\
\int_0^T(\C eu_n^+(t),e\varphi_n^+(t))_H\,\de t &\xrightarrow[n\to\infty]{}\int_0^T(\C eu(t),e\varphi(t))_H\,\de t,\\
\int_0^T(f^+_n(t),\varphi^+_n(t))_H\,\de t&\xrightarrow[n\to\infty]{} \int_0^T(f(t),\varphi(t))_H\,\de t,\\
\int_0^T(N^+_n(t),\varphi^+_n(t))_{H_N} \,\de t&\xrightarrow[n\to\infty]{} \int_0^T(N(t),\varphi(t))_{H_N} \,\de t.
\end{align*}
Moreover, for every fixed $t\in(0,T)$
\begin{equation}\label{arg-1}
\chi_{[0,T-\tau_n]}(t)\chi_{[0,t_n]}(\,\cdot\,)\G_n^-(t_n-\,\cdot\,)e\tilde\varphi_n^+(t+\tau_n)\xrightarrow[n\to\infty]{L^2(0,T;H)}\chi_{[0,T]}(t)\chi_{[0,t]}(\,\cdot\,)\G(t-\,\cdot\,)e\dot\varphi(t),
\end{equation}
which together with~\eqref{weak-conv} gives
\begin{align}
\chi_{[0,T-\tau_n]}(t)\int_0^{t_n}(\G_n^-(t_n-r)(eu_n^+(r)&-eu^0),e\tilde\varphi_n^+(t+\tau_n))_H\,\de r\nonumber\\
&\xrightarrow[n\to\infty]{}\chi_{[0,T]}(t)\int_0^t(\G(t-r)(eu(r)-eu^0),e\dot\varphi(t))_H\,\de r.
\end{align}
By~\eqref{d-eq:est} for every $t\in(0,T)$ we deduce
\begin{equation}\label{arg-3}
\hspace{-3pt}\left|\chi_{[0,T-\tau_n]}(t)\int_0^{t_n}(\G_n^-(t_n-r)(eu_n^+(r)-eu^0),e\tilde\varphi_n^+(t+\tau_n))_H\,\de r\right|\le 2T\|\G\|_{C^0([0,T];B)} C\|e\dot\varphi\|_{C^0([0,T];H)}.
\end{equation}
Therefore, we can use the dominated convergence theorem to pass to the limit in the double integral of~\eqref{pass-limit-2}, and we obtain that $u$ satisfies~\eqref{eq:Ggen} for every function $\varphi\in \Cc$.
\end{proof}

Now we want to deduce an energy-dissipation inequality for the generalized solution $u\in \mathcal C_w$ of Lemma~\ref{lem:conv}. Let us define for every $t\in [0,T]$ the total energy $\mathcal E(t)$ and the dissipation $\mathcal D(t)$ as
\begin{align*}
&\mathcal{E}(t):=\frac{1}{2}\|\dot u(t)\|_H^2+\frac{1}{2}(\C e u(t),e u(t))_{H}+\frac{1}{2}(\G(t)(e u(t)-eu^0),e u(t)-eu^0)_{H}\nonumber\\
&\hspace{5.5cm}-\frac{1}{2}\int_0^t(\dot \G(t-r)(eu(t)-e u(r)),eu(t)-e u(r))_{H}\,\de r,\\
&\mathcal D(t):=-\frac{1}{2}\int_0^t(\dot \G(r)(e u(r)-eu^0),e u(r)-eu^0)_{H}\,\de r\nonumber\\
&\hspace{5.5cm}+\frac{1}{2}\int_0^t\int_0^r(\ddot \G(r-s)(e u(r)-e u(s)),e u(r)-e u(s))_{H}\,\de s\,\de r.
\end{align*}
Notice that $\mathcal {E}(t)$ is well defined for every time $t\in[0, T]$ since $u\in C_w^0([0, T];U_T)$ and $\dot u\in C_w^0([0, T];H)$. Moreover, by the initial conditions we have
$$\mathcal E(0)=\frac{1}{2}\|u^1\|_H^2+\frac{1}{2}(\C e u^0,e u^0)_{H}.$$

\begin{proposition}\label{prop:reg_enin}
Assume~\eqref{eq:data2}--\eqref{eq:C} and~\eqref{eq:G1}--\eqref{eq:data1}. Then the generalized solution $u\in \mathcal C_w$ to system~\eqref{eq:Gsystem}  of Lemma~\ref{lem:conv} satisfies for every $t\in[0,T]$ the following energy-dissipation inequality 
\begin{equation}\label{eq:enin}
\mathcal{E}(t)+\mathcal D(t)\leq \mathcal{E}(0)+\mathcal{W}_{tot}(t), 
\end{equation}
where the total work is defined as
\begin{align}\label{tot-work}
\mathcal W_{tot}(t):=&\int_0^t [(f(r),\dot u(r)-\dot z(r))_H-(\dot N(r),u(r)-z(r))_{H_N}-(\dot u(r),\ddot z(r))_H+(\C eu(r),e\dot z(r))_H]\,\de r\nonumber\\
& +(N(t),u(t)-z(t))_{H_N}-(N(0),u^0-z(0))_{H_N}+(\dot{u}(t),\dot{z}(t))_H -(u^1,\dot{z}(0))_H\nonumber\\
& +\int_0^t(\G(r)(e u(r)-eu^0),e \dot z(r))_{H}\,\de r+\int_0^t\int_0^r(\dot \G(r-s)(e u(s)-e u(r)),e \dot z(r))_{H}\,\de s\,\de r.
 \end{align}
\end{proposition}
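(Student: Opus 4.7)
The starting point is the discrete energy equality \eqref{d-en-bal-scomp} derived in the proof of Lemma~\ref{lem:reg_est}. Given $t\in[0,T]$, the plan is to choose for each $n$ an index $i_n\in\{0,\dots,n\}$ with $i_n\tau_n\to t$ and to pass to the limit in that identity. All four terms on the left-hand side of \eqref{d-en-bal-scomp} carrying a $\tau_n^2$ prefactor are non-negative by \eqref{eq:C} and by \eqref{eq:G2}--\eqref{eq:G3}, so I will drop them to obtain the discrete inequality
\begin{equation*}
\mathcal E_n(t)+\mathcal D_n(t)\le \mathcal E(0)+\sum_{j=1}^{i_n}\tau_n L_n^j,
\end{equation*}
where $\mathcal E_n(t)$ and $\mathcal D_n(t)$ are the discrete analogues of $\mathcal E(t)$ and $\mathcal D(t)$, built from the piecewise-constant interpolants $u_n^\pm$, $\tilde u_n^+$ and from $\G_n^\pm$. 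The goal is then to take $\liminf_{n\to\infty}$ on the left and $\lim_{n\to\infty}$ on the right.

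For the pointwise-in-$t$ pieces of $\mathcal E_n(t)$, I will combine the pointwise weak convergences \eqref{point-conv} with the weak lower semicontinuity of the non-negative quadratic forms $\xi\mapsto\|\xi\|_H^2$, $\xi\mapsto(\C\xi,\xi)_H$ and $\xi\mapsto(\G(t)\xi,\xi)_H$. The only mild subtlety is that the instantaneous-viscous contribution involves $\G_n^{i_n}$ instead of $\G(t)$; since $\G\in C^0([0,T];B)$ one has $\G_n^{i_n}\to\G(t)$ in $B$, which, together with the uniform bound on $\|eu_n^+(t)\|_H$ from Lemma~\ref{lem:reg_est}, lets me first replace $\G_n^{i_n}$ by $\G(t)$ modulo a vanishing error and then apply lower semicontinuity.

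The core of the argument lies in the three memory terms (the sum in $\delta\G_n^{i-j+1}$ contributing to $\mathcal E_n(t)$, the sum in $\delta\G_n^j$ and the double sum in $\delta^2\G_n^{j-k+1}$ contributing to $\mathcal D_n(t)$). By the averaging identities
\begin{equation*}
\delta\G_n^j\xi\cdot\xi=\dashint_{(j-1)\tau_n}^{j\tau_n}\dot\G(r)\xi\cdot\xi\,\de r, \qquad \delta^2\G_n^j\xi\cdot\xi=\dashint_{(j-1)\tau_n}^{j\tau_n}\dashint_{r-\tau_n}^{r}\ddot\G(s)\xi\cdot\xi\,\de s\,\de r,
\end{equation*}
each such sum rewrites as an honest (single or double) time integral of a measurable, pointwise non-negative quadratic form evaluated on piecewise-constant interpolants of $eu_n^+$. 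By \eqref{eq:G3}--\eqref{eq:G4} the forms $\xi\mapsto -(\dot\G(\cdot)\xi,\xi)_H$ and $\xi\mapsto(\ddot\G(\cdot)\xi,\xi)_H$ are convex at every fixed time, so I will apply weak lower semicontinuity pointwise in the time variables using \eqref{point-conv} and then Fatou's lemma in $r$ and in $(r,s)$, concluding
\begin{equation*}
\liminf_{n\to\infty}\big(\mathcal E_n(t)+\mathcal D_n(t)\big)\ge \mathcal E(t)+\mathcal D(t).
\end{equation*}

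For the right-hand side, a discrete integration by parts using \eqref{dis-part-1}--\eqref{dis-part-3} will rearrange $\sum_{j=1}^{i_n}\tau_n L_n^j$ into terms mimicking the structure of \eqref{tot-work}, with the discrete time-derivatives transferred onto $z$, $N$, or the kernel $\G$. Then the strong convergences of $f_n^+$, $N_n^+$, $\tilde N_n^+$, $z_n^\pm$, $\tilde z_n^+$, $\G_n^\pm$, $\tilde\G_n^+$ in their natural $L^p$-spaces, combined with \eqref{weak-conv}--\eqref{point-conv}, deliver the convergence of each piece to the corresponding term of $\mathcal W_{tot}(t)$. The most delicate contribution is the double sum $\sum_{j,k}\tau_n^2(\delta\G_n^{j-k}(eu_n^k-eu_n^j),e\delta z_n^j)_H$, which I will rewrite as a double integral of $\dot\G$ acting on piecewise-constant interpolants and pass to the limit by dominated convergence, exactly in the spirit of \eqref{arg-1}--\eqref{arg-3}. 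The main obstacle of the whole argument is the joint handling of weak convergence with Fatou's lemma for the memory dissipation; this is precisely why the conclusion is an inequality rather than the equality that held at the discrete level.
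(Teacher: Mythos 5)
Your proposal is correct and follows essentially the same route as the paper: start from the discrete identity \eqref{d-en-bal-scomp}, take $i_n=\lceil t/\tau_n\rceil$, drop the non-negative $\tau_n^2(\cdots)$ block of single sums, recognize the remaining $\delta\G_n$, $\delta^2\G_n$ sums as Riemann-type approximations of the memory integrals via the averaging identities, pass to the $\liminf$ on the left by pointwise weak lower semicontinuity plus Fatou, and pass to the limit of the approximate total work on the right via the discrete integrations by parts \eqref{dis-part-1}--\eqref{dis-part-3} and dominated convergence. The only cosmetic imprecision is the phrase ``all four terms carrying a $\tau_n^2$ prefactor'': the double sum $\frac12\sum_{j,k}\tau_n^2(\delta^2\G_n^{j-k+1}\cdots)$ also carries $\tau_n^2$ but must be kept (it becomes part of $\mathcal D$), whereas you clearly mean the four single sums inside the $\tfrac{\tau_n^2}{2}(\cdots)$ bracket.
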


\begin{proof}
Fixed $t\in (0, T]$ and $n\in\mathbb N$ there exists a unique $i=i(n)\in\{1,\dots,n\}$ such that $t\in((i-1)\tau_n,i\tau_n]$. In particular, $i(n)=\left\lceil\frac{t}{\tau_n}\right\rceil.$ After setting $t_n:=i\tau_n$ and using that $\delta \G_n^0=0$, we rewrite~\eqref{d-en-bal-scomp} as
\begin{align}\label{en-bal-scomp}
&\frac{1}{2}\|\tilde{u}_n^+(t)\|_H^2+\frac{1}{2}(\C e u_n^+(t),e u_n^+(t))_{H}+\frac{1}{2}(\G_n^+(t) (e u_n^+(t)-eu^0),e u_n^+(t)-eu^0)_{H}\nonumber\\
&\quad-\frac{1}{2}\int_0^{t_n}(\tilde\G_n^+(t_n-r)(e u_n^+(t)-e u_n^+(r)),e u_n^+(t)-e u_n^+(r))_{H}\,\de r\nonumber\\
&\quad+\frac{1}{2}\int_{\tau_n}^{t_n}\int_0^{r_n-\tau_n}(\dot{\tilde{\G}}_n(r_n-s)(e u_n^+(r)-e u_n^+(s)),e u_n^+(r)-e u_n^+(s))_{H}\,\de s\,\de r\nonumber\\
&\quad-\frac{1}{2}\int_0^{t_n}(\tilde\G_n^+(r)(e u_n^+(r)-eu^0),e u_n^+(r)-eu^0)_{H}\,\de r\leq\frac{1}{2}\|u^1\|_H^2+\frac{1}{2}(\C e u^0,e u^0)_{H}+\mathcal W_n^+(t),
\end{align}
where $r_n:=\left\lceil \frac{r}{\tau_n}\right\rceil\tau_n$ for $r\in(\tau_n,t_n)$, and the approximate total work $\mathcal W^+_n(t)$ is given by
\begin{align*}
\mathcal W^+_n(t):=&\int_0^{t_n}[(f_n^+(r),\tilde u_n^+(r)-\tilde z_n^+(r))_H+(N_n^+(r),\tilde u_n^+(r)-\tilde z_n^+(r))_{H_N}+(\dot{\tilde u}_n(r),{\tilde z}^+_n(r))_H]\,\de r\\
&+\int_0^{t_n}[(\mathbb Ceu_n^+(r),e\tilde z_n^+(r))_H+(\G_n^-(r)(eu_n^+(r)-eu^0),e\tilde{z}_n^+(r))_H]\,\de r\\
&+\int_{\tau_n}^{t_n}\int_0^{r_n-\tau_n}(\tilde{\G}_n^-(r_n-s)(eu_n^+(s)-eu_n^+(r)),e\tilde z_n^+(r))_H\,\de s\,\de r.
\end{align*}
By~\eqref{eq:C},~\eqref{eq:G2}, and~\eqref{point-conv} we derive
\begin{align}\label{lsc-first}
\| \dot{u}(t)\|^2_H&\leq\liminf_{n\to\infty}\| \tilde{u}_n^+(t)\|^2_H,\\
(\C eu(t),eu(t))_H&\leq \liminf_{n\to\infty} (\C eu_n^+(t),eu_n^+(t))_H,\\
(\G(t)(eu(t)-eu^0),eu(t)-eu^0)_H&\leq \liminf_{n\to\infty} (\G(t)(eu_n^+(t)-eu^0),eu_n^+(t)-eu^0)_H.\label{lsc-2}
\end{align}
Moreover, the estimate~\eqref{d-eq:est} imply
\begin{align*}
\left| ((\G(t)-\G_n^+(t))(eu_n^+(t)-eu^0),eu_n^+(t)-eu^0)_H\right|
\leq 4C^2\|\dot \G\|_{C^0([0,T];B)} \tau_n\xrightarrow[n\to \infty]{}0,
\end{align*}
which together with inequality~\eqref{lsc-2} gives
\begin{equation}
(\G(t)(e u(t)-eu^0),e u(t)-eu^0)_H \leq \liminf_{n\to\infty}(\G_n^+(t)(e u_n^+(t)-eu^0),e u_n^+(t)-eu^0)_H.
\end{equation}
By~\eqref{eq:G3} and~\eqref{point-conv}, for every $r\in(0,t)$ we have
\begin{align*}
(-\dot\G(t-r)(eu(t)-eu(r)),eu(t)-eu(r))_H\le\liminf_{n\to\infty}(-\dot\G(t-r)(eu_n^+(t)-eu_n^+(r)),eu_n^+(t)-eu_n^+(r))_H.
\end{align*}
Moreover
\begin{equation*}
\|\tilde\G_n^+(t_n-r)-\dot\G(t-r)\|_B\le\dashint_{t_n-r_n}^{t_n-r_n+\tau_n}\|\dot \G(s)-\dot\G(t-r)\|_B\,\de s\xrightarrow[n\to\infty]{} 0
\end{equation*}
because $t_n-r_n\to t-r$. Hence, we can argue as before to deduce
\begin{align*}
(-\dot\G(t-r)(eu(t)-eu(r))&,eu(t)-eu(r))_H\\
&\le\liminf_{n\to\infty}(-\tilde\G_n^+(t_n-r)(eu_n^+(t)-eu_n^+(r)),eu_n^+(t)-eu_n^+(r))_H.
\end{align*}
In particular, we can use Fatou's lemma and the fact that $t\le t_n$ to obtain
\begin{align*}
\int_0^t(-\dot\G(t-r)(eu(t)-eu(r))&,eu(t)-eu(r))_H\,\de r\\
&\le\liminf_{n\to\infty}\int_0^{t_n}(-\tilde\G_n^+(t_n-r)(eu_n^+(t)-eu_n^+(r)),eu_n^+(t)-eu_n^+(r))_H\,\de r.
\end{align*}
By arguing in a similar way, we can derive 
\begin{align*}
&\int_0^t(-\dot\G(r)(eu(r)-eu^0),eu(r)-eu^0)_H\,\de r\le\liminf_{n\to\infty}\int_0^{t_n}(-\tilde\G_n^+(r)(eu_n^+(r)-eu^0),eu_n^+(r)-eu^0)_H\,\de r.
\end{align*}

Let us consider the double integral in the left-hand side. We fix $r\in(0,t)$ and by~\eqref{eq:G4} for every $s\in(0,r)$ we have 
\begin{align*}
(\ddot\G(r-s)(eu(r)-eu(s))&,eu(r)-eu(s))_H\\
&\le\liminf_{n\to\infty}(\ddot\G(r-s)(eu_n^+(r)-eu_n^+(s)),eu_n^+(r)-eu_n^+(s))_H.
\end{align*}
Moreover, for a.e. $s\in (0,r_n-\tau_n)$ by defining $s_n:=\left\lceil \frac{s}{\tau_n}\right\rceil\tau_n$ we deduce
\begin{equation*}
\|\dot{\tilde\G}_n(r_n-s)-\ddot\G(r-s)\|_B\le\dashint_{r_n-s_n}^{r_n-s_n+\tau_n}\dashint_{\lambda-\tau_n}^\lambda\|\ddot \G(\theta)-\ddot\G(r-s)\|_B\,\de\theta\,\de \lambda\xrightarrow[n\to\infty]{} 0.
\end{equation*}
Therefore, for a.e. $s\in(0,r)$ we get
\begin{align*}
(\ddot\G(r-s)(eu(r)-eu(s))&,eu(r)-eu(s))_H\\
&\le\liminf_{n\to\infty}(\dot{\tilde\G}_n(r_n-s)(eu_n^+(r)-eu_n^+(s)),eu_n^+(r)-eu_n^+(s))_H,
\end{align*}
since $s\in (0,r_n-\tau_n)$ for $n$ large enough. If we apply again Fatou's lemma we conclude
\begin{align*}
\int_0^r(\ddot\G(r-s)(eu(r)-eu(s))&,eu(r)-eu(s))_H\,\de s\\
&\le\liminf_{n\to\infty}\int_0^r(\dot{\tilde\G}_n(r_n-s)(eu_n^+(r)-eu_n^+(s)),eu_n^+(r)-eu_n^+(s))_H\,\de s.
\end{align*}
By~\eqref{d-eq:est} we get
\begin{align*}
\left|\int_{r_n-\tau_n}^r(\dot{\tilde\G}_n(r_n-s)(eu_n^+(r)-eu_n^+(s)),eu_n^+(r)-eu_n^+(s))_H\,\de s\right|\le 4C^2\|\ddot \G\|_{C^0([0,T];B)}(r-r_n+\tau_n)\xrightarrow[n\to\infty]{}0,
\end{align*}
from which we derive
\begin{align*}
\int_0^r(\ddot\G(r-s)(eu(r)-eu(s))&,eu(r)-eu(s))_H\,\de s\\
&\le\liminf_{n\to\infty}\int_0^{r_n-\tau_n}(\dot{\tilde\G}_n(r_n-s)(eu_n^+(r)-eu_n^+(s)),eu_n^+(r)-eu_n^+(s))_H\,\de s.
\end{align*}
Since this is true for every $r\in(0,t)$, arguing as before we obtain
\begin{align*}
\int_0^t\int_0^r(\ddot\G(r-s)(eu(r)-eu&(s)),eu(r)-eu(s))_H\,\de s\,\de r\\
&\le\liminf_{n\to\infty}\int_{\tau_n}^{t_n}\int_0^{r_n-\tau_n}(\dot{\tilde\G}_n(r_n-s)(eu_n^+(r)-eu_n^+(s)),eu_n^+(r)-eu_n^+(s))_H\,\de s\,\de r.
\end{align*}

Let us study the right-hand side of~\eqref{en-bal-scomp}. Given that
\begin{align*}
\chi_{[0,t_n]}f^+_n&\xrightarrow[n\to\infty]{L^2(0, T;H)}\chi_{[0,t]}f, & \tilde{u}^+_n-\tilde{z}^+_n&\xrightharpoonup[n\to\infty]{L^2(0, T;H)}\dot{u}-\dot z, \\
\chi_{[0,t_n]}\G_n^-e\tilde{z}^+_n&\xrightarrow[n\to\infty]{L^1(0,T;H)}\chi_{[0,t]}\G e\dot{z}, &u_n^+&\xstararrow{n\to\infty}{L^\infty(0,T;U_T)}u,
\end{align*}
we can deduce
\begin{align}
\int_0^{t_n}(f^+_n(r),\tilde{u}^+_n(r)-\tilde{z}^+_n(r))_H\,\de r &\xrightarrow[n\to\infty]{} \int_0^t(f(r),\dot{u}(r)-\dot z(r))_H\,\de r,\\
\int_0^{t_n}(\C eu_n^+(r),e\tilde{z}_n^+(r))_H\,\de r &\xrightarrow[n\to\infty]{} \int_0^t (\C eu(r),e\dot z(r))_H\,\de r,\\
\int_0^{t_n}(\G_n^-(r)(eu_n^+(r)-eu^0),e\tilde{z}_n^+(r))_H\,\de r &\xrightarrow[n\to\infty]{} \int_0^t (\G(r) (eu(r)-eu^0),e\dot z(r))_H\,\de r .
\end{align}
By using the same argumentations of~\eqref{arg-1}--\eqref{arg-3}, together with the dominate convergence theorem, we can write
\begin{align}
 \int_{\tau_n}^{t_n}\int_0^{r_n-\tau_n}(\tilde{\G}_n^-(r_n-s)(eu_n^+(s)-eu_n^+(r))&,e\tilde z_n^+(r))_H\,\de s \,\de r\nonumber\\
 &\xrightarrow[n\to\infty]{}\int_0^{t}\int_0^r(\dot{\G}(r-s)(eu(s)-eu(r)),e\dot z(r))_H\,\de s\,\de r.
\end{align} 
Thanks to the discrete integration by parts formulas~\eqref{dis-part-1}--\eqref{dis-part-3} we have
\begin{align*}
& \int_0^{t_n}(\dot{\tilde u}_n(r),\tilde z_n^+(r))_H\,\de r = (\tilde u_n^+(t),\tilde z_n^+(t))_H -(u^1,\dot z(0))_H- \int_0^{t_n}(\tilde u_n^-(r),\dot{\tilde z}_n(r))_H\,\de r ,\\
& \int_0^{t_n}(N_n^+(r),\tilde u_n^+(r)-\tilde z_n^+(r))_{H_N}\,\de r = (N_n^+(t),u_n^+(t)- z_n^+(t))_{H_N}-(N(0),u^0- z(0))_{H_N}\\
&\hspace{8cm}- \int_0^{t_n}(\tilde N_n^+(r),u_n^-(r)- z_n^-(r))_{H_N}\,\de r.
\end{align*}
By arguing as before we deduce
\begin{align}
& \int_0^{t_n}(\dot{\tilde u}_n(r),\tilde z_n^+(r))_H\,\de r \xrightarrow[n\to \infty]{} (\dot u(t),\dot z(t))_H-(u^1,\dot z(0))_H- \int_0^{t}(\dot u(r),{\ddot z}(r))_H\,\de r ,\\
& \int_0^{t_n}(N_n^+(r),\tilde u_n^+(r)-\tilde z_n^+(r))_{H_N}\,\de r \nonumber\\
&\hspace{1cm}\xrightarrow[n\to \infty]{}(N(t),u(t)- z(t))_{H_N}-(N(0),u^0- z(0))_{H_N}- \int_0^{t}(\dot N(r),u(r)- z(r))_{H_N}\,\de r,\label{cont-last}
\end{align}
thanks to Lemma~\ref{lem:conv} and to the following convergences:
\begin{align*}
&\|\tilde{z}_n^+(t)-\dot{z}(t)\|_H
\leq\dashint_{t_n-\tau_n}^{t_n}\|\dot{z}(r)-\dot{z}(t)\|_H\,\de r\xrightarrow[n \to \infty]{}0,\\
&\|z_n^+(t)-z(t)\|_{H_N}
\leq C_{tr}\sqrt{\tau_n}\|\dot z\|_{L^2(0,T;U_0)}\xrightarrow[n \to \infty]{}0,\\
&\|N_n^+(t)-N(t)\|_{H_N}
\leq \int_{t}^{t_n}\|\dot N(s)\|_{H_N}\,\de s\xrightarrow[n \to \infty]{}0,
\end{align*}
and 
\begin{align*}
\chi_{[0,t_n]}\dot{\tilde z}_n&\xrightarrow[n\to\infty]{L^1(0,T;H)}\chi_{[0,t]}\ddot{z}, & \tilde{u}_n^-&\xstararrow{n\to\infty}{L^\infty(0, T;H)}\dot u, \\
\chi_{[0,t_n]}{\tilde N}^+_n&\xrightarrow[n\to\infty]{L^1(0,T;H_N)}\chi_{[0,t]}\dot N, & u_n^--z_n^{-}&\xstararrow{n\to\infty}{L^\infty(0, T;U_T)}u-z.
\end{align*}
By combining~\eqref{en-bal-scomp} with~\eqref{lsc-first}--\eqref{cont-last} we deduce the energy-dissipation inequality~\eqref{eq:enin} for every $t\in(0,T]$. Finally, for $t=0$ the inequality trivially holds since $u(0)=u^0$ in $U_0$ and $\dot u(0)=u^1$ in $H$. 
\end{proof}

\begin{remark}
From the classical point of view, the total work on the solution $u$ at time $t\in[0, T]$ is given by
\begin{equation}\label{classic-work}
 \mathcal W^C_{tot}(t):=\mathcal W_{load}(t)+\mathcal W_{bdry}(t),
\end{equation}
where $\mathcal W_{load}(t)$ is the work on the solution $u$ at time $t\in[0, T]$ due to the loading term, which is defined as
\begin{equation*}
 \mathcal W_{load}(t):=\int_0^t(f(r), \dot u(r))_H\,\de r,
\end{equation*}
and $\mathcal W_{bdry}(t)$ is the work on the solution $u$ at time $t\in[0, T]$ due to the varying boundary conditions, which one expects to be equal to
\begin{align*}
\mathcal W_{bdry}(t):=\int_0^t(N(r),\dot u(r))_{H_N}\,\de r+\int_0^t(\C eu(r)\nu+\left(\frac{\de}{\de r}\int_0^r \G(r-s)(eu(s)-eu^0)\de s\right)\nu,\dot z(r))_{H_D}\,\de r.
\end{align*}
Unfortunately, $\mathcal W_{bdry}(t)$ is not well defined under our assumptions on $u$. In particular, the term involving the Dirichlet datum $z$ is difficult to handle since the trace of the function $\C eu(r)\nu+\frac{\de}{\de r}\left(\int_0^r \G(r-s)eu(s)\de s\right)\nu$ on $\partial_D\Omega$ is not well defined. If we assume that $u\in L^2(0,T;H^2(\Omega\setminus\Gamma;\R^d))\cap H^2(0,T;L^2(\Omega\setminus\Gamma;\R^d))$ and that $\Gamma$ is a smooth manifold, then the first term of $\mathcal W_{bdry}(t)$ makes sense and satisfies
\begin{align*}
\int_0^t(N(r),\dot u(r))_{H_N}\,\de r=(N(t), u(t))_{H_N}-(N(0), u(0))_{H_N}- \int_0^t(\dot N(r), u(r))_{H_N}\,\de r.
\end{align*}
Moreover, we have
\begin{align}\label{eq:Geu}
\frac{\de}{\de r}\int_0^r \G(r-s)(eu(s)-eu^0)\,\de s&=\G(0)(eu(r)-eu^0)+\int_0^r \dot{\G}(r-s)(eu(s)-eu^0)\,\de s\nonumber\\
&=\G(r)(eu(r)-eu^0)+\int_0^r \dot{\G}(r-s)(eu(s)-eu(r))\,\de s,
\end{align}
therefore $\left(\frac{\de}{\de r}\int_0^r \G(r-s)(eu(s)-eu^0)\de s\right)\nu\in L^2(0,T;H_D)$. By using~\eqref{eq:Gsystem}, together with the divergence theorem and the integration by parts formula, we derive
\begin{align}\label{eq:wbound}
&\int_0^t(\C eu(r)\nu+\left(\frac{\de}{\de r}\int_0^r \G(r-s)(eu(s)-eu^0)\,\de s\right)\nu,\dot z(r))_{H_D}\,\de r\nonumber\\
&=\int_0^t(\C eu(r)+\frac{\de}{\de r}\int_0^r \G(r-s)(eu(s)-eu^0)\,\de s,e \dot z(r))_H\de r\nonumber\\
&\quad+\int_0^t\left[(\div\left(\C eu(r)+\frac{\de}{\de r}\int_0^r \G(r-s)(eu(s)-eu^0)\,\de s\right),\dot z(r))_H-(N(r),\dot z(r))_{H_N}\right]\de r\nonumber\\
&=\int_0^t\left[(\C eu(r)+\frac{\de}{\de r}\int_0^r \G(r-s)(eu(s)-eu^0)\,\de s,e\dot z(r))_H+(\ddot u(r)-f(r),\dot z(r))_H-(N(r),\dot z(r))_{H_N}\right]\de r\nonumber\\
&=\int_0^{t}\left[(\C eu(r)+\frac{\de}{\de r}\int_0^r \G(r-s)(eu(s)-eu^0)\,\de s,e\dot z(r))_H-(f(r),\dot z(r))_H\right]\de r\nonumber\\
&\quad+\int_0^{t}\left[(\dot N(r),z(r))_{H_N}-(\dot u(r),\ddot z(r))_H\right]\de r+(\dot{u}(t),\dot{z}(t))_H -(u^1,\dot{z}(0))_H-(N(t),z(t))+(N(0),z(0)).
\end{align}
Therefore, by~\eqref{eq:Geu} and~\eqref{eq:wbound} we deduce the definition of total work given in~\eqref{tot-work} is coherent with the classical one~\eqref{classic-work}. 
\end{remark}

We conclude this subsection by showing that the generalized solution of Lemma~\ref{lem:conv} satisfies the initial conditions in a stronger sense than the ones stated in Definition~\ref{gen-sol}.

\begin{lemma}\label{in-data}
Assume~\eqref{eq:data2}--\eqref{eq:C} and~\eqref{eq:G1}--\eqref{eq:data1}. Then the generalized solution $u\in\mathcal C_w$ to system~\eqref{eq:Gsystem} of Lemma~\ref{lem:conv} satisfies
\begin{equation}\label{eq:incon}
\lim_{t\to 0^+}\|u(t)-u^0\|_{U_T}=0,\quad\lim_{t\to 0^+}\|\dot u(t)-u^1\|_H=0.
\end{equation}
In particular, the functions $u\colon [0, T]\to U_T$ and $\dot u\colon [0, T]\to H$ are continuous at $t=0$.
\end{lemma}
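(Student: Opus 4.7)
My plan is to derive the strong continuity at $t=0$ from the energy-dissipation inequality~\eqref{eq:enin} combined with weak continuity. Observe that $\mathcal{E}(t)$ is the sum of four non-negative pieces (the kinetic and elastic terms, plus the two memory terms, which are non-negative by~\eqref{eq:G2} and~\eqref{eq:G3}), and that $\mathcal{D}(t)\ge 0$ by~\eqref{eq:G3} and~\eqref{eq:G4}. Hence from~\eqref{eq:enin}
$$\limsup_{t\to 0^+}\Bigl[\tfrac{1}{2}\|\dot u(t)\|_H^2+\tfrac{1}{2}(\C eu(t),eu(t))_H\Bigr]\le \limsup_{t\to 0^+}\mathcal{E}(t)\le \mathcal{E}(0)+\limsup_{t\to 0^+}\mathcal{W}_{tot}(t).$$

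The first step is to check that $\mathcal{W}_{tot}(t)\to 0$ as $t\to 0^+$. All the integrals from $0$ to $t$ vanish by absolute continuity of the integrals of $L^1$-functions (note the integrands lie in $L^1(0,T)$ thanks to $f\in L^2(0,T;H)$, $\dot u\in L^\infty(0,T;H)$, $\ddot z\in L^1(0,T;H)$, $e\dot z\in L^1(0,T;H)$, and the uniform bound on $eu$). For the boundary terms, $N\in W^{1,1}(0,T;H_N)\hookrightarrow C^0([0,T];H_N)$ and $\dot z\in W^{1,1}(0,T;H)\hookrightarrow C^0([0,T];H)$ give strong continuity of $N(t)$ and $\dot z(t)$; combined with weak continuity of $u(t)$ in $U_T$ (hence in $H_N$ by continuity of the trace) and $\dot u(t)$ in $H$, and with the initial conditions $u(0)=u^0$, $\dot u(0)=u^1$, one obtains $(N(t),u(t)-z(t))_{H_N}\to (N(0),u^0-z(0))_{H_N}$ and $(\dot u(t),\dot z(t))_H\to (u^1,\dot z(0))_H$. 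Therefore $\mathcal{W}_{tot}(t)\to 0$.

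Writing $a(t):=\tfrac{1}{2}\|\dot u(t)\|_H^2$ and $b(t):=\tfrac{1}{2}(\C eu(t),eu(t))_H$, with $a_0:=\tfrac{1}{2}\|u^1\|_H^2$, $b_0:=\tfrac{1}{2}(\C eu^0,eu^0)_H$, the previous paragraph yields $\limsup_{t\to 0^+}[a(t)+b(t)]\le a_0+b_0$. On the other hand, weak lower semicontinuity of the norm on $H$ (respectively, of the equivalent norm induced by $\C$) together with the weak continuity $\dot u(t)\rightharpoonup u^1$ in $H$ and $eu(t)\rightharpoonup eu^0$ in $H$ gives $\liminf_{t\to 0^+} a(t)\ge a_0$ and $\liminf_{t\to 0^+} b(t)\ge b_0$. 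Combining these with $\lim_{t\to 0^+}[a(t)+b(t)]=a_0+b_0$ forces $\lim_{t\to 0^+} a(t)=a_0$ and $\lim_{t\to 0^+} b(t)=b_0$ separately: indeed if, along some sequence $t_n\to 0^+$, $a(t_n)\to L>a_0$, then $b(t_n)\to a_0+b_0-L<b_0$, contradicting the liminf bound on $b$.

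Weak convergence together with convergence of the norm in a Hilbert space implies strong convergence. Thus $\|\dot u(t)-u^1\|_H\to 0$, which is the second limit in~\eqref{eq:incon}. Similarly, expanding
$$(\C(eu(t)-eu^0),eu(t)-eu^0)_H=(\C eu(t),eu(t))_H-2(\C eu(t),eu^0)_H+(\C eu^0,eu^0)_H\xrightarrow[t\to 0^+]{}0,$$
and using the coercivity~\eqref{eq:C}, we obtain $\|eu(t)-eu^0\|_H\to 0$. Finally, $u\in H^1(0,T;H)\hookrightarrow C^0([0,T];H)$ (from Lemma~\ref{lem:conv}) yields $\|u(t)-u^0\|_H\to 0$. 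Adding the two contributions gives $\|u(t)-u^0\|_{U_T}^2=\|u(t)-u^0\|_H^2+\|eu(t)-eu^0\|_H^2\to 0$, which completes the proof. The only mildly delicate point is the separation of the joint limit of $a(t)+b(t)$ into the two individual limits; everything else is a routine combination of weak continuity, coercivity, and the energy-dissipation inequality.
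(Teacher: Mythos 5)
Your proof is correct and follows essentially the same route as the paper: pass to the limit $t\to 0^+$ in the energy-dissipation inequality, use nonnegativity of the memory terms to reduce to the kinetic and elastic energies, combine the $\limsup$ bound from the inequality with the $\liminf$ bound from weak continuity to obtain norm convergence, and conclude strong convergence in $H$ (for $\dot u$) and via coercivity of $\C$ (for $eu$). You are merely a bit more explicit than the paper about why $\mathcal{W}_{tot}(t)\to 0$ and about the splitting of the joint limit into the two individual ones.
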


\begin{proof}
By sending $t\rightarrow0^+$ into the energy-dissipation inequality~\eqref{eq:enin} and using that $u\in C_w^0([0, T];U_T)$, $\dot u\in C_w^0([0, T];H)$, and the lower semicontinuity of the real functions 
$$t\mapsto \norm{\dot u(t)}^2_H,\quad t\mapsto (\C eu(t),eu(t))_H,$$
we deduce
\begin{align*}
\mathcal E(0)&\le\frac{1}{2}\liminf_{t\to 0^+}\norm{\dot u(t)}_H^2+\frac{1}{2}\liminf_{t\to 0^+}(\C eu(t), eu(t))_H\nonumber\\
&\leq \frac{1}{2}\limsup_{t\to 0^+}\norm{\dot u(t)}_H^2+\frac{1}{2}\liminf_{t\to 0^+}(\C eu(t), eu(t))_H\leq\limsup_{t\to 0^+}\left[\frac{1}{2}\norm{\dot u(t)}_H^2+\frac{1}{2}(\C eu(t), eu(t))_H\right]\le \mathcal{E}(0)
\end{align*}
because the right-hand side of~\eqref{eq:enin} is continuous in $t$, $u(0)=u^0$ in $U_0$ and $\dot u(0)=u^1$ in $H$. This gives
\begin{equation*}
 \lim_{t\to 0^+}\norm{\dot u(t)}_H^2=\norm{u^1}_H^2,
\end{equation*}
and in a similar way, we can also obtain
\begin{equation*}
\lim_{t\to 0^+}(\C eu(t),eu(t))_H=(\C eu^0,eu^0)_H.
\end{equation*}
Since
$$\dot u(t)\xrightharpoonup[t\to 0^+]{H}u^1,\quad eu(t)\xrightharpoonup[t\to 0^+]{H}eu^0,$$ 
and $u\in C^0([0, T];H)$, we deduce~\eqref{eq:incon}.
\end{proof}

By combining the previous results together we obtain the following existence result for the system~\eqref{eq:Gsystem}.

\begin{theorem}\label{thm:reg_exis}
Assume~\eqref{eq:data2}--\eqref{eq:C} and~\eqref{eq:G1}--\eqref{eq:data1}. Then there exists a generalized solution $u\in \mathcal C_w$ to system~\eqref{eq:Gsystem}. Moreover, we have $u\in H^2(0,T;(U_0^D)')$ and it satisfies the energy-dissipation inequality~\eqref{eq:enin} and
\begin{equation*}
\lim_{t\to 0^+}\|u(t)- u^0\|_{U_T}=0,\quad\lim_{t\to 0^+}\|\dot u(t)-u^1\|_H=0.
\end{equation*}
\end{theorem}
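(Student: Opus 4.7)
The statement is the culmination of everything already established in this subsection, so the proof is essentially a bookkeeping argument that assembles the previously proved intermediate results. The plan is to produce the generalized solution as the weak limit of the time-discretized approximations and then invoke the auxiliary lemmas to verify each clause of the conclusion in turn.

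First I would recall the construction: for each $n\in\mathbb N$, Lax-Milgram's lemma applied to~\eqref{eq:un} yields the family $\{u_n^j\}_{j=0}^n$ and hence the interpolants $u_n,u_n^\pm,\tilde u_n,\tilde u_n^\pm$ as defined above. Lemma~\ref{lem:reg_est} provides the uniform bound on $\max_j\|\delta u_n^j\|_H+\max_j\|e u_n^j\|_H$, and the Corollary immediately following it upgrades this to the uniform control $\sum_{j=1}^n\tau_n\|\delta^2 u_n^j\|_{(U_0^D)'}^2\le\tilde C$. With these uniform bounds in hand, Lemma~\ref{lem:conv} extracts (up to a subsequence) a limit function
\begin{equation*}
u\in\mathcal C_w\cap H^2(0,T;(U_0^D)'),
\end{equation*}
together with all the weak/weak* convergences and the pointwise-in-time weak convergences~\eqref{point-conv} that will be needed.

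Next I would verify the three defining properties of a generalized solution for this limit $u$. Corollary~\ref{in-con} shows that $u(t)-z(t)\in U_t^D$ for every $t\in[0,T]$ and that $u(0)=u^0$ in $U_0$ and $\dot u(0)=u^1$ in $H$, using the pointwise weak convergences in Lemma~\ref{lem:conv} together with the closedness of $U_t^D$ in $U_T$. Lemma~\ref{lem:sol} then shows that $u$ satisfies the variational identity~\eqref{eq:Ggen} for every $\varphi\in\Cc$; this is the step where the discrete equation~\eqref{eq:un} is tested against $\tau_n\varphi_n^j$, summed, rewritten via a discrete summation by parts, and passed to the limit using the compactness of Lemma~\ref{lem:conv} plus a dominated-convergence argument on the memory term (bounded uniformly via~\eqref{d-eq:est}). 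Together these give $u\in\mathcal C_w$ a generalized solution to~\eqref{eq:Gsystem}.

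Finally I would invoke Proposition~\ref{prop:reg_enin} to obtain the energy-dissipation inequality~\eqref{eq:enin} for this $u$, and Lemma~\ref{in-data} to upgrade the initial conditions to the strong limits
\begin{equation*}
\lim_{t\to 0^+}\|u(t)-u^0\|_{U_T}=0,\qquad \lim_{t\to 0^+}\|\dot u(t)-u^1\|_H=0,
\end{equation*}
which relies on sending $t\to 0^+$ in~\eqref{eq:enin} and using lower semicontinuity of the quadratic energy terms together with $u\in C_w^0([0,T];U_T)$ and $\dot u\in C_w^0([0,T];H)$. The $H^2(0,T;(U_0^D)')$ regularity of $u$ is already part of Lemma~\ref{lem:conv}, so all the clauses of the theorem are covered. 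Since every substantial analytical difficulty (the uniform estimate in Lemma~\ref{lem:reg_est}, the passage to the limit in the memory term in Lemma~\ref{lem:sol}, and the discrete energy inequality underlying Proposition~\ref{prop:reg_enin}) has already been addressed in the preceding lemmas, there is no remaining obstacle; the proof is a short paragraph that cites these results in sequence.
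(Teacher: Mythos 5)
Your proposal is correct and follows exactly the same route as the paper: the paper's proof of this theorem is a one-line combination of Lemma~\ref{lem:conv}, Corollary~\ref{in-con}, Lemma~\ref{lem:sol}, Proposition~\ref{prop:reg_enin}, and Lemma~\ref{in-data}, which is precisely the assembly you describe, just stated more tersely.
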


\begin{proof}
It is enough to combine Lemma~\ref{lem:conv}, Corollary~\ref{in-con}, Lemma~\ref{lem:sol}, Proposition~\ref{prop:reg_enin}, and Lemma~\ref{in-data}.
\end{proof}

\subsection{Uniform energy estimates}

In this subsection we show that, under the stronger assumption~\eqref{eq:dataFz} on $z$, the generalized solution to~\eqref{eq:Gsystem} of Theorem~\ref{thm:reg_exis} satisfies some uniform estimates which depends on $\G$ only via $\|\G\|_{L^1(0,T;B)}$.

\begin{lemma}\label{lem:irr_est}
Assume~\eqref{eq:dataFz}--\eqref{eq:C} and~\eqref{eq:G1}--\eqref{eq:G4}. Let $u$ be the generalized solution to system~\eqref{eq:Gsystem} of Theorem~\ref{thm:reg_exis}. Then there exists a constant $M=M(z,N,f,u^0,u^1,\C,\|\G\|_{L^1(0,T;B)})$ such that
\begin{equation}\label{eq:L1Gest}
 \|\dot u(t)\|_H+\|eu(t)\|_H\le M\quad\text{for every $t\in[0,T]$}.
\end{equation}
\end{lemma}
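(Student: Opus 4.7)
The plan is to start from the energy-dissipation inequality~\eqref{eq:enin} satisfied by the solution $u$ of Theorem~\ref{thm:reg_exis} and to rework its right-hand side so that every $\G$-dependent contribution is controlled solely via $\|\G\|_{L^1(0,T;B)}$. Since~\eqref{eq:G2}--\eqref{eq:G4} render each of the three $\G$-summands in $\mathcal E(t)$ non-negative and $\mathcal D(t)\ge 0$, one first drops them and invokes the coercivity~\eqref{eq:C} to obtain
\[
\frac{1}{2}\|\dot u(t)\|_H^2 + \frac{\gamma}{2}\|eu(t)\|_H^2 \le \mathcal E(0) + \mathcal W_{tot}(t).
\]
All non-$\G$ terms of $\mathcal W_{tot}(t)$ in~\eqref{tot-work} are then bounded by Cauchy--Schwarz, trace inequalities, and integration by parts on the summands involving $\ddot z$ and $\dot N$, exactly as in the proof of Lemma~\ref{lem:reg_est}; those bounds are already $\G$-independent, and the strengthened assumption~\eqref{eq:dataFz} only makes them more comfortable.

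The whole task thus reduces to controlling the two $\G$-dependent terms in~\eqref{tot-work}. Writing $F(r):=eu(r)-eu^0$ (so $F(0)=0$) and
\[
\Sigma(t) := \int_0^t(\G(r)F(r), e\dot z(r))_H\,\de r + \int_0^t\!\int_0^r(\dot\G(r-s)(F(s)-F(r)), e\dot z(r))_H\,\de s\,\de r,
\]
I split $F(s)-F(r)$ in the double integral and use $\int_0^r\dot\G(r-s)\,\de s = \G(r)-\G(0)$: the $\G(r)F(r)$ contribution cancels the first integral of $\Sigma$ and leaves a single $\G(0)$-term plus $\int_0^t\!\int_0^r(\dot\G(r-s)F(s), e\dot z(r))_H\,\de s\,\de r$. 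In this last double integral I swap the order of integration and integrate by parts in $r$ via $\dot\G(r-s)=\partial_r\G(r-s)$; thanks to the stronger hypothesis $z\in W^{2,1}(0,T;U_0)$, $e\dot z$ is absolutely continuous with $e\ddot z\in L^1(0,T;H)$, so the integration by parts is legitimate and its $r=s$ boundary contribution is precisely $-\int_0^t(\G(0)F(s), e\dot z(s))_H\,\de s$, which cancels the earlier $\G(0)$-term. The resulting clean identity reads
\[
\Sigma(t) = \left(\int_0^t \G(t-s)F(s)\,\de s,\; e\dot z(t)\right)_H - \int_0^t\!\int_0^r(\G(r-s)F(s), e\ddot z(r))_H\,\de s\,\de r,
\]
in which neither $\dot\G$ nor $\G(0)$ appears, immediately giving
\[
|\Sigma(t)| \le \|\G\|_{L^1(0,T;B)}\bigl(\|eu\|_{L^\infty(0,T;H)}+\|eu^0\|_H\bigr)\bigl(\|e\dot z\|_{C^0([0,T];H)}+\|e\ddot z\|_{L^1(0,T;H)}\bigr).
\]

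Setting $K := \sup_{t\in[0,T]}\|\dot u(t)\|_H$ and $E := \sup_{t\in[0,T]}\|eu(t)\|_H$ and taking the supremum of the energy-dissipation inequality in $t$, the combination of the bound on $\Sigma(t)$ with the standard estimates for the remaining terms of $\mathcal W_{tot}(t)$ yields an inequality of the form
\[
\frac{1}{2}K^2 + \frac{\gamma}{2}E^2 \le C_1(1+K+E) + C_2\|\G\|_{L^1(0,T;B)}(1+E),
\]
with $C_1, C_2$ depending only on $z, N, f, u^0, u^1, \C$; a Young-type absorption of $K$ and $E$ on the left then forces $K+E\le M$ for some $M=M(z,N,f,u^0,u^1,\C,\|\G\|_{L^1(0,T;B)})$, which is exactly~\eqref{eq:L1Gest}. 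The delicate point is the double cancellation of $\G(0)$ in the derivation of $\Sigma(t)$: without it the estimate would inherit a $\|\G\|_{L^\infty}$ dependence, which is precisely what must be avoided if the bound is to survive the approximation of the singular kernel $\F$ of~\eqref{eq:Fdef} in Section~\ref{sec:irr}.
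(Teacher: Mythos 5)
Your proof is correct and follows essentially the same route as the paper: dropping the non-negative $\G$-terms in $\mathcal E(t)$ and $\mathcal D(t)$, and then reworking the two $\G$-dependent contributions of $\mathcal W_{tot}(t)$ into the identity
\[
\Sigma(t)=\int_0^t(\G(t-r)(eu(r)-eu^0),e\dot z(t))_H\,\de r-\int_0^t\!\int_0^r(\G(r-s)(eu(s)-eu^0),e\ddot z(r))_H\,\de s\,\de r,
\]
in which only $\|\G\|_{L^1(0,T;B)}$ survives. The paper arrives at this same identity slightly more abstractly, by observing that $p(t)=\int_0^t(\G(t-r)(eu(r)-eu^0),e\dot z(t))_H\,\de r$ is absolutely continuous and writing $p(t)-p(0)=\int_0^t\dot p(r)\,\de r$; your explicit Fubini-plus-integration-by-parts computation is the same calculation unpacked, so the two arguments coincide.
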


\begin{proof}
We define
\begin{equation*}
K:=\sup_{t\in[0,T]}\|\dot u(t)\|_H=\|\dot u\|_{L^\infty(0,T;H)},\quad E:=\sup_{t \in[0,T]}\| e u(t)\|_{H}=\|eu\|_{L^\infty(0,T;H)}.
\end{equation*}
Notice that $K$ and $E$ are well-posed since $u\in C_w^0([0,T];U_T)$ and $\dot u\in C_w^0([0,T];H)$. Let us estimate the total work $\mathcal W_{tot}(t)$ in~\eqref{eq:enin} by means of $K$ and $E$.
Since
\begin{equation*}
 \|u(t)\|_{U_T}\le \|u^0\|_H+TK+E\quad\text{for every $t\in[0,T]$},
\end{equation*}
we have
\begin{align*}
\left|\int_0^t(f(r),\dot u(r))_H\,\de r \right|&\le \sqrt{T}\|f\|_{L^2(0,T;H)}K,\\
\left|\int_0^t(\dot N(r), u(r))_{H_N}\,\de r \right|&\le C_{tr}\|\dot N\|_{L^2(0,T;H_N)}\left(\|u^0\|_H+TK+E\right),\\
|(N(t),u(t))_{H_N}|&\le C_{tr}\|N\|_{C^0([0,T];H_N)}\left(\|u^0\|_H+TK+E\right),\\
|(N(0),u^0)_{H_N}|&\le C_{tr}\|N\|_{C^0([0,T];H_N)}\left(\|u^0\|_H+TK+E\right),\\
\left|\int_0^t(f(r),\dot z(r))_H\,\de r \right|&\le \sqrt{T}\|f\|_{L^2(0,T;H)}\|\dot z\|_{C^0([0,T];H)},\\
\left|\int_0^t(N(r), \dot z(r))_{H_N}\,\de r \right|&\le C_{tr}\|N\|_{C^0([0,T];H_N)}\|\dot z\|_{L^1(0,T;U_0)},\\
\left|\int_0^t(\C eu(r),e\dot z(r))_H\,\de r\right|&\le \|\C\|_B\|e\dot z\|_{L^1(0,T;H)}E,\\
\left|\int_0^t(\dot u(r),\ddot z(r))_H\,\de r \right|&\le \|\ddot z\|_{L^1(0,T;H)}K,\\
|(\dot u(t),\dot z(t))_H|&\le\|\dot z\|_{C^0([0,T];H)}K,\\
|(u^1,\dot z(0))_H|&\le \|\dot z\|_{C^0([0,T];H)}K.
\end{align*}
It remains to study the last two terms, which are
\begin{align*}
&\int_0^t(\G(r) (e u(r)-eu^0), e \dot z(r))_{H}\,\de r +\int_0^t\int_0^ r (\dot \G(r -s)(e u(s)- e u(r)), e \dot z(r))_{H}\,\de s\,\de r \\
&=\int_0^t(\G(0) (e u(r)-eu^0), e \dot z(r))_{H}\,\de r +\int_0^t\int_0^ r (\dot \G(r -s) (e u(s)-eu^0), e \dot z(r))_{H}\,\de s\,\de r .
\end{align*}
Since $z\in W^{2,1}(0,T;U_0)$, arguing as in Proposition~\ref{lem:equiv} we can deduce that the function
\begin{equation*}
p(t):=\int_0^t(\G(t- r)(e u(r)-eu^0), e \dot z(t))_{H}\,\de r
\end{equation*}
is absolutely continuous on $[0,T]$. In particular
\begin{equation*}
p(t)-p(0)=\int_0^t\dot p(r)\,\de r,
\end{equation*}
which gives
\begin{align}
&\int_0^t(\G(r) (e u(r)-eu^0), e \dot z(r))_{H}\,\de r +\int_0^t\int_0^ r (\dot \G(r -s) (e u(s)-eu(r)), e \dot z(r))_{H}\,\de s\,\de r \nonumber\\
&=\int_0^t(\G(t- r) (e u(r)-eu^0) , e \dot z(t))_{H}\,\de r-\int_0^t\int_0^ r(\G(r -s)(e u(s)-eu^0), e \ddot z(r))_{H}\,\de s\,\de r .\label{eq:totwork}
\end{align}
Hence, we deduce
\begin{align*}
&\left|\int_0^t(\G(r) (e u(r)-eu^0), e \dot z(r))_{H}\,\de r +\int_0^t\int_0^ r (\dot \G(r -s)(e u(s)- e u(r)), e \dot z(r))_{H}\,\de s\,\de r \right|\\
&\hspace{5.3cm}\le 2(\|e\dot z\|_{C^0([0,T];H)}+\| e \ddot z\|_{L^1(0,T;H)})\|\G\|_{L^1(0,T;B)}E.
\end{align*}

Therefore, since
\begin{equation*}
\mathcal E(0)\le \frac{1}{2}\|u^1\|_H^2+\frac{1}{2}\|\C\|_B\|e u^0\|_H^2,
\end{equation*}
by~\eqref{eq:enin} we deduce the following estimate
\begin{align*}
&\|\dot u(t)\|_H^2+\gamma\| e u(t)\|_{H}^2\le C_0+C_1K+C_2E\quad\text{for every $t\in[0,T]$},
\end{align*}
where 
\begin{equation*}
C_0=C_0(z,N,f,u^0,u^1,\C),\quad C_1=C_1(f,z,N),\quad C_2=C_2(z,N,\C,\|\G\|_{L^1(0,T;B)}).
\end{equation*}
In particular, being the right-hand side independent of $t\in[0,T]$, we conclude
\begin{equation*}
K^2+\gamma E^2\le 2C_0+2C_1K+2C_2E\quad\text{for every $t\in[0,T]$}.
\end{equation*}
This implies the existence of a constant $M=M(C_0,C_1,C_2)$ for which~\eqref{eq:L1Gest} is satisfied.
\end{proof}

\begin{remark}
By the previous estimate, we can easily derive a uniform bound also for $\dot u$ in $H^1(0,T;(U_0^D)')$, which unfortunately depends on $\G$ via $\|\G(0)\|_B$. Indeed, let us assume that $z$, $N$, $f$, $u^0$, $u^1$, $\C$, and $\G$ satisfy~\eqref{eq:dataFz}--\eqref{eq:C} and~\eqref{eq:G1}--\eqref{eq:G4} and let $u$ be the generalized solution of Theorem~\ref{thm:reg_exis}. Thanks to~\eqref{eq:enin} and~\eqref{eq:L1Gest} there exists a constant $\overline M=\overline M(z,N,f,u^0,u^1,\C,\|\G\|_{L^1(0;T;B)})$ such that for every $t\in[0,T]$
\begin{equation*}
\|eu(t)\|_H^2+(\G(t)(e u(t)-eu^0),eu(t)-eu^0)_{H}+\int_0^t(-\dot \G(t- r)(e u(t)- e u(r)), e u(t)- e u(r))_{H}\,\de r \le \overline M.
\end{equation*}
By equation~\eqref{eq:Ggen} it is easy to see that $\dot u\in H^1(0,T;(U_0^D)')$ and that $\ddot u$ satisfies for a.e. $t\in(0,T)$ and for every $v\in U_0^D$
\begin{align*}
|\langle \ddot u(t),v\rangle_{(U^D_0)'}|&\le \|\C\|_B\|eu(t)\|_H\|ev\|_H+\sqrt{(\G(t)(eu(t)-eu^0), e u(t)-eu^0)_{H}}\sqrt{(\G(t) e v, e v)_{H}}\\
&\quad+\sqrt{\int_0^t(-\dot\G(t- r)(eu(t)-eu(r)), eu(t)-eu(r))_{H}\,\de r }\sqrt{\int_0^t(-\dot \G(t- r) e v, e v)_{H}\,\de r }\\
&\quad+\|f(t)\|_H\|v\|_H+\|N(t)\|_{H_N}\|v\|_{H_N}.
\end{align*}
Hence, we derive
\begin{align*}
|\langle \ddot u(t),v\rangle_{(U^D_0)'}|^2
&\le 5\overline M\|ev\|_H^2+5\overline M(\G(t) e v, e v)_{H}+5\overline M\int_0^t(-\dot \G(t- r) e v, e v)_{H}\,\de r\\
&\quad+5\|f(t)\|_H^2\|v\|_H^2+5C^2_{tr}\|N(t)\|_{H_N}^2\|v\|_{U_0}^2\\
&=5\overline M\|ev\|_H^2+5\overline M(\G(0) e v, e v)_{H}+5\|f(t)\|_H^2\|v\|_H^2+5C^2_{tr}\|N(t)\|_{H_N}^2\|v\|_{U_0}^2,
\end{align*}
which gives
\begin{equation*}
\|\ddot u\|_{L^2(0,T;(U_0^D)')}^2\le 5\overline MT+5\overline MT \|\G(0)\|_B+5\|f\|_{L^2(0,T;H)}^2+5C^2_{tr}\|N\|_{L^2(0,T;H_N)}^2.
\end{equation*}
Therefore the bounds on $\ddot u$ depends on $\|\G(0)\|_B$ even when $z\in W^{2,1}(0,T;U_0)$. 
\end{remark}

As explained in the previous remark, we can not deduce a uniform bound for $\dot u$ in $H^1(0,T;(U_0^D)')$ depending on $\G$ only via its $L^1$-norm. On the other hand, the bound on $\dot u$ in $H^1(0,T;(U_0^D)')$ is useful if we want to prove the existence of a generalized solution $u^*$ to the fractional Kelvin-Voigt system~\eqref{eq:Fsystem}, especially to show that $\dot u^*\in C_w^0([0,T];H)$. To overcome this problem, we introduce another function that is related to $\dot u$ and for which is possible to derive a uniform bound. Let us consider the auxiliary function $\alpha\colon [0,T]\to (U_0^D)'$ defined as
\begin{align*}
\langle \alpha(t),v\rangle_{(U_0^D)'}:=(\dot u(t),v)_H+\int_0^t(\G(t-r) (e u(r)-eu^0), e v)_{H}\,\de r \quad\text{for every $v\in U_0^D$ and $t\in[0,T]$}. 
\end{align*}
Notice that $ \alpha \in C_w^0([0,T];(U_0^D)')$. Indeed, given $t^*\in[0,T]$ and $$\{t_k\}_k\subset[0,T] \quad \text{ such that } \quad t_k\xrightarrow[k\to\infty]{} t^*,$$
we have for every $v\in U_0^D$ the following convergence
\begin{align*}
\langle \alpha (t_k),v\rangle_{(U_0^D)'}&=(\dot u(t_k),v)_H+\int_0^{t_k}(\G(t_k- r) (e u(r)-eu^0), e v)_{H}\,\de r \\
&\xrightarrow[k\to\infty]{} (\dot u(t^*),v)_H+\int_0^{t^*}(\G(t^*- r) (e u(r)-eu^0), e v)_{H}\,\de r =\langle \alpha (t^*),v\rangle_{(U_0^D)'},
\end{align*}
since 
$$\dot u(t_k)\xrightharpoonup[k\to \infty]{H} \dot u(t^*),\quad \int_0^{t_k}(\G(t_k- r) (e u(r)-eu^0), e v)_{H}\,\de r\xrightarrow[k\to\infty]{} \int_0^{t^*}(\G(t^*- r) (e u(r)-eu^0), e v)_{H}\,\de r.$$ 
The second convergence is true because
\begin{align*}
\int_0^{t_k}(\G(t_k- r)& (e u(r)-eu^0), e v)_{H}\,\de r 
\\
&=\int_0^{t^*}(e u(r)-eu^0,\G(t_k- r) e v)_{H}\,\de r -\int_{t_k}^{t^*}(e u(r)-eu^0,\G(t_k- r) e v)_{H}\,\de r .
\end{align*}
Clearly 
$$\G(t_k-\,\cdot\,) e v\xrightarrow[k\to\infty]{L^1(0,t^*;H)} \G(t^*-\,\cdot\,) e v$$ 
while $ e u\in L^\infty(0,t^*;H)$. Therefore
\begin{align*}
\int_0^{t^*}(e u(r)-eu^0,\G(t_k- r) e v)_{H}\,\de r \xrightarrow[k\to \infty]{} &\int_0^{t^*}(e u(r)-eu^0,\G(t^*- r) e v)_{H}\,\de r\\ 
&=\int_0^{t^*}(\G(t^*- r) (e u(r)-eu^0) , e v)_{H}. 
\end{align*}
Moreover
\begin{align*}
\left|\int_{t_k}^{t^*}(e u(r)-eu^0,\G(t_k- r) e v)_{H}\,\de r \right|\le 2M\| e v\|_{H}\left|\int_0^{t_k-t^*}\|\G(r)\|_B\,\de r \right|\xrightarrow[k\to\infty]{} 0.
\end{align*}

For this function $\alpha$ is possible to find a uniform bound in $H^1(0,T;(U_0^D)')$ which depends on $\|\G\|_{L^1(0,T;B)}$. 

\begin{corollary}\label{coro:irr_est}
Assume~\eqref{eq:dataFz}--\eqref{eq:C} and~\eqref{eq:G1}--\eqref{eq:G4}. Then the function $\alpha\in H^1(0,T;(U_0^D)')$ and there exists a constant $\tilde M=\tilde M(z,N,f,u^0,u^1,\C,\|\G\|_{L^1(0,T;B)})$ such that
\begin{equation}\label{eq:alpha}
 \|\alpha\|_{H^1(0,T;(U_0^D)')}\le \tilde M.
\end{equation}
\end{corollary}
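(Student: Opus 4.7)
The plan is to obtain the bound in two steps, exploiting the fact that the pair $(\dot u, $ convolution with $\G)$ appearing in the definition of $\alpha$ is exactly the combination that gets ``differentiated in time'' in the weak formulation \eqref{eq:Ggen}.

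First I would establish the pointwise bound $\|\alpha(t)\|_{(U_0^D)'} \le C$ uniformly in $t\in[0,T]$, where $C$ depends only on the data and on $\|\G\|_{L^1(0,T;B)}$. For every $v\in U_0^D$ with $\|v\|_{U_0}\le 1$, Lemma~\ref{lem:irr_est} gives
\begin{equation*}
|(\dot u(t),v)_H| \le M,\qquad \left|\int_0^t(\G(t-r)(eu(r)-eu^0),ev)_H\,\de r\right| \le 2M\|\G\|_{L^1(0,T;B)},
\end{equation*}
so $\alpha \in L^\infty(0,T;(U_0^D)')$ with a norm controlled by $M(1+2\|\G\|_{L^1(0,T;B)})$.

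Next I would identify the distributional derivative $\dot\alpha$. The key observation is that if $\varphi \in C^1_c(0,T;U_0^D)$, then $\varphi(t)\in U_0^D \subseteq U_t^D$ for every $t\in[0,T]$, so $\varphi \in \mathcal C^1_c$ and is admissible as a test function in \eqref{eq:Ggen}. Recognizing the first and third terms of \eqref{eq:Ggen} as $-\int_0^T \langle\alpha(t),\dot\varphi(t)\rangle_{(U_0^D)'}\,\de t$, the equation rewrites as
\begin{equation*}
-\int_0^T \langle \alpha(t),\dot\varphi(t)\rangle_{(U_0^D)'}\,\de t = \int_0^T\bigl[-(\C eu(t),e\varphi(t))_H + (f(t),\varphi(t))_H + (N(t),\varphi(t))_{H_N}\bigr]\de t.
\end{equation*}
This says that $\alpha$ admits a distributional derivative given, for a.e.\ $t\in(0,T)$ and every $v\in U_0^D$, by
\begin{equation*}
\langle \dot\alpha(t),v\rangle_{(U_0^D)'} = -(\C eu(t),ev)_H + (f(t),v)_H + (N(t),v)_{H_N}.
\end{equation*}

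Finally, I would estimate this expression: using $\|eu(t)\|_H \le M$ from Lemma~\ref{lem:irr_est} and the trace inequality $\|v\|_{H_N}\le C_{tr}\|v\|_{U_0}$, we get
\begin{equation*}
\|\dot\alpha(t)\|_{(U_0^D)'} \le \|\C\|_B M + \|f(t)\|_H + C_{tr}\|N(t)\|_{H_N}.
\end{equation*}
Squaring and integrating over $(0,T)$ yields an $L^2$-bound on $\dot\alpha$ depending only on the data and on $M$ (hence only on $\|\G\|_{L^1(0,T;B)}$ through $\G$), which combined with the first step gives \eqref{eq:alpha}.

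I do not expect a serious obstacle here: the whole point of introducing $\alpha$ is that $\dot\alpha$ equals $\div(\C eu) + f$ plus the Neumann term, quantities whose $L^2(0,T;(U_0^D)')$-norm is controlled by $M$, $\|f\|_{L^2(0,T;H)}$ and $\|N\|_{L^2(0,T;H_N)}$ independently of $\G(0)$. The only small care to take is verifying that functions in $C^1_c(0,T;U_0^D)$ are admissible test functions in \eqref{eq:Ggen}, which is immediate from $U_0^D\subseteq U_t^D$ for all $t$ (a consequence of (H3)).
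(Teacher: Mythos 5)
Your proof is correct and follows essentially the same route as the paper: the same pointwise bound on $\alpha$ from Lemma~\ref{lem:irr_est}, the same identification of $\dot\alpha$ from the weak formulation~\eqref{eq:Ggen}, and the same final $L^2$ estimate; the only cosmetic difference is that you test with general $\varphi\in C^1_c(0,T;U_0^D)$ while the paper uses separable tests $\psi(t)v$ with $\psi\in C^1_c(0,T)$ and $v\in U_0^D$, which is immaterial.
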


\begin{proof}
First, by Lemma~\ref{lem:irr_est} we have
\begin{equation*}
\| \alpha (t)\|_{(U_0^D)'}\le M(1+2\|\G\|_{L^1(0,T;B)})\quad\text{for every $t\in[0,T]$}.
\end{equation*}
Moreover, by the definition of generalized solution, we deduce that for every $\psi\in C_c^1(0,T)$ and $v\in U_0^D$ it holds
\begin{equation*}
-\int_0^T\langle \alpha (t),v\rangle_{(U_0^D)'}\dot \psi(t)\,\de t=-\int_0^T(\C eu(t),ev)_H\psi(t)\,\de t+\int_0^T(f(t),v)_H\psi(t)\,\de t+\int_0^T(N(t),v)_{H_N}\psi(t)\,\de t.
\end{equation*}
This gives that there exists $\dot \alpha \in L^2(0,T;(U_0^D)')$ and 
\begin{equation*}
\langle \dot \alpha (t),v\rangle_{(U_0^D)'}=-(\C eu(t),ev)_H+(f(t),v)_H+(N(t),v)_{H_N}\quad\text{for every $v\in U_0^D$ and for a.e. $t\in(0,T)$}. 
\end{equation*}
In particular, $ \alpha \in C^0([0,T];(U_0^D)')$ and
\begin{equation*}
 \|\dot \alpha \|_{L^2(0,T;(U_0^D)')}^2\le 3M^2T\|\C\|_B^2+ 3\|f\|_{L^2(0,T;H)}^2+3C_{tr}^2\|N\|_{L^2(0,T;H_N)}^2,
\end{equation*}
which gives~\eqref{eq:alpha}.
\end{proof}


\section{The fractional Kelvin-Voigt's model}\label{sec:irr}

In this section we prove the existence of a generalized solution to~\eqref{eq:Fsystem} for a tensor $\F$ which is not necessary bounded at $t=0$, as it happens in~\eqref{eq:Fdef}. Here, we assume that our data $z,N,f,u^0,u^1,\C$, and $\F$ satisfy the conditions~\eqref{eq:dataFz}--\eqref{eq:F4}. To prove the existence of a generalized solution to~\eqref{eq:Fsystem} under these assumptions, we first regularize $\F$ by a parameter $\epsilon>0$ and we consider system~\eqref{eq:Gsystem} associated to this regularization. Then, we take the solution $u^\epsilon$ given by Theorem~\ref{thm:reg_exis} and thanks to Lemma~\ref{lem:irr_est} and Corollary~\ref{coro:irr_est} we obtain a generalized solution to~\eqref{eq:Fsystem}.

Let us regularize $\F$ by defining
\begin{equation*}
\G^{\epsilon}(t):=\F\left(t+\epsilon\right)\quad\text{for $t\in[0,T]$ and $\epsilon\in(0,\delta_0)$}.
\end{equation*}
Clearly $\G^{\epsilon}$ satisfies~\eqref{eq:G1}--\eqref{eq:G4}. Moreover, we have $\G^{\epsilon}\to \F$ in $L^1(0,T;B)$ since $\F\in L^1(0,T+\delta_0;B)$. For every fixed $\epsilon\in(0,\delta_0)$ we can consider the generalized solution $u^\epsilon$ to system~\eqref{eq:Gsystem} with $\G$ replaced by $\G^\epsilon$ of Theorem~\ref{thm:reg_exis}. By Lemma~\ref{lem:irr_est} and Corollary~\ref{coro:irr_est} we deduce the following compactness result:

\begin{lemma}\label{lem:ir-con}
Assume~\eqref{eq:dataFz}--\eqref{eq:F4}. For every $\epsilon\in (0,\delta_0)$ let $u^\epsilon$ be the generalized solution associated to system~\eqref{eq:Gsystem} with $\G$ replaced by $\G^\epsilon$ given by Theorem~\ref{thm:reg_exis}. Then there exists a function $u^*\in \mathcal C_w$ and a subsequence of $\epsilon$, not relabeled, such that
\begin{align}\label{ep-weak-conv}
& u^{\epsilon}\xrightharpoonup[\epsilon\to 0^+]{L^2(0,T;U_T)} u^*,\quad \dot u^{\epsilon}\xrightharpoonup[\epsilon\to 0^+]{L^2(0,T;H)} \dot u^*,
\end{align}
and for every $t\in[0,T]$
\begin{align}\label{ep-point-conv}
 & u^{\epsilon}(t)\xrightharpoonup[\epsilon\to 0^+]{U_T} u^*(t),\quad \dot u^{\epsilon}(t)\xrightharpoonup[\epsilon\to 0^+]{H} \dot u^*(t).
\end{align}
Moreover, $u^*(0)=u^0$ in $U_0$, $\dot u^*(0)=u^1$ in $H$, and $u^*(t)-z(t)\in U_t^D$ for every $t\in[0,T]$.
\end{lemma}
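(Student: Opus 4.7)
The strategy is to exploit the two uniform estimates of the previous subsection to extract weakly convergent subsequences of $\{u^\epsilon\}$, and then identify the limit as an element of $\mathcal C_w$ satisfying the required boundary and initial conditions.

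First, the change of variable $s=t+\epsilon$ gives $\|\G^\epsilon\|_{L^1(0,T;B)}\le\|\F\|_{L^1(0,T+\delta_0;B)}$ uniformly in $\epsilon\in(0,\delta_0)$. Lemma~\ref{lem:irr_est} then yields a constant $M$ independent of $\epsilon$ such that
\begin{equation*}
\|\dot u^\epsilon(t)\|_H+\|eu^\epsilon(t)\|_H\le M\quad\text{for every }t\in[0,T],
\end{equation*}
so $\{u^\epsilon\}$ is uniformly bounded in $L^\infty(0,T;U_T)\cap W^{1,\infty}(0,T;H)$. Banach--Alaoglu delivers a first subsequence with $u^\epsilon\rightharpoonup u^*$ in $L^2(0,T;U_T)$ and $\dot u^\epsilon\rightharpoonup \dot u^*$ in $L^2(0,T;H)$, which gives~\eqref{ep-weak-conv}.

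For the pointwise weak convergences~\eqref{ep-point-conv} two separate arguments are needed. The equi-Lipschitz property of $\{u^\epsilon\}$ from $[0,T]$ into $H$, together with the equi-bound in $U_T$, yields equi-continuity in the (metrizable on bounded sets) weak topology of $U_T$; an Arzel\`a--Ascoli / diagonal argument on a countable dense subset of $[0,T]$ then produces a further subsequence with $u^\epsilon(t)\rightharpoonup u^*(t)$ in $U_T$ for every $t\in[0,T]$. The analogous argument cannot be run directly for $\dot u^\epsilon$, since the available bounds on $\ddot u^\epsilon$ depend on $\|\G^\epsilon(0)\|_B=\|\F(\epsilon)\|_B$, which blows up as $\epsilon\to 0^+$. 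This is precisely why the auxiliary function $\alpha^\epsilon$ was introduced: Corollary~\ref{coro:irr_est} makes $\{\alpha^\epsilon\}$ uniformly bounded in $H^1(0,T;(U_0^D)')$, hence equi-H\"older into $(U_0^D)'$, so a further extraction gives $\alpha^\epsilon(t)\to \alpha^*(t)$ in $(U_0^D)'$ for every $t\in[0,T]$. For each fixed $t$, the bound $\|\dot u^\epsilon(t)\|_H\le M$ and reflexivity of $H$ yield a subsequential weak $H$-limit; passing to the limit in
\begin{equation*}
\spr{\alpha^\epsilon(t)}{v}_{(U_0^D)'}=(\dot u^\epsilon(t),v)_H+\int_0^t(\G^\epsilon(t-r)(eu^\epsilon(r)-eu^0),ev)_H\,\de r\quad\text{for }v\in U_0^D,
\end{equation*}
where the convolution term converges thanks to $\G^\epsilon\to\F$ in $L^1(0,T;B)$ and the already established weak $U_T$-convergence of $eu^\epsilon(r)$, identifies this limit with $\dot u^*(t)$. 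Uniqueness of the limit then makes the extraction independent of $t$.

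The remaining claims are routine. The initial conditions $u^*(0)=u^0$ and $\dot u^*(0)=u^1$ follow from $u^\epsilon(0)=u^0$, $\dot u^\epsilon(0)=u^1$ and~\eqref{ep-point-conv} at $t=0$. Since $U_t$ and $U_t^D$ are weakly closed subspaces of $U_T$, the memberships $u^\epsilon(t)\in U_t$ and $u^\epsilon(t)-z(t)\in U_t^D$ transfer to $u^*(t)$. Finally, to obtain $u^*\in\mathcal C_w$ I combine $u^*\in L^\infty(0,T;U_T)$ with the strong continuity of $u^*\colon[0,T]\to H$ (inherited from the uniform Lipschitz bound) via Lemma~\ref{lem:wc}; the same lemma applied to $\dot u^*\in L^\infty(0,T;H)$, together with the weak continuity of $\dot u^*$ into $(U_0^D)'$ inherited from $\alpha^*$ through the identification above, gives $\dot u^*\in C_w^0([0,T];H)$. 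The main subtlety is the passage to the limit in the convolution term: because $\F$ is unbounded at $0$, no pointwise control of $\G^\epsilon$ near $r=t$ is available, and the argument must rely on the $L^1$-convergence of the kernels combined with the uniform $L^\infty$-bound on $eu^\epsilon$.
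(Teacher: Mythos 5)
Your proposal is correct and reproduces the paper's proof essentially step by step: uniform $L^\infty$ estimates from Lemma~\ref{lem:irr_est} (uniform in $\epsilon$ because $\|\G^\epsilon\|_{L^1}\le\|\F\|_{L^1}$), Banach--Alaoglu for the $L^2$-weak limits, pointwise weak convergence of $u^\epsilon(t)$ from the equi-Lipschitz/equi-bounded structure, and the auxiliary function $\alpha^\epsilon$ together with Corollary~\ref{coro:irr_est} precisely to circumvent the blow-up of the $\ddot u^\epsilon$ bound in $\|\G^\epsilon(0)\|_B$ — all of which is what the paper does. One small correction: the convergence $\alpha^\epsilon(t)\to\alpha^*(t)$ in $(U_0^D)'$ must be weak, not strong; equi-H\"older control into the infinite-dimensional $(U_0^D)'$ plus boundedness of the range cannot deliver strong pointwise limits, and indeed no further subsequence extraction is even needed there, since $\alpha^\epsilon(t)\rightharpoonup\alpha^*(t)$ for every $t$ follows directly from $\alpha^\epsilon\rightharpoonup\alpha^*$ in $H^1(0,T;(U_0^D)')$ by writing $\alpha^\epsilon(t)=\alpha^\epsilon(0)+\int_0^t\dot\alpha^\epsilon(s)\,\de s$ and pairing with a fixed $v\in U_0^D$.
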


\begin{proof}
Thanks to Lemma~\ref{lem:irr_est} we deduce
\begin{equation*}
\|\dot u^{\epsilon}(t)\|_H+\| e u^{\epsilon}(t)\|_{H}\le M\quad\text{for every $t\in[0,T]$ and $\epsilon\in(0,\delta_0)$},
\end{equation*}
with a constant $M$ independent of $\epsilon$ since $\|\G^\epsilon\|_{L^1(0,T;B)}\le \|\F\|_{L^1(0,T+\delta_0;B)}$.
Hence, by Banach-Alaoglu's theorem and Lemma~\ref{lem:wc} there exists 
\begin{equation*}
 u^*\in C_w^0([0,T];U_T)\cap W^{1,\infty}(0,T;H)
\end{equation*}
and a not relabeled subsequence of $\epsilon$ such that 
\begin{align}\label{eq:ep-conv2}
& u^{\epsilon}\xrightharpoonup[\epsilon\to 0^+]{L^2(0,T;U_T)} u^*,\quad \dot u^{\epsilon}\xrightharpoonup[\epsilon\to 0^+]{L^2(0,T;H)} \dot u^*,\quad u^{\epsilon}(t)\xrightharpoonup[\epsilon\to 0^+]{U_T} u^*(t)\quad\text{for every $t\in[0,T]$}.
\end{align}
In particular, we deduce that $u^*(0)=u^0$ in $U_0$, $u^*(t)\in U_t$ and $u^*(t)-z(t)\in U_t^D$ for every $t\in[0,T]$. 

It remains to show that $\dot u^*\in C_w^0([0,T];H)$, $\dot u^*(0)=u^1$ in $H$, and that for every $t\in[0,T]$ 
$$\dot u^\epsilon(t)\xrightharpoonup[\epsilon\to 0^+]{H} \dot u^*(t).$$ 
To this aim we consider the auxiliary function defined at the end of the previous section. More precisely, for every $\epsilon\in(0,\delta_0)$ let $ \alpha ^{\epsilon}\colon [0,T]\to (U_0^D)'$ be defined as
\begin{align*}
\langle \alpha ^{\epsilon}(t),v\rangle_{(U_0^D)'}:=(\dot u^{\epsilon}(t),v)_H+\int_0^t(\G^{\epsilon}(t- r) (e u^{\epsilon}(r)-eu^0), e v)_{H}\,\de r \quad\text{for every $v\in U_0^D$ and $t\in[0,T]$}.
\end{align*}
In view of Corollary~\ref{coro:irr_est}, we have
\begin{equation*}
\| \alpha ^{\epsilon}\|_{H^1(0,T;(U_0^D)')}\le \tilde M\quad\text{for every $\epsilon\in(0,\delta_0)$},
\end{equation*}
with $\tilde M$ independent of ${\epsilon}>0$ being $\|\G^\epsilon\|_{L^1(0,T;B)}\le \|\F\|_{L^1(0,T+\delta_0;B)}$. Hence, up to extract a further subsequence, there exists $ \alpha ^*\in H^1(0,T;(U_0^D)')$ such that
\begin{align}\label{eq:epconv3}
\alpha ^{\epsilon}\xrightharpoonup[\epsilon\to 0^+]{H^1(0,T;(U_0^D)')} \alpha ^*,\qquad \alpha ^{\epsilon}(t)\xrightharpoonup[\epsilon\to 0^+]{(U_0^D)'} \alpha ^*(t)\quad\text{for every $t\in[0,T]$}.
\end{align}
In particular, since $ \alpha ^{\epsilon}(0)=u^1$ in $(U_0^D)'$ we conclude that $ \alpha^* (0)=u^1$ in $(U_0^D)'$. We claim
\begin{equation*}
\langle \alpha ^*(t),v\rangle_{(U_0^D)'}=(\dot u^*(t),v)_H+\int_0^t(\F(t- r) (e u^*(r)-eu^0), e v)_{H}\,\de r \quad\text{for every $v\in U_0^D$ and for a.e. $t\in(0,T)$}.
\end{equation*}
Indeed, for every $\varphi\in C^\infty_c(0,T;U_0^D)$ we have
\begin{align*}
\int_0^T\langle \alpha ^{\epsilon}(t),\varphi(t)\rangle_{(U_0^D)'}\,\de t&=\int_0^T(\dot u^{\epsilon}(t),\varphi(t))_H\,\de t+\int_0^T\int_0^t(\G^{\epsilon}(t- r) (e u^{\epsilon}(r)-eu^0), e \varphi(t))_{H}\,\de r \,\de t\\
&\xrightarrow[\epsilon\to 0^+]{} \int_0^T(\dot u^*(t),\varphi(t))_H\,\de t+\int_0^T\int_0^t(\F(t- r) (e u^*(r)-eu^0), e \varphi(t))_{H}\,\de r \,\de t.
\end{align*}
Notice that this convergence is true thanks to~\eqref{eq:ep-conv2} and
$$\G^{\epsilon}(t-\,\cdot\,)\xrightarrow[\epsilon\to 0^+]{L^1(0,t;B)} \F(t-\,\cdot\,),$$
which gives
\begin{align*}
 \int_0^T(\dot u^{\epsilon}(t),\varphi(t))_H\,\de t&\xrightarrow[\epsilon\to 0^+]{} \int_0^T(\dot u^*(t),\varphi(t))_H\,\de t,\\
 \int_0^t(\G^{\epsilon}(t- r) (e u^{\epsilon}(r)-eu^0),e\varphi(t))_H\,\de r &\xrightarrow[\epsilon\to 0^+]{} \int_0^t(\F(t- r) (e u^*(r)-eu^0),e\varphi(t))_H\,\de r .
\end{align*}
Hence by the dominated convergence theorem we have
\begin{align*}
\int_0^T\int_0^t(\G^{\epsilon}(t- r) (e u^{\epsilon}(r)-eu^0), e \varphi(t))_{H}\,\de r \,\de t\xrightarrow[\epsilon\to 0^+]{}\int_0^T\int_0^t(\F(t- r) (e u^*(r)-eu^0), e \varphi(t))_{H}\,\de r \,\de t.
\end{align*}
Therefore, for a.e. $t\in(0,T)$ we deduce
\begin{align*}
\langle \dot u^*(t),v\rangle_{(U_0^D)'}&=(\dot u^*(t),v)_H=\langle \alpha ^*(t),v\rangle_{(U_0^D)'}-\int_0^t(\F(t- r) (e u^*(r)-eu^0), e v)_{H}\,\de r \quad\text{for every $v\in U_0^D$}. 
\end{align*}
Notice the function on the right-hand side is well defined in $(U_0^D)'$ for every $t\in[0,T]$. Therefore, we can extend $\dot u^*$ to a function defined in the whole interval $[0,T]$ with values in $(U_0^D)'$. In particular, we deduce $\dot u^*\in C_w^0([0,T];(U_0^D)')$, arguing in a similar way as we did in the previous section for $ \alpha $, and thanks to the fact that $\dot u^*(0)= \alpha^*(0)=u^1$ in $(U_0^D)'$. Therefore, since $\dot u^*\in C_w^0([0,T];(U_0^D)')\cap L^\infty(0,T;H)$ we derive that $\dot u^*\in C_w^0([0,T];H)$ (thanks to Lemma~\ref{lem:wc}), and that $\dot u^*(0)=u^1$ in $H$. Finally, we have
\begin{equation}\label{eq:ue}
\dot u^\epsilon(t)\xrightharpoonup[\epsilon\to 0^+]{(U_0^D)'}\dot u^*(t)\quad\text{for every $t\in[0,T]$}
\end{equation}
by definition of $\dot u^*$ and by~\eqref{eq:ep-conv2} and~\eqref{eq:epconv3}. The convergence~\eqref{eq:ue} combined with 
$$\|\dot u^\epsilon(t)\|_H\le M\quad \text{for every $t\in[0,T],$}$$ give us the last convergence required.
\end{proof}

We can now prove the main existence result of Theorem~\ref{thm:irr_exis} for the fractional Kelvin-Voigt's system involving Caputo's derivative.

\begin{proof}[Proof of Theorem~\ref{thm:irr_exis}]
It is enough to show that the function $u^*$ given by Lemma~\ref{lem:ir-con} is a generalized solution to~\eqref{eq:Fsystem}. To this aim, it remains to prove that $u^*$ satisfies~\eqref{eq:Fgen}. For every $\varphi\in \Cc$ we know that the function $u^\epsilon \in\mathcal C_w$ satisfy for every $\epsilon\in(0,\delta_0)$ the following equality
\begin{align*}
&-\int_0^T(\du (t),\dot \varphi(t))_H\,\de t+\int_0^T(\C e\ue(t),e\varphi(t))_H\,\de t-\int_0^T\int_0^t(\G^\epsilon(t-r)(e \ue(r)-eu^0),e \dot \varphi(t))_{H}\,\de r\,\de t\\
&=\int_0^T(f(t),\varphi(t))_H\,\de t+\int_0^T(N(t),\varphi(t))_{H_N}\,\de t.
\end{align*}
Let us pass to the limit as $\epsilon\to 0^+$. Clearly, by~\eqref{ep-weak-conv} we have
\begin{align*}
\int_0^T(\dot u^\epsilon(t),\dot \varphi(t))_H\,\de t&\xrightarrow[\epsilon \to 0^+]{}\int_0^T(\dot u^*(t),\dot \varphi(t))_H\,\de t,\\
\int_0^T(\C e\ue(t),e\varphi(t))_H\,\de t &\xrightarrow[\epsilon \to 0^+]{}\int_0^T(\C e u^*(t),e\varphi(t))_H\,\de t.
\end{align*}
It remains to study the behaviour as $\epsilon\to 0^+$ of 
\begin{equation*}
\int_0^T\int_0^t(\G^\epsilon(t-r)(e u^\epsilon(r)-eu^0),e \dot \varphi(t))_{H}\,\de r\,\de t.
\end{equation*}
We define for every $\epsilon\in(0,\delta_0)$ the function
\begin{equation*}
v^{\epsilon}(t):=\int_0^t(\G^{\epsilon}(t- r)-\F(t- r))(e u^{\epsilon}(r)-eu^0)\,\de r \quad\text{for $t\in[0,T]$}.
\end{equation*}
By~\eqref{eq:L1Gest} for every $t\in[0,T]$ it holds
\begin{equation}\label{eps-bound-eu}
\|v^\epsilon(t)\|_H\le \|\G^\epsilon-\F\|_{L^1(0,T;B)}\|eu^\epsilon-eu^0\|_{L^\infty(0,T;H)}\le 2M\|\G^\epsilon-\F\|_{L^1(0,T;B)},
\end{equation}
with $M$ independent of $\epsilon$ being $\|\G^\epsilon\|_{L^1(0,T;B)}\le \|\F\|_{L^1(0,T+\delta_0;B)}$. Notice that
\begin{align*}
\int_0^T\int_0^t(\G^{\epsilon}(t- r)& (e u^{\epsilon}(r)-eu^0), e \dot\varphi(t))_{H}\,\de r \,\de t\\
&=\int_0^T(v^{\epsilon}(t), e \dot\varphi(t))_{H}\,\de t+\int_0^T\int_0^t(\F(t- r)(e u^{\epsilon}(r)-eu^0), e \dot\varphi(t))_{H}\,\de r \,\de t,
\end{align*}
and thanks to~\eqref{eps-bound-eu} and to the fact that $\G^\epsilon\to \F$ in $L^1(0,T;B)$ as $\epsilon\to 0^+$, we get
\begin{align*}
\left|\int_0^T(v^{\epsilon}(t), e \dot\varphi(t))_{H}\,\de t\right|&\le \int_0^T\|v^{\epsilon}(t)\|_{H}\| e \dot \varphi(t)\|_H\,\de t\le 2 M\|\G^{\epsilon}-\F\|_{L^1(0,T;B)}\| e \dot \varphi\|_{L^1(0,T;H)}\xrightarrow[\epsilon\to 0^+]{}0.
\end{align*}
On the other hand, since $ r \mapsto\int_ r ^T\F(t- r) e \dot\varphi(t)\,\de t$ belongs to $L^\infty(0,T;H)$, we can write
\begin{align*}
&\int_0^T\int_0^t(\F(t- r) (e u^{\epsilon}(r)-eu^0), e \dot\varphi(t))_{H}\,\de r \,\de t
=\int_0^T(e u^{\epsilon}(r)-eu^0,\int_ r ^T\F(t- r) e \dot\varphi(t)\,\de t)_{H}\,\de r \\
&\xrightarrow[\epsilon\to 0^+]{} \int_0^T(e u^* (r)-eu^0,\int_ r ^T\F(t- r) e \dot\varphi(t)\,\de t)_{H}\,\de r=\int_0^T\int_0^t(\F(t- r) (e u^*(r)-eu^0), e \dot\varphi(t))_{H}\,\de r \,\de t.
\end{align*}
As a consequence, $u^*$ is a generalized solution to system~\eqref{eq:Fsystem}.
\end{proof}

We conclude this section by showing that for the fractional Kelvin-Voigt model, the generalized solution $u^*\in \mathcal C_w$ to~\eqref{eq:Fsystem} found before satisfies an energy-dissipation inequality. As before, for $t\in(0,T]$ we define the functions $\mathcal{E}^*(t)$ and $\mathcal{D}^*(t)$ as
\begin{align*}
&\mathcal{E}^*(t):=\frac{1}{2}\|\dot u^*(t)\|_H^2+\frac{1}{2}(\C e u^*(t),e u^*(t))_{H}\,\de t+\frac{1}{2}(\F(t)(e u^*(t)-eu^0),e u^*(t)-eu^0)_{H}\nonumber\\
&\hspace{5.5cm}-\frac{1}{2}\int_0^t(\dot \F(t-r)(eu^*(t)-e u^*(r)),eu^*(t)-e u^*(r))_{H}\,\de r,\\
&\mathcal D^*(t):=-\frac{1}{2}\int_0^t(\dot \F(r)(e u^*(r)-eu^0),e u^*(r)-eu^0)_{H}\,\de r\nonumber\\
&\hspace{5.5cm}+\frac{1}{2}\int_0^t\int_0^r(\ddot \F(r-s)(e u^*(r)-e u^*(s)),e u^*(r)-e u^*(s))_{H}\,\de s\,\de r.
\end{align*}
Notice that the integrals in $\mathcal E^*$ and $\mathcal D^*$ are well-posed, eventually with values $\infty$. Furthermore, we define the total work $\mathcal W_{tot}^*(t)$ for $t\in[0,T]$ as
\begin{align}\label{eq:totwork2}
\mathcal W^*_{tot}(t):&=\int_0^t [(f(r),\dot u^*(r)-\dot z(r))_H-(\dot N(r),u^*(r)-z(r))_{H_N}-(\dot u^*(r),\ddot z(r))_H+(\C eu^*(t),e\dot z(t))_H]\,\de r\nonumber\\
&\quad +(N(t),u^*(t)-z(t))_{H_N}-(N(0),u^0-z(0))_{H_N}+(\dot{u}^*(t),\dot{z}(t))_H -(u^1,\dot{z}(0))_H\nonumber\\
&\quad +\int_0^t(\F(t- r) (e u^*(r)-eu^0) , e \dot z(t))_{H}\,\de r-\int_0^t\int_0^ r(\F(r -s) (e u^*(s)-eu^0), e \ddot z(r))_{H}\,\de s\,\de r.
 \end{align}
We point out the total work $\mathcal W_{tot}^*$ is continuous in $[0,T]$ and that the definition given in~\eqref{eq:totwork2} is coherent with the one of~\eqref{tot-work} thanks to identity~\eqref{eq:totwork}. 

\begin{theorem}\label{thm:irr_enin}
Assume~\eqref{eq:dataFz}--\eqref{eq:F4}. Then the generalized solution $u^*\in\mathcal C_w$ to system~\eqref{eq:Fsystem} of Theorem~\ref{thm:irr_exis} satisfies for every $t\in (0,T]$ the following energy-dissipation inequality
\begin{equation}\label{dis-in-*}
 \mathcal{E}^*(t)+\mathcal{D}^*(t)\leq \frac{1}{2}\|u^1\|^2_H+\frac{1}{2}(\C eu^0,eu^0)_H+\mathcal{W}_{tot}^*(t).
 \end{equation}
In particular, $\mathcal E^*(t)$ and $\mathcal D^*(t)$ are finite for every $t\in(0,T]$.
\end{theorem}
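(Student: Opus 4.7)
The plan is to start from the energy-dissipation inequality already established for the regularized solutions $u^\epsilon$ (Proposition~\ref{prop:reg_enin} applied with $\G=\G^\epsilon$) and pass to the limit as $\epsilon\to 0^+$ using the convergences provided by Lemma~\ref{lem:ir-con}. For each $\epsilon\in(0,\delta_0)$ and $t\in(0,T]$ we have
\begin{equation*}
\mathcal{E}^\epsilon(t)+\mathcal{D}^\epsilon(t)\le \tfrac{1}{2}\|u^1\|_H^2+\tfrac{1}{2}(\C eu^0,eu^0)_H+\mathcal{W}^\epsilon_{tot}(t),
\end{equation*}
where the superscript $\epsilon$ means that in the definitions of $\mathcal E,\mathcal D,\mathcal W_{tot}$ we replace $u$ by $u^\epsilon$ and $\F$ by $\G^\epsilon(\,\cdot\,)=\F(\,\cdot\,+\epsilon)$. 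Note that for every $t>0$ and every $r\in(0,t)$ the regularity of $\F$ in \eqref{eq:F1} gives $\G^\epsilon(t)\to\F(t)$, $\dot\G^\epsilon(t-r)\to\dot\F(t-r)$, $\ddot\G^\epsilon(t-r)\to\ddot\F(t-r)$ in $B$ as $\epsilon\to 0^+$.

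For the left-hand side I would argue term by term. The kinetic and elastic contributions are handled by lower semicontinuity with respect to the weak convergences $\dot u^\epsilon(t)\rightharpoonup \dot u^*(t)$ in $H$ and $eu^\epsilon(t)\rightharpoonup eu^*(t)$ in $H$ from~\eqref{ep-point-conv}, using that $\C$ is nonnegative by~\eqref{eq:C}. For the term $(\G^\epsilon(t)(eu^\epsilon(t)-eu^0),eu^\epsilon(t)-eu^0)_H$, I use the same device as in the proof of Proposition~\ref{prop:reg_enin}: by~\eqref{eq:F2} and weak convergence,
\begin{equation*}
(\F(t)(eu^*(t)-eu^0),eu^*(t)-eu^0)_H\le\liminf_{\epsilon\to 0^+}(\F(t)(eu^\epsilon(t)-eu^0),eu^\epsilon(t)-eu^0)_H,
\end{equation*}
and the difference $(\G^\epsilon(t)-\F(t))(eu^\epsilon(t)-eu^0)$ is controlled uniformly thanks to Lemma~\ref{lem:irr_est} and $\|\G^\epsilon(t)-\F(t)\|_B\to 0$. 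For the single integral involving $-\dot\G^\epsilon$ and the double integral involving $\ddot\G^\epsilon$, which have nonnegative integrands by~\eqref{eq:F3}--\eqref{eq:F4}, I combine the pointwise lower semicontinuity in the quadratic form with Fatou's lemma; this is the same kind of argument already used in Proposition~\ref{prop:reg_enin} to pass from the discrete scheme to the continuous energy.

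For the right-hand side, the crucial observation is that the kernel-dependent part of $\mathcal W^\epsilon_{tot}$ can be rewritten via identity~\eqref{eq:totwork} as
\begin{equation*}
\int_0^t(\G^\epsilon(t-r)(eu^\epsilon(r)-eu^0),e\dot z(t))_H\,\de r-\int_0^t\int_0^r(\G^\epsilon(r-s)(eu^\epsilon(s)-eu^0),e\ddot z(r))_H\,\de s\,\de r,
\end{equation*}
which no longer involves derivatives of the kernel. Since $\G^\epsilon\to\F$ in $L^1(0,T;B)$ and $eu^\epsilon\xstararrow{}{L^\infty(0,T;H)} eu^*$, a standard duality pairing (together with dominated convergence in the double integral, using the uniform bound of Lemma~\ref{lem:irr_est}) yields the passage to the limit towards the corresponding expression in $\mathcal W^*_{tot}(t)$ given by~\eqref{eq:totwork2}. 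All the remaining terms in $\mathcal W^\epsilon_{tot}(t)$ are linear or bilinear in $u^\epsilon,\dot u^\epsilon$ and the data, and they converge to their $u^*$ counterparts by~\eqref{ep-weak-conv}--\eqref{ep-point-conv} and the assumptions~\eqref{eq:dataFz}--\eqref{eq:data2}.

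Combining the liminf on the left with the limit on the right yields~\eqref{dis-in-*}; the finiteness of $\mathcal E^*(t)$ and $\mathcal D^*(t)$ for $t\in(0,T]$ follows since the right-hand side of~\eqref{dis-in-*} is finite under our assumptions. The main obstacle I anticipate is the careful handling of the lower semicontinuity for the single and double integrals on the left-hand side: the kernels $\dot\G^\epsilon,\ddot\G^\epsilon$ are unbounded uniformly in $\epsilon$ near the origin, so one cannot invoke strong convergence of the kernels, and one has to rely on the sign conditions~\eqref{eq:F3}--\eqref{eq:F4} to apply Fatou. Writing $-\dot\G^\epsilon(t-r)=-\dot\F(t-r+\epsilon)$ (respectively $\ddot\F(t-r+\epsilon)$) and using pointwise convergence in $B$ for each $r<t$ (respectively $s<r<t$), together with weak convergence of $eu^\epsilon(t)-eu^\epsilon(r)$ (respectively $eu^\epsilon(r)-eu^\epsilon(s)$) in $H$, makes the lower semicontinuity of each quadratic integrand work, and Fatou's lemma then delivers the required inequality for the integrals.
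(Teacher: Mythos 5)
Your proposal is correct and follows essentially the same route as the paper: start from the $\epsilon$-regularized energy-dissipation inequality of Proposition~\ref{prop:reg_enin}, rewrite the kernel part of the total work via identity~\eqref{eq:totwork} to avoid $\dot\G^\epsilon$, pass to the limit on the right using $\G^\epsilon\to\F$ in $L^1(0,T;B)$ together with the uniform $L^\infty$ bound of Lemma~\ref{lem:irr_est} and dominated convergence, and treat the left-hand side by weak lower semicontinuity of the quadratic forms (exploiting the sign conditions~\eqref{eq:F2}--\eqref{eq:F4}) and Fatou's lemma after using that $\dot\G^\epsilon(t-r)=\dot\F(t-r+\epsilon)\to\dot\F(t-r)$ pointwise for $r<t$ (and likewise for $\ddot\F$). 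The final observation you single out as the main obstacle---that one cannot use strong kernel convergence for the $\dot\F,\ddot\F$ terms and must rely on the sign conditions plus pointwise-in-$B$ convergence of the kernel off the singularity---is precisely how the paper handles it.
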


\begin{proof}
Let us fix $t\in(0,T]$. For every $\epsilon\in(0,\delta_0)$ let $\ue\in \mathcal C_w$ be the generalized solution to system~\eqref{eq:Gsystem} with $\G$ replaced by $\G^\epsilon$ given by Lemma~\ref{lem:ir-con}. Thanks to Proposition~\ref{prop:reg_enin} we know that the function $\ue$ satisfies the energy-dissipation inequality~\eqref{eq:enin} and we can rewrite the total work \eqref{tot-work} as in~\eqref{eq:totwork2} since $z\in W^{2,1}(0,T;U_0)$ (as suggested by formula~\eqref{eq:totwork}). The convergences~\eqref{ep-point-conv} of Lemma~\ref{lem:ir-con}, and the lower semicontinuous property of the maps $v\mapsto \|v\|^2_H$, $w\mapsto (\C w,w)_H$ (by~\eqref{eq:C}), and $w\mapsto (\F(t) w,w)_H$ (by~\eqref{eq:F2}), imply 
\begin{align}
 \|\dot u^*(t)\|^2_H&\leq \liminf_{\epsilon\to 0^+}\|\du(t)\|^2_H,\label{ep-cont-first}\\
 (\C eu^*(t),eu^*(t))_H&\leq \liminf_{\epsilon\to 0^+}(\C e\ue(t),e\ue(t))_H,\\
 (\F(t) (eu^*(t)-eu^0),eu^*(t)-eu^0)_H&\leq \liminf_{\epsilon\to 0^+}(\F(t) (e\ue(t)-eu^0),e\ue(t)-eu^0)_H.\label{en-lim-G}
\end{align}
Moreover, by~\eqref{eq:F1} we have
\begin{align*}
 |((\F(t)-\G^\epsilon(t))(e\ue(t)-eu^0),e\ue(t)-eu^0)_H|&\leq \|\F(t)-\G^{\epsilon}(t)\|_B\|e\ue(t)-eu^0\|_H^2\nonumber\\
 &\leq 4M^2\|\F(t)-\F(t+\epsilon)\|_B\xrightarrow[\epsilon\to 0^+]{}0,
\end{align*}
being $M$ independent of $\epsilon$. Hence~\eqref{en-lim-G} reads as
\begin{equation}
 (\F(t)(eu^*(t)-eu^0),eu^*(t)-eu^0)_H\leq \liminf_{\epsilon\to 0^+}(\G^{\epsilon}(t)(e\ue(t)-eu^0),e\ue(t)-eu^0)_H.
\end{equation}
Similarly, by~\eqref{eq:F1},~\eqref{eq:F3}, and~\eqref{ep-point-conv}, for every $r\in(0,t)$ we have
\begin{align*}
(-\dot\F(t-r)(eu^*(t)-eu^*(r))&,eu^*(t)-eu^*(r))_H\\
&\le\liminf_{\epsilon\to 0^+}(-\dot \G^{\epsilon}(t-r)(e\ue(t)-e\ue(r)),e\ue(t)-e\ue(r))_H.
\end{align*}
In particular, we can use Fatou's lemma to obtain
\begin{align*}
\int_0^t(-\dot\F(t-r)(eu^*(t)-eu^*(r))&,eu^*(t)-eu^*(r))_H\,\de r\nonumber\\
&\le\liminf_{\epsilon\to 0^+}\int_0^{t}(-\dot\F(t-r)(e\ue(t)-e\ue(r)),e\ue(t)-e\ue(r))_H\,\de r.
\end{align*}
By arguing in a similar way, we can derive 
\begin{equation*}
\int_0^t(-\dot\F(r)(eu^*(r)-eu^0),eu^*(r)-eu^0)_H\,\de r\le\liminf_{\epsilon\to 0^+}\int_0^{t}(-\dot \G^{\epsilon}(r)(e\ue(r)-eu^0),e\ue(r)-eu^0)_H\,\de r.
\end{equation*}
For the term involving $\ddot \F$, we argue as we already did for $\dot\F$ and by using two times Fatou's lemma we get
\begin{align*}
\int_0^t\int_0^r(\ddot\F(r-s)(eu^*(r)-eu^*&(s)),eu^*(r)-eu^*(s))_H\,\de s\,\de r\\
&\le\liminf_{\epsilon\to 0^+}\int_{0}^{t}\int_0^{r}(\ddot{\G}^{\epsilon}(r-s)(e\ue(r)-e\ue(s)),e\ue(r)-e\ue(s))_H\,\de s\,\de r.
\end{align*}

It remains to study the right-hand side of~\eqref{eq:enin} with the formulation of the total work as in~\eqref{eq:totwork2}. Thanks to Lemma~\ref{lem:ir-con} and the fact that $\G^\epsilon\to \F$ in $L^1(0,T;B)$ we deduce
\begin{align}
\int_0^{t}(f(r),\du(r))_H\,\de r &\xrightarrow[\epsilon\to 0^+]{} \int_0^t(f(r),\dot{u}^*(r))_H\,\de r,\\
\int_0^{t}(\C e\ue(r),e\dot{z}(r))_H\,\de r &\xrightarrow[\epsilon\to 0^+]{} \int_0^t (\C eu^*(r),e\dot z(r))_H\,\de r ,\\
\int_0^{t}(\G^{\epsilon}(t-r)(e\ue(r)-eu^0),e\dot{z}(r))_H\,\de r &\xrightarrow[\epsilon\to 0^+]{} \int_0^t (\F(t-r) (eu^*(r)-eu^0),e\dot z(r))_H\,\de r ,\\
(\du(t),\dot z(t))_H-\int_0^{t}(\du(r),\ddot z(r))_H\,\de r &\xrightarrow[\epsilon\to 0^+]{} (\dot u^*(t),\dot z(t))_H - \int_0^{t}(\dot u^*(r),{\ddot z}(r))_H\,\de r ,\\
(N(t),\ue(t))_{H_N}-\int_0^{t}(N(r),\du(r))_{H_N}\,\de r &\xrightarrow[\epsilon\to 0^+]{}(N(t),u^*(t))_{H_N}- \int_0^{t}(\dot N(r),u^*(r))_{H_N}\,\de r.
\end{align}
It remains to study the term
\begin{equation*}
\int_0^t\int_0^ r(\G^{\epsilon}(r -s) (e \ue(s)-eu^0), e \ddot z(r))_{H}\,\de s\,\de r.
\end{equation*}
For a.e. $r\in(0,t)$ we have
\begin{align*}
&\int_0^ r (\G^\epsilon(r -s) (e \ue(s)-eu^0),e\ddot z(r))_H\,\de s\xrightarrow[\epsilon\to 0^+]{}\int_0^ r (\F(r -s) (e u^*(s)-eu^0),e\ddot z(r))_H\,\de s\\
&\left| \int_0^ r (\G^\epsilon(r -s) (e \ue(s)-eu^0),e\ddot z(r))_H\,\de s\right|\le 2M\|\F\|_{L^1(0,T+\delta_0;B)}\|e\ddot z(r)\|_H\in L^1(0,t),
\end{align*}
with $M$ independent of $\epsilon$. By the dominated convergence theorem we conclude
\begin{equation}\label{ep-cont-last}
\int_0^t\int_0^ r(\G^{\epsilon}(r -s) (e \ue(s)-eu^0), e \ddot z(r))_{H}\,\de s\,\de r\xrightarrow[\epsilon\to 0^+]{}\int_0^t\int_0^ r(\F(r -s) (e u^*(s)-eu^0), e \ddot z(r))_{H}\,\de s\,\de r.
\end{equation}
By combining~\eqref{ep-cont-first}--\eqref{ep-cont-last} we deduce the energy-dissipation inequality~\eqref{dis-in-*} for every $t\in(0,T]$.
\end{proof}

\begin{remark}
Although we do not have any information about $L^1$-integrability of $\dot \F$ and $\ddot \F$ in $t=0$, for the generalized solution $u^*$ of Theorem~\ref{thm:irr_exis} we obtain that the energy terms $\mathcal E^*$ and $\mathcal D^*$ are finite. 
\end{remark}

\begin{corollary}
Assume~\eqref{eq:dataFz}--\eqref{eq:F4}. Then the generalized solution $u^*\in\mathcal C_w$ to system~\eqref{eq:Fsystem} of Theorem~\ref{thm:irr_exis} satisfies
\begin{equation}\label{lim-en-*}
 \lim_{t\to 0^+}\mathcal E^*(t)=\frac{1}{2}\|u^1\|_H^2+\frac{1}{2}(\C eu^0,eu^0)_H.
\end{equation}
In particular,~\eqref{dis-in-*} holds true also in $t=0$ and
\begin{equation*}
\lim_{t\to 0^+}\|u^*(t)-u^0\|_{U_T}=0,\quad\lim_{t\to 0^+}\|\dot u^*(t)-u^1\|_H=0.
\end{equation*}
\end{corollary}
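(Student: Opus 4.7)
The strategy mirrors Lemma~\ref{in-data}: send $t\to 0^+$ in the energy-dissipation inequality~\eqref{dis-in-*}, match the resulting $\limsup$ against a lower bound obtained from weak lower semicontinuity, and then promote norm convergence to strong convergence via the standard Hilbert-space identity. The new feature, compared to Lemma~\ref{in-data}, is that $\F$ may be singular at the origin, so the two $\F$-dependent summands of $\mathcal E^*(t)$ and both terms of $\mathcal D^*(t)$ cannot be controlled individually near $t=0$. The crucial observation that unlocks the argument is that the sign hypotheses~\eqref{eq:F2}--\eqref{eq:F4} make each of the four summands of $\mathcal E^*(t)$ non-negative and $\mathcal D^*(t)\ge 0$, so~\eqref{dis-in-*} can be exploited without any quantitative estimate on the singular terms.

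First I would check that $\mathcal W^*_{tot}$ is continuous at $t=0$ with $\mathcal W^*_{tot}(0)=0$: every time integral appearing in~\eqref{eq:totwork2} vanishes at $t=0$, and the boundary contributions at time $t$ match those at time $0$ thanks to $u^*(0)=u^0$ in $U_0$ and $\dot u^*(0)=u^1$ in $H$ (both established in Lemma~\ref{lem:ir-con}). Discarding the non-negative $\mathcal D^*(t)$ from the left-hand side of~\eqref{dis-in-*} and letting $t\to 0^+$ then gives
\[
\limsup_{t\to 0^+}\mathcal E^*(t)\le \tfrac12\|u^1\|_H^2+\tfrac12(\C eu^0,eu^0)_H.
\]
For the reverse inequality I would combine the pointwise weak convergences $\dot u^*(t)\rightharpoonup u^1$ in $H$ and $eu^*(t)\rightharpoonup eu^0$ in $H$ (encoded in $u^*\in\mathcal C_w$ together with the initial conditions) with the weak lower semicontinuity of $v\mapsto\|v\|_H^2$ and $w\mapsto(\C w,w)_H$, and with the non-negativity of the two $\F$-summands of $\mathcal E^*$, to obtain
\[
\liminf_{t\to 0^+}\mathcal E^*(t)\ge \tfrac12\|u^1\|_H^2+\tfrac12(\C eu^0,eu^0)_H,
\]
which together with the previous bound proves~\eqref{lim-en-*}.

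The same squeeze forces each of the four non-negative summands of $\mathcal E^*(t)$ to converge separately, so in particular $\|\dot u^*(t)\|_H\to\|u^1\|_H$ and $(\C eu^*(t),eu^*(t))_H\to(\C eu^0,eu^0)_H$. Since by~\eqref{eq:C} and $\C\in B$ the form $(\C\,\cdot\,,\cdot\,)_H^{1/2}$ defines an equivalent Hilbert norm on the subspace of symmetric tensor fields, the elementary identity $\|v_n-v\|^2=\|v_n\|^2-2(v_n,v)+\|v\|^2$ upgrades the two combinations of weak plus norm convergence to the strong convergences $\dot u^*(t)\to u^1$ in $H$ and $eu^*(t)\to eu^0$ in $H$. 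Because $\dot u^*\in L^\infty(0,T;H)$ one has $u^*\in W^{1,1}(0,T;H)\subset C^0([0,T];H)$, hence $u^*(t)\to u^0$ in $H$; combined with the strong convergence of the symmetric gradients this yields $u^*(t)\to u^0$ in $U_T$. Finally,~\eqref{dis-in-*} extends trivially to $t=0$ since both sides reduce to $\frac12\|u^1\|_H^2+\frac12(\C eu^0,eu^0)_H$. The only step where one might expect trouble is the upper bound on $\limsup\mathcal E^*(t)$, because of the possibly unbounded kernel, but the sign structure~\eqref{eq:F2}--\eqref{eq:F4} makes this effortless: the singular terms are dropped, not estimated.
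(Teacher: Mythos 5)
Your proposal is correct and follows essentially the same squeeze argument as the paper: drop the non-negative $\mathcal D^*(t)$ and the non-negative $\F$-summands of $\mathcal E^*(t)$, bound $\limsup_{t\to 0^+}\mathcal E^*(t)$ by the continuity of $\mathcal W^*_{tot}$ with $\mathcal W^*_{tot}(0)=0$, bound $\liminf_{t\to 0^+}\mathcal E^*(t)$ from below by weak lower semicontinuity of $\|\cdot\|_H^2$ and $(\C\,\cdot\,,\cdot\,)_H$, and then upgrade the resulting norm-plus-weak convergence to strong convergence, exactly as in Lemma~\ref{in-data}. The paper's proof is terser (it leaves the weak-to-strong upgrade and the $U_T$-convergence of $u^*(t)$ implicit), but the underlying idea and every non-trivial step coincide with yours.
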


\begin{proof}
By~\eqref{dis-in-*} for every $t\in(0,T]$ we have
\begin{equation*}
 \frac{1}{2}\|\dot u^*(t)\|_H^2+\frac{1}{2}(\C eu^0,eu^0)_H\le \mathcal E^*(t)\le \frac{1}{2}\|u^1\|_H^2+\frac{1}{2}(\C eu^0,eu^0)_H+\mathcal W_{tot}^*(t).
\end{equation*}
Since $u^*\in C_w^0([0,T];U_T)$ and $\dot u^*\in C_w^0([0,T];H)$ we get
\begin{equation*}
\frac{1}{2}\|u^1\|_H^2+\frac{1}{2}(\C eu^0,eu^0)_H\le \liminf_{t\to 0^+}\mathcal E^*(t)\le\limsup_{t\to 0^+}\mathcal E^*(t)\le \frac{1}{2}\|u^1\|_H^2+\frac{1}{2}(\C eu^0,eu^0)_H.
\end{equation*}
Therefore, we get~\eqref{lim-en-*}. As consequence of this, we derive
\begin{equation*}
 \lim_{t\to 0^+}\|\dot u^*(t)\|_H^2=\|u^1\|_H^2,\quad\lim_{t\to 0^+} (\C eu^*(t),eu^*(t))_H=(\C eu^0,eu^0)_H,
\end{equation*}
and this conclude the proof.
\end{proof}

\section{Uniqueness for a not moving crack}\label{sec:uniq}

Let us consider the case of a domain with a fixed crack, i.e. $\Gamma_T=\Gamma_0$ (possibly $\Gamma_T=\emptyset$). In this case we can show that the generalized solution to~\eqref{eq:Fsystem} is unique. Similar results can be found in literature in~\cite{CaCaVa,Opa-Su}, but they are proved for slightly different models.

The proof of the uniqueness is based on a particular energy estimate which holds for the primitive of a generalized solution. To this aim, we need to estimate
\begin{equation*}
\int_0^t\int_0^r(\F(r-s)eu(s),eu(r))_{H}\,\de s\,\de r
\end{equation*}
and we start with the following identity which is true for a regular tensor $\K$.

\begin{lemma}
Let $\K\in C^1([0,T];B)$ and $v\in L^2(0,T;U_0)$. Then, for every $t\in[0,T]$
\begin{align}\label{eq:Kidentity}
&\int_0^t(\frac{\de}{\de r}\int_0^r\K(r-s)e v(s)\,\de s ,ev(r))_{H}\,\de r=\frac{1}{2}\int_0^t(\K(t-r)e v(r),e v(r))_{H}\,\de r\nonumber\\
&\hspace{2cm}+\frac{1}{2}\int_0^t(\K(r)ev(r),ev(r))_{H}\,\de r-\frac{1}{2}\int_0^t\int_0^r(\dot \K(r-s)(ev(r)-ev(s)),ev(r)-ev(s))_{H}\,\de s\,\de r.
\end{align}
\end{lemma}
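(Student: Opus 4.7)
The plan is to use the fact that $\K\in C^1([0,T];B)$ to differentiate the convolution explicitly, and then exploit the symmetry of $\dot\K$ via a polarization identity.

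First, I would differentiate the convolution directly: since $\K$ is $C^1$,
\begin{equation*}
\frac{\de}{\de r}\int_0^r\K(r-s)ev(s)\,\de s = \K(0)ev(r) + \int_0^r \dot\K(r-s)ev(s)\,\de s,
\end{equation*}
so the left-hand side of \eqref{eq:Kidentity} splits as $\int_0^t(\K(0)ev(r),ev(r))_H\,\de r$ plus the double integral $I:=\int_0^t\int_0^r(\dot\K(r-s)ev(s),ev(r))_H\,\de s\,\de r$.

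Next, since $\dot\K(r-s)$ is a symmetric tensor, the elementary polarization identity
\begin{equation*}
(\mathbb A x,y)_H = \tfrac{1}{2}(\mathbb A x,x)_H + \tfrac{1}{2}(\mathbb A y,y)_H - \tfrac{1}{2}(\mathbb A(x-y),x-y)_H
\end{equation*}
applied to $x=ev(s)$, $y=ev(r)$, $\mathbb A=\dot\K(r-s)$ rewrites $I$ as the sum of three pieces. The third piece is exactly the last term on the right-hand side of \eqref{eq:Kidentity} (which is what we want). For the first two pieces, I would integrate $\dot\K$ explicitly: using $\int_0^r \dot\K(r-s)\,\de s = \K(r)-\K(0)$ one gets
\begin{equation*}
\tfrac12\int_0^t\int_0^r(\dot\K(r-s)ev(r),ev(r))_H\,\de s\,\de r = \tfrac12\int_0^t(\K(r)ev(r),ev(r))_H\,\de r - \tfrac12\int_0^t(\K(0)ev(r),ev(r))_H\,\de r,
\end{equation*}
and after swapping the order of integration via Fubini ($0\le s\le r\le t$), the identity $\int_s^t\dot\K(r-s)\,\de r = \K(t-s)-\K(0)$ gives
\begin{equation*}
\tfrac12\int_0^t\int_0^r(\dot\K(r-s)ev(s),ev(s))_H\,\de s\,\de r = \tfrac12\int_0^t(\K(t-r)ev(r),ev(r))_H\,\de r - \tfrac12\int_0^t(\K(0)ev(r),ev(r))_H\,\de r.
\end{equation*}

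Summing all contributions, the three $\K(0)$-terms combine with the initial $\int_0^t(\K(0)ev(r),ev(r))_H\,\de r$ with coefficients $1 - \tfrac12 - \tfrac12 = 0$ and therefore cancel, leaving exactly the right-hand side of \eqref{eq:Kidentity}. The only subtle points are the use of Fubini (entirely justified since $v\in L^2(0,T;U_0)$ and $\dot\K\in C^0([0,T];B)$, so the integrand is in $L^1$ on the triangle) and spotting the symmetric polarization identity, but neither of these presents a real obstacle under the stated regularity on $\K$.
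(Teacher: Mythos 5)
Your proof is correct and is essentially the paper's argument run in the opposite direction: you expand the left-hand side and apply the symmetric polarization identity to the double integral with $\dot\K$, while the paper expands the last term of the right-hand side with the same identity and shows the pieces recombine into the left-hand side. The only other (cosmetic) difference is that you handle $\tfrac12\int_0^t\int_0^r(\dot\K(r-s)ev(s),ev(s))_H\,\de s\,\de r$ by Fubini on the triangle, whereas the paper rewrites it via $\tfrac{\de}{\de r}\int_0^r(\K(r-s)ev(s),ev(s))_H\,\de s$ and the fundamental theorem of calculus; both are equivalent and equally justified under the stated regularity.
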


\begin{proof}
Let us fix $t\in [0,T]$ and let us analyze the right hand-side of~\eqref{eq:Kidentity}. We have
\begin{align}\label{K-1}
-\frac{1}{2}\int_0^t\int_0^r(\dot \K(r&-s)(ev(r)-ev(s)),ev(r)-ev(s))_{H}\,\de s\,\de r=\int_0^t\int_0^r(\dot \K(r-s)ev(s),ev(r))_{H}\,\de s\,\de r\nonumber\\
&-\frac{1}{2}\int_0^t\int_0^r(\dot \K(r-s)ev(s),ev(s))_{H}\,\de s\,\de r-\frac{1}{2}\int_0^t\int_0^r(\dot \K(r-s)ev(r),ev(r))_{H}\,\de s\,\de r.
\end{align}
Notice that
\begin{align}
-\frac{1}{2}\int_0^t\int_0^r(\dot \K(r-s)ev(r),ev(r))_{H}\,\de s\,\de r&=-\frac{1}{2}\int_0^t(\left(\int_0^r \dot \K(r-s)\de s \right)ev(r),ev(r))_{H}\,\de s\,\de r\nonumber\\
&=-\frac{1}{2}\int_0^t( \K(r)ev(r),ev(r))_{H}\,\de r+\frac{1}{2}\int_0^t( \K(0)ev(r),ev(r))_{H}\,\de r,
\end{align}
and that for a.e. $r\in (0,t)$
\begin{equation*}
\frac{\de}{\de r}\int_0^r(\K (r-s)ev(s),ev(s))_H\,\de s=( \K(0)ev(r),ev(r))_{H}+\int_0^r( \dot \K(r-s)ev(s),ev(s))_{H}\,\de s,
\end{equation*}
from which we deduce
\begin{align}\label{K-3}
-\frac{1}{2}\int_0^t(\K (t-r)ev(r),ev(r))_H\,\de r&=-\frac{1}{2}\int_0^t \frac{\de}{\de r}\int_0^r(\K (r-s)ev(s),ev(s))_H\,\de s\,\de r\nonumber\\
&= -\frac{1}{2}\int_0^t( \K(0)ev(r),ev(r))_{H}\,\de r -\frac{1}{2}\int_0^t\int_0^r( \dot \K(r-s)ev(s),ev(s))_{H}\,\de s\,\de r.
\end{align}
By~\eqref{K-1}--\eqref{K-3} we can say
\begin{align*}
-\frac{1}{2}\int_0^t\int_0^r(\dot \K(r-s)(ev(r)&-ev(s)),ev(r)-ev(s))_{H}\,\de s\,\de r\nonumber\\
&=\int_0^t\int_0^r(\dot \K(r-s)ev(s),ev(r))_{H}\,\de s\,\de r+\int_0^t( \K(0)ev(r),ev(r))_{H}\,\de r\nonumber\\
&-\frac{1}{2}\int_0^t( \K(r)ev(r),ev(r))_{H}\,\de r-\frac{1}{2}\int_0^t(\K (t-r)ev(r),ev(r))_H\,\de r,
\end{align*}
and thanks to the following relation
\begin{equation*}
\frac{\de}{\de r}\int_0^r\K(r-s)e v(s)\,\de s =\K(0)ev(r)+\int_0^r\dot \K(r-s)ev(s)\de s\quad\text{for a.e. $r\in(0,t)$},
\end{equation*}
we can conclude the proof.
\end{proof}

\begin{lemma}
Let $\F$ be satisfying~\eqref{eq:F1}--\eqref{eq:F4} and $u\in C_w^0([0,T];U_0)$. Then for every $t\in[0,T]$ it holds
\begin{equation}\label{eq:Glowerest}
\int_0^t\int_0^r(\F(r-s)eu(s),eu(r))_{H}\,\de s\,\de r\ge 0.
\end{equation}
\end{lemma}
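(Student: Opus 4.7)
The plan is to regularize $\F$ in time and then perform two integrations by parts, after which polarization will rewrite the resulting integral as a sum of manifestly non-negative pieces.

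First I would regularize by setting $\F_\epsilon(\tau):=\F(\tau+\epsilon)$ for $\epsilon\in(0,\delta_0)$; by \eqref{eq:F1}--\eqref{eq:F4}, $\F_\epsilon\in C^2([0,T];B)$ and still satisfies $\F_\epsilon\ge 0$, $\dot\F_\epsilon\le 0$, $\ddot\F_\epsilon\ge 0$. Writing $I:=\int_0^t\int_0^r(\F(r-s)eu(s),eu(r))_H\,\de s\,\de r$ and defining $I_\epsilon$ analogously with $\F_\epsilon$ in place of $\F$, the elementary bound $|I_\epsilon-I|\le\|\F_\epsilon-\F\|_{L^1(0,T;B)}\|eu\|_{L^\infty(0,T;H)}^2$, combined with $\F_\epsilon\to\F$ in $L^1$ (using $\F\in L^1(0,T+\delta_0;B)$) and $eu\in L^\infty(0,T;H)$ (since $u\in C^0_w([0,T];U_0)$), shows $I_\epsilon\to I$. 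So it suffices to prove $I_\epsilon\ge 0$.

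Next I would introduce the primitive $W(r):=\int_0^r eu(\sigma)\,\de\sigma\in W^{1,\infty}(0,T;H)$, which satisfies $W(0)=0$ and $\dot W=eu$. Integrating by parts in the inner variable $s$ (using $eu(s)=\dot W(s)$) and then in the outer variable $r$ (using $eu(r)=\dot W(r)$), I obtain
\begin{align*}
I_\epsilon&=\tfrac{1}{2}(\F_\epsilon(0)W(t),W(t))_H+\int_0^t(\dot\F_\epsilon(t-s)W(s),W(t))_H\,\de s\\
&\quad-\int_0^t(\dot\F_\epsilon(0)W(r),W(r))_H\,\de r-\int_0^t\int_0^r(\ddot\F_\epsilon(r-s)W(s),W(r))_H\,\de s\,\de r.
\end{align*}

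The key step is polarization: using $(\mathbb{A}a,b)_H=\tfrac12(\mathbb{A}a,a)_H+\tfrac12(\mathbb{A}b,b)_H-\tfrac12(\mathbb{A}(a-b),a-b)_H$ for symmetric $\mathbb{A}$, I apply this to both cross terms, once with $\mathbb{A}=\dot\F_\epsilon(t-s)$, $a=W(s)$, $b=W(t)$, and once with $\mathbb{A}=\ddot\F_\epsilon(r-s)$, $a=W(s)$, $b=W(r)$, combined with the elementary identities
\[\int_0^t\dot\F_\epsilon(t-s)\,\de s=\F_\epsilon(t)-\F_\epsilon(0),\quad\int_0^r\ddot\F_\epsilon(r-s)\,\de s=\dot\F_\epsilon(r)-\dot\F_\epsilon(0),\quad\int_s^t\ddot\F_\epsilon(r-s)\,\de r=\dot\F_\epsilon(t-s)-\dot\F_\epsilon(0).\]
A careful algebraic check shows that the boundary contributions at $\F_\epsilon(0)$, the diagonal ones at $\dot\F_\epsilon(0)$, and the pieces $\int_0^t(\dot\F_\epsilon(t-s)W(s),W(s))_H\,\de s$ all cancel, leaving
\begin{align*}
I_\epsilon&=\tfrac12(\F_\epsilon(t)W(t),W(t))_H-\tfrac12\int_0^t(\dot\F_\epsilon(t-s)(W(s)-W(t)),W(s)-W(t))_H\,\de s\\
&\quad-\tfrac12\int_0^t(\dot\F_\epsilon(r)W(r),W(r))_H\,\de r+\tfrac12\int_0^t\int_s^t(\ddot\F_\epsilon(r-s)(W(s)-W(r)),W(s)-W(r))_H\,\de r\,\de s.
\end{align*}
Each of the four summands is non-negative by \eqref{eq:F2}--\eqref{eq:F4}: the first from $\F_\epsilon(t)\ge 0$, the second and third from $\dot\F_\epsilon\le 0$ together with the $-\tfrac12$ prefactor, the last from $\ddot\F_\epsilon\ge 0$. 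Hence $I_\epsilon\ge 0$, and sending $\epsilon\to 0^+$ yields \eqref{eq:Glowerest}.

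The main obstacle I anticipate is the algebraic bookkeeping in the polarization step: one must verify that all the intermediate pieces cancel exactly, so that only the four manifestly non-negative contributions survive. The regularization argument and the final limit are routine thanks to the $L^1$ integrability of $\F$ and the $L^\infty$ bound on $eu$.
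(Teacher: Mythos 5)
Your proof is correct. The overall strategy is the same as the paper's: regularize by $\F_\epsilon(\cdot)=\F(\cdot+\epsilon)$, introduce a primitive of $eu$, integrate by parts twice, and exploit the sign conditions \eqref{eq:F2}--\eqref{eq:F4} on $\F$, $\dot\F$, $\ddot\F$ to exhibit nonnegativity. However, your implementation differs in two ways worth noting. First, you anchor the primitive at $r=0$ (taking $W(r)=\int_0^r eu$, so $W(0)=0$), whereas the paper anchors it at $r=t$ (taking $v(r)=-\int_r^t u$, so $ev(t)=0$, and indeed $ev=W-W(t)$); this changes which boundary term survives but not the substance. Second, and more significantly, you carry out the ``completion of the square'' directly by polarization, obtaining an \emph{exact} decomposition of $I_\epsilon$ into four manifestly nonnegative pieces. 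The paper instead invokes its earlier auxiliary identity \eqref{eq:Kidentity} (applied with $\K=\dot\G^\epsilon$) and then still needs one extra inequality step — a polarization estimate of the mixed term $\int_0^t(\dot\G^\epsilon(r)ev(0),ev(r))_H\,\de r-\tfrac12\int_0^t(\dot\G^\epsilon(r)ev(r),ev(r))_H\,\de r$ — before reaching nonnegativity, so information is discarded along the way. Your route is self-contained (no appeal to \eqref{eq:Kidentity}) and arguably cleaner. Two small points: your bound on $|I_\epsilon-I|$ is missing a factor of $T$ (one has $|I_\epsilon-I|\le T\|\F_\epsilon-\F\|_{L^1(0,T;B)}\|eu\|_{L^\infty(0,T;H)}^2$), which is harmless; and it would be worth stating explicitly that the outer integration by parts is licit because $r\mapsto\int_0^r\dot\F_\epsilon(r-s)W(s)\,\de s$ is $C^1$ on $[0,t]$ when $\F_\epsilon\in C^2([0,T];B)$ and $W$ is Lipschitz into $H$. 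Neither affects the validity of the argument.
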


\begin{proof}
First, we fix $\epsilon\in (0,\delta_0)$ and we consider for every $t\in[0,T]$ the following regularized kernel
\begin{equation*}
\G^\epsilon(t):=\F(t+\epsilon).
\end{equation*}
Moreover, we fix $t\in[0,T]$ and we define for every $r\in[0,t]$ a primitive of $u$ in the following way
\begin{equation*}
v(r):=-\int_r^tu(s)\,\de s.
\end{equation*}
Clearly $\G^\epsilon\in C^2([0,T];B)$ and after an integration by parts, since $ev(t)=0$, we obtain
\begin{align*}
\int_0^t\int_0^r(\G^\epsilon(r-s)eu(s),eu(r))_{H}\,\de s\,\de r&=\int_0^t\int_0^r(\G^\epsilon(r-s)eu(s),e\dot v(r))_{H}\,\de s\,\de r\\
&=-\int_0^t(\G^\epsilon(0)e\dot v(r),ev(r))_{H}\,\de r-\int_0^t\int_0^r(\dot \G^\epsilon(r-s)eu(s),ev(r))_{H}\,\de s\,\de r\\
&=\frac{1}{2}(\G^\epsilon(0)e v(0),ev(0))_{H}-\int_0^t\int_0^r(\dot \G^\epsilon(r-s)eu(s),ev(r))_{H}\,\de s\,\de r.
\end{align*}
Moreover, we have
\begin{equation*}
\int_0^r\dot \G^\epsilon(r-s)eu(s)\,\de s=\frac{\de}{\de r}\int_0^r\dot\G^\epsilon(r-s)ev(s)\,\de s-\dot \G^\epsilon(r)ev(0).
\end{equation*}
Therefore, by~\eqref{eq:Kidentity} we can write
\begin{align*}
\int_0^t\int_0^r(\dot \G^\epsilon(r-s)eu(s),ev(r))_{H}\,\de s\,\de r&=\int_0^t(\frac{\de}{\de r}\int_0^r\dot\G^\epsilon(r-s)ev(s)\,\de s-\dot \G^\epsilon(r)ev(0),ev(r))_{H}\,\de r\\
&=\frac{1}{2}\int_0^t(\dot \G^\epsilon(t-r)e v(r),e v(r))_{H}\,\de r+\frac{1}{2}\int_0^t(\dot \G^\epsilon(r)ev(r),ev(r))_{H}\,\de r\\
&\quad-\frac{1}{2}\int_0^t\int_0^r\ddot \G^\epsilon(r-s)(ev(r)-ev(s)),ev(r)-ev(s))_{H}\,\de s\,\de r\\
&\quad-\int_0^t(\dot \G^\epsilon(r)ev(0),ev(r))_{H}\,\de r,
\end{align*}
which implies 
\begin{align*}
\int_0^t\int_0^r(\G^\epsilon(r-s)eu(s),eu(r))_{H}\,\de s\,\de r&=\frac{1}{2}(\G^\epsilon(0)e v(0),ev(0))_{H}+\int_0^t(\dot \G^\epsilon(r)ev(0),ev(r))_{H}\,\de r\\
&\quad-\frac{1}{2}\int_0^t(\dot \G^\epsilon(t-r)e v(r),e v(r))_{H}\,\de r-\frac{1}{2}\int_0^t(\dot \G^\epsilon(r)ev(r),ev(r))_{H}\,\de r\\
&\quad+\frac{1}{2}\int_0^t\int_0^r(\ddot \G^\epsilon(r-s)(ev(r)-ev(s)),ev(r)-ev(s))_{H}\,\de s\,\de r\\
&\ge \frac{1}{2}(\G^\epsilon(0)e v(0),ev(0))_{H}+\frac{1}{2}\int_0^t(\dot \G^\epsilon(r)ev(0),ev(0))_{H}\,\de r\\
&\quad-\frac{1}{2}\int_0^t(\dot \G^\epsilon(t-r)e v(r),e v(r))_{H}\,\de r\\
&\quad+\frac{1}{2}\int_0^t\int_0^r(\ddot \G^\epsilon(r-s)(ev(r)-ev(s)),ev(r)-ev(s))_{H}\,\de s\,\de r\\
&=\frac{1}{2}(\G^\epsilon(t)e v(0),ev(0))_{H}-\frac{1}{2}\int_0^t(\dot \G^\epsilon(t-r)e v(r),e v(r))_{H}\,\de r\\
&\quad+\frac{1}{2}\int_0^t\int_0^r\ddot \G^\epsilon(r-s)(ev(r)-ev(s)),ev(r)-ev(s))_{H}\,\de s\,\de r\ge 0.
\end{align*}
By sending $\epsilon\to 0^+$ we conclude.
\end{proof}

We can now state our uniqueness result.

\begin{theorem}
Assume~\eqref{eq:dataFz}--\eqref{eq:F4} and $\Gamma_T=\Gamma_0$. Then there exists at most one generalized solution to system~\eqref{eq:Fsystem}.
\end{theorem}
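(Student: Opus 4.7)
The plan is a Ladyzhenskaya-type energy argument for the difference of two solutions, exploiting the positivity~\eqref{eq:Glowerest}. Let $u_1, u_2$ be two generalized solutions to~\eqref{eq:Fsystem} and set $w := u_1 - u_2$. Because $\Gamma_T = \Gamma_0$ forces $U_t = U_0$ and $U_t^D = U_0^D$ for every $t \in [0,T]$, $w$ belongs to $\mathcal{C}_w$ with $w(t) \in U_0^D$ for every $t$, $w(0) = 0$ in $U_0$, and $\dot w(0) = 0$ in $H$. Subtracting the two weak formulations~\eqref{eq:Fgen} gives, for every $\varphi \in \mathcal{C}^1_c$,
\begin{equation*}
-\int_0^T (\dot w, \dot \varphi)_H \, \de t + \int_0^T (\C ew, e\varphi)_H \, \de t - \int_0^T \int_0^t (\F(t-r) ew(r), e\dot\varphi(t))_H \, \de r \, \de t = 0.
\end{equation*}

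The first step is to recast this distributional identity into a first-order pointwise one. I would define the auxiliary map
\begin{equation*}
\langle \alpha_w(t), v \rangle_{(U_0^D)'} := (\dot w(t), v)_H + \int_0^t (\F(t-r) ew(r), ev)_H \, \de r, \quad v \in U_0^D,\ t \in [0,T].
\end{equation*}
Testing the above with $\varphi(t) = \psi(t) v$ for $\psi \in C_c^\infty(0,T)$ and $v \in U_0^D$ (admissible since $U_t^D \equiv U_0^D$), and arguing as in Corollary~\ref{coro:irr_est}, one obtains $\alpha_w \in H^1(0,T;(U_0^D)')$ with $\dot \alpha_w(t) = -\C ew(t)$ in $(U_0^D)'$. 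Since $\alpha_w \in C^0([0,T];(U_0^D)')$ and $\alpha_w(0) = 0$, integration in time yields, for every $t \in [0,T]$ and every $v \in U_0^D$,
\begin{equation*}
(\dot w(t), v)_H + \int_0^t (\F(t-r) ew(r), ev)_H \, \de r = -\int_0^t (\C ew(r), ev)_H \, \de r.
\end{equation*}

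The second step is to choose $v = w(t)$, legitimate since $w(t) \in U_0^D$, and integrate from $0$ to any $\tau \in (0,T]$. Because $\dot w \in L^\infty(0,T;H)$ and $w(0) = 0$, we have $w \in H^1(0,T;H)$ and $\int_0^\tau (\dot w, w)_H \, \de t = \frac{1}{2} \norm{w(\tau)}_H^2$. Setting $V(t) := \int_0^t w(s) \, \de s$, so that $eV(t) = \int_0^t ew(s) \, \de s$ and $e\dot V(t) = ew(t)$, Fubini and the symmetry of $\C$ give $\int_0^\tau \int_0^t (\C ew(r), ew(t))_H \, \de r \, \de t = \frac{1}{2}(\C eV(\tau), eV(\tau))_H$. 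The identity then reads
\begin{equation*}
\frac{1}{2} \norm{w(\tau)}_H^2 + \int_0^\tau \int_0^t (\F(t-r) ew(r), ew(t))_H \, \de r \, \de t + \frac{1}{2} (\C eV(\tau), eV(\tau))_H = 0.
\end{equation*}
The memory term is non-negative by~\eqref{eq:Glowerest} applied to $w$ (whose regularity $w \in C_w^0([0,T];U_0)$ is exactly the hypothesis of that lemma), and the elastic term is non-negative by~\eqref{eq:C}. Hence all three summands vanish, forcing $w(\tau) = 0$ for every $\tau$, i.e., $u_1 = u_2$.

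The step I expect to demand the most care is the rigorous construction of $\alpha_w$: verifying its weak continuity into $(U_0^D)'$ and showing that it is the primitive of $-\C ew$ starting from $\alpha_w(0) = 0$. This mirrors Corollary~\ref{coro:irr_est} transferred to the difference $w$; the singularity of $\F$ at $0$ is harmless because $ew \in L^\infty(0,T;H)$ and $\F \in L^1(0,T+\delta_0;B)$ render the memory integrals absolutely summable. Once this is in place, the positivity~\eqref{eq:Glowerest}, engineered precisely to bypass any differentiability of $\F$ at $t = 0$, closes the argument.
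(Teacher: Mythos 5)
Your proof is correct and follows essentially the same route as the paper: the paper defines the auxiliary function $\beta(r) = \dot u(r) + \int_0^r \C eu(s)\,\de s + \int_0^r \F(r-s) eu(s)\,\de s$ (as an element of $(U_0^D)'$), shows it vanishes identically, tests with $u(r)$, and invokes~\eqref{eq:Glowerest}, which is exactly what you do up to the cosmetic choice of leaving the elastic convolution outside your $\alpha_w$ and producing it through $\dot\alpha_w = -\C ew$. The algebra, the reliance on $\Gamma_T=\Gamma_0$ to make $U_t^D$ constant, and the role of~\eqref{eq:Glowerest} all match the paper's argument.
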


\begin{proof}
Let $u_1,u_2\in \mathcal C_w$ be two generalized solutions to~\eqref{eq:Fsystem}. Then $u:=u_1-u_2$ satisfies equality~\eqref{eq:Fgen} with $z=N=f=u^0=u^1=0$.
Consider the function $\beta\colon [0,T]\to (U_0^D)'$ defined for every $r\in[0,T]$ as
\begin{equation*}
\langle \beta(r),v\rangle_{(U^D_0)'}:=(\dot u(r),v)_H+\int_0^r(\mathbb Ce u(s),e v)_{H}\,\de s+\int_0^r(\F(r-s)e u(s),e v)_{H}\,\de s
\end{equation*}
for every $v\in U^D_0$. Clearly $\beta\in C_w^0([0,T];(U^D_0)')$, $\beta(0)=0$ since $\dot u(0)=0$ in $(U^D_0)'$, and by~\eqref{eq:Fgen} we derive
\begin{equation*}
\int_0^T\langle \beta(r),v\rangle_{(U^D_0)'}\dot\psi(r)\,\de r=0\quad\text{for every $v\in U^D_0$ and $\psi\in C_c^1(0,T)$}. 
\end{equation*}
Therefore $\beta$ is constant in $[0,T]$, which gives $\beta(t)=0$ in $(U^D_0)'$ for every $t\in[0,T]$, namely
\begin{equation*}
(\dot u(r),v)_H+\int_0^r(\mathbb Ce u(s),e v)_{H}\,\de s+\int_0^r(\F(r-s)e u(s),e v)_{H}\,\de s=0\quad\text{for every $v\in U^D_0$ and $r\in[0,T]$}.
\end{equation*}
In particular, for every $t\in[0,T]$ we deduce
\begin{equation*}
\int_0^t(\dot u(r),u(r))_H\,\de r+\int_0^t\int_0^r(\mathbb Ce u(s),e u(r))_{H}\,\de s\,\de r+\int_0^t\int_0^r(\F(r-s)e u(s),e u(r))_{H}\,\de s\,\de r=0.
\end{equation*}
Hence, by~\eqref{eq:Glowerest} we conclude
\begin{equation*}
\frac{1}{2}\|u(t)\|_H^2+\frac{1}{2}( \C\left(\int_0^t eu(r)\,\de r\right),\int_0^t eu(r)\,\de r)_H\le 0\quad\text{for every $t\in[0,T]$}.
\end{equation*}
Therefore, since both terms are non-negative, we get that $u(t)=0$ for every $t\in[0,T]$.
\end{proof}

\begin{acknowledgements}
The authors wish to thank Professors Gianni Dal Maso for the many useful discussions on the topic. 
The authors are members of the {\em Gruppo Nazionale per l'Analisi Ma\-te\-ma\-ti\-ca, la Probabilit\`a e le loro Applicazioni} (GNAMPA) of the {\em Istituto Nazionale di Alta Matematica} (INdAM).
\end{acknowledgements}

\begin{declaration}
None.
\end{declaration}



\begin{thebibliography}{99}

\bibitem{At-Opa}{\sc T.M. Atanackovic, M. Janev, L. Oparnica, S. Pilipovic, and D. Zorica}: Space-time fractional Zener wave equation. {\it Proc. A.} {\bf 471} (2015), 25 pp.

\bibitem{C2}{\sc M.~Caponi}: Linear hyperbolic systems in domains with growing cracks. {\it Milan J. Math.} {\bf 85} (2017), 149--185.

\bibitem{Sap-ca} {\sc M. Caponi and F. Sapio}: A dynamic model for viscoelastic materials with prescribed growing cracks. {\it Ann. Mat. Pura Appl.}, {\bf 199} (2020), 1263--1292.

\bibitem{Car-Co}{\sc A. Carbotti and G. Comi}: A note on Riemann-Liouville fractional Sobolev spaces. To appear on {\it  Commun. Pure Appl. Anal} (2020). Preprint arXiv:2003.09515.

\bibitem{CaCaVa}{\sc S. Carillo, V. Valente, G. Vergara Caffarelli}: A linear viscoelasticity problem with a singular memory kernel: an existence and uniqueness result. {\it Differential Integral Equations} {\bf 26} (2013), 1115--1125. 

\bibitem{DM-Lar}{\sc G.~Dal Maso and C.J.~Larsen}: Existence for wave equations on domains with arbitrary growing cracks. {\it Atti Accad. Naz. Lincei Rend. Lincei Mat. Appl.} {\bf 22} (2011), 387--408.

\bibitem{DM-Luc}{\sc G.~Dal Maso and I.~Lucardesi}: The wave equation on domains with cracks growing on a prescribed path: existence, uniqueness, and continuous dependence on the data. {\it Appl. Math. Res. Express. AMRX} (2017), 184--241.

\bibitem{DL}{\sc R. Dautray and J.L. Lions}: Analyse math\'ematique et calcul num\'erique pour les sciences et les techniques. Vol. 8. \'Evolution: semi-groupe, variationnel. Masson, Paris, 1988.

\bibitem{Val-DiP-Ve}{\sc S. Dipierro, E. Valdinoci, and V. Vespri}: Decay estimates for evolutionary equations with fractional time-diffusion. {\it J. Evol. Equ.} {\bf 19}, (2019) 435--462.

\bibitem{Dra}{\sc C.S. Drapaca and S. Sivaloganathan}:
A fractional model of continuum mechanics.
{\it J. Elasticity} {\bf 107} (2012), 105–123.

\bibitem{Fra}{\sc M. Fabrizio}: Fractional rheological models for thermomechanical systems. Dissipation and free energies. {\it Fract. Calc. Appl. Anal.} {\bf 17} (2014), 206--223.


\bibitem{Kilbas}{\sc A.A. Kilbas, H.M. Srivastava, and J.J. Trujillo}: Theory and applications of fractional differential equations. North-Holland Mathematics Studies, Amsterdam, 2006.

\bibitem{Opa}{\sc S. Konjik, L. Oparnica, and D. Zorica}: Waves in fractional Zener type viscoelastic media. {\it J. Math. Anal. Appl.} {\bf 365}, (2010) 259--268.

\bibitem{Li-Liu}{\sc L. Li and J.G. Liu}: A generalized definition of Caputo derivatives and its application to fractional ODEs. {\it SIAM J. Math. Anal.} {\bf 50}, (2018) 2867--2900.

\bibitem{Mai}{\sc F. Mainardi}: Fractional calculus and waves in linear viscoelasticity. Imperial College Press, London, 2010.

\bibitem{NS}{\sc S. Nicaise and A.M. S\"andig}: Dynamic crack propagation in a 2D elastic body: the out--of--plane case. {\it J. Math. Anal. Appl.} {\bf 329} (2007), 1--30.

\bibitem{OSY}{\sc O.A. Oleinik, A.S. Shamaev, and G.A. Yosifian}: Mathematical problems in elasticity and homogenization. North-Holland Publishing Co., Amsterdam, 1992.

\bibitem{Opa-Su}{\sc L. Oparnica and E. Süli}: Well-posedness of the fractional Zener wave equation for heterogeneous viscoelastic materials. {\it Fract. Calc. Appl. Anal.} {\bf 23}, (2020) 126--166.

\bibitem{Po}{\sc I. Podlubny} Fractional Differential Equations.  Academic Press, San Diego, 1999.

\bibitem{SaKiMa} {\sc S.G. Samko, A.A. Kilbas, and O.I. Marichev}: Fractional integrals and derivatives. Theory and applications. Edited and with a foreword by S. M. Nikol'ski\u\i. Translated from the 1987 Russian original. Revised by the authors. Gordon and Breach Science Publishers, Yverdon, 1993. 

\bibitem{Sap} {\sc F. Sapio}: A dynamic model for viscoelasticity in domains with time-dependent cracks. Submitted for publication (2020). Preprint SISSA 14/2020/MATE.

\bibitem{Sch-Met} {\sc H. Schiessel, R. Metzler, A. Blumen, and T.F. Nonnenmacher}: Generalized viscoelastic models: their fractional equations with solutions. {\it J. Phys. A: Math. Gen.} {\bf 28} (1995), 6567--6584.

\bibitem{T1}{\sc E. Tasso}: Weak formulation of elastodynamics in domains with growing cracks. {\it Ann. Mat. Pura Appl.}, {\bf 199} (2020), 1571--1595.

\bibitem{Zhu}{\sc H.H. Zhu, L.C. Liu, H.F. Pei, and B. Shi}: Settlement analysis of viscoelastic foundation under vertical line load using a fractional Kelvin-Voigt model. {\it Geomech. Eng.}, {\bf 4} (2012), 67--68.

\end{thebibliography}
\end{document}